\newtheorem{lemma}{Lemma}
\newtheorem{theorem}{Theorem}
\newtheorem{proposition}{Proposition}
\newtheorem{definition}{Definition}
\newtheorem{corollary}{Corollary}
\newtheorem{remark}{Remark}
\newtheorem{example}{Example}
\def\f{\frac}
\def\dst{\displaystyle}
\def\eps{\varepsilon}
\def\p{\partial}
\def\a{\alpha}
\def\R{\mathbb{R}}
\def\C{\mathbb{C}}
\def\M{\mathcal{M}}
\def\N{\mathbb{N}}
\def\supp{{\mbox{\small\rm  supp}}\,}
\date{}
\definecolor{mygreen}{rgb}{0.2235,0.6353,0.2588}
\definecolor{myorange}{rgb}{0.9568,0.4941,0.1961}
\definecolor{myred}{rgb}{0.9098,0.1294,0.2078}
\definecolor{myblue}{rgb}{0.0352,0.4981,0.6509}
\definecolor{myrose}{rgb}{0.9098,0.1294,0.7078}
\newcommand{\comMD}[1]{\textcolor{black}{#1}}
\newcommand{\Mig}[1]{{\textcolor{black}{#1}}}
\newcommand{\Mag}[1]{\textcolor{black}{#1}}
\newcommand{\MT}[1]{\textcolor{black}{#1}}
\newcommand{\inserted}[1]{{\color{black}#1}}
\definecolor{vert}{rgb}{0,0,0}
\title{An inverse problem: recovering the fragmentation kernel from the short-time 
behaviour of the fragmentation equation}
\date{}
\author{Marie Doumic
 \thanks{
Sorbonne Université, Inria, CNRS, Université de Paris, Laboratoire Jacques-Louis Lions, 4 place Jussieu, 75005 Paris, France. Email: marie.doumic@inria.fr} 
\and Miguel Escobedo 
\thanks{
Departamento de Matemáticas
Universidad del País Vasco (UPV/EHU)
Apartado 644, Bilbao 48080
Spain. Email: miguel.escobedo@ehu.es}
\and Magali Tournus 
\thanks{Aix Marseille Univ, CNRS, I2M, Marseille, Centrale Marseille, France.
 Email: magali.tournus@math.cnrs.fr}
}
\begin{document}

\maketitle
\begin{abstract}
\textcolor{black}{
{\color{vert}{G}}iven a phenomenon described by a {\color{vert} self-similar} fragmentation equation, how to 
{\color{vert}infer} the fragmentation kernel from experimental measurements {\color{vert}of the solution ? To answer this question at the basis of our work, a formal asymptotic \MT{expansion} suggested us that using short-time observations and initial data close to a Dirac measure should be a well-adapted strategy}. {\color{vert} As a necessary preliminary step, we study the direct problem, {\it i.e.} we prove} existence, uniqueness and stability with respect to the initial data of non negative  measure-valued solutions when 
the initial data 
{\color{vert} is a} compactly supported, bounded, non negative measure. A representation of the solution as a power series in the space of Radon measures 
is also shown. This representation is used to 
{\color{vert} propose a reconstruction formula for }
the fragmentation kernel, using short-time experimental measurements when the initial data is close to a Dirac measure.
{\color{vert} We prove error estimates}
in Total Variation and Bounded Lipshitz norms; 
{\color{vert} this gives a quantitative meaning to what a "short" time observation is}. For general initial data in the space of compactly supported measures, {\color{vert} we provide estimates on} how the short-time measurements approximate the convolution of the fragmentation kernel with a suitably-scaled version of the initial data. The series representation also yields a reconstruction formula for the Mellin transform of the fragmentation kernel $\kappa$  and an error estimate for such an approximation. 
{\color{vert} Our analysis is complemented by a numerical investigation.}
}
\end{abstract}

\section{Introduction}
\label{sec:intro}
The fragmentation equation is a size-structured PDE describing the evolution of 
a population of particles. It is ubiquitous in modelling 
physical or biological phenomena (cell division \cite{Perthame2007}, amyloid fibril breakage \cite{XR13}, microtubules dynamics \cite{HHTW18}) and technological processes (mineral processing, grinding solids \cite{Kolmogorov40}\comMD{,}
 polymer degradation \cite{Montroll_Simha40} and break-up of liquid droplets or air bubbles). 
 \textcolor{black}{As presented in \cite{Melzak}, the equation may be written as follows
 \begin{equation}\label{eq:frag_intro_gen}
\dfrac{\p}{\p t}u(t,x) =- \frac {u(t, x)} {x}\int _0^x y F(x, y)dy +  \dst\int_{x}^{\infty} u(t,y)F(y, x) dy ,
 \end{equation}
{\color{vert} where $u(t,x)$ represents the concentration of particles at time $t$ of size $x,$ and the fragmentation measure $F(y,x)$ the creation of particles of size $x$ out of fragmenting particles of size $y$.}  The mathematical properties of the fragmentation equation have been extensively studied  using a great variety of methods (statistical physics; formal asymptotics; real, complex and  functional analysis; linear semigroup theory; probability methods). Only a few references are given here  among the vast existing mathematical  literature as:   on particular solutions \cite{Montroll_Simha40, ZiffMcGrady85},  on the existence and uniqueness of  solutions for the Cauchy problem \cite{Stewart90, Melzak, BallCarr, DubovskiStewart, MLM, Banasiak1}, on detailed properties of the solutions \cite{PerthameRyzhik, BCG13, CCM, Bertoin1,   Haas2008}. For a rather complete list of references the interested reader may consult \cite{Banasiak1, Banasiak2,  zora77208}.}

\textcolor{black}{Due to  its importance in the modelling (see for example \cite{Val, Ram74}), and its  very rich mathematical properties,  a  \MT{class of} fragmentation \MT{measures} $F$ ha\MT{ve} proved to be particularly fruitful: \MT{the measures composed with} a fragmentation rate $B(x)$, that depends on the particle size, and a fragmentation kernel $\kappa (y/x)$, that describes the probability that a particle of size $y$ is created by fragmentation of a particle of size $x$:
 \begin{equation}\label{eq:frag_intro_simF}
 F(x, y)= \frac {B(x)} {x}\kappa\left(\frac {y} {x} \right).
 \end{equation}
 \MT{The fact that the probability to obtain a particle of size $y$ out of a particle of size $x$ only depends on the ratio $y/x$ is a classical assumption often referred to as a 'self-similarity property'~\cite{Bertoin1,BW,CCM}.}
 In order to be coherent with the modelling, the fragmentation kernel must  be  a finite measure compactly supported on $(0, 1)$ \MT{OR [0,1]?} and such that $zd\kappa(z)$ is a probability measure.
 With such a fragmentation measure,  equation (\ref{eq:frag_intro_gen}) reads then
}
\begin{equation}\label{eq:frag_intro}
\dfrac{\p}{\p t}u(t,x) =- B(x)u(t,x) +  \dst\int_{x}^{\infty}\kappa\left(\dfrac{x}{y} \right)B(y) u(t,y)\f{dy}{y}.
 \end{equation}

The  two key physical parameters $B$ and $\kappa$
encode fundamental information on the mechanical stability 
of each particle, 
and can take  different
forms depending on the particular process considered.
\Mig{To estimate} the parameters $B$ and $\kappa$ using 
population data (\comMD{when }only the particles density $u(t,x)$ can be accessed, not the trajectory of each individual particle) is a challenging \textcolor{black}{mathematical } problem, \textcolor{black}{important for the applications.}
The \Mig{specific} application that led us to its study originates from 
the \MT{works} \cite{XHR09,XR13}, where the authors provide experimental 
size distribution profiles of different types of amyloid fibrils, 
in order 
to estimate their intrinsic division properties ($B$ and $\kappa$) and then to relate them to their respective pathogenic properties \cite{BTMPSTDX19}.
It is not possible to follow experimentally each fibril one by one,
hence the necessity to draw the characteristic features 
of each particle from the evolution of the whole population.

{\color{vert}\subsection{Review on existing results to estimate the fragmentation kernel} \label{sec:review}}

Identifying the fragmentation kernel $\kappa$ from observed population data has been a challenging problem \textcolor{black}{for some time. {\color{vert} As detailed below, i}n most of the cases up to now, the analysis of this problem has been based on the idea of  self-similar long-time asymptotic behaviour of the solutions to (\ref{eq:frag_intro}){\color{vert}, see~\cite{BCG13,Bertoin1,BW,CCGU,DubovskiStewart}}.}

In the seminal paper \cite{Kolmogorov40} of A. N. Kolmogorov (1940)\comMD{} \textcolor{black}{on general random processes of particle grinding, the self-similar}  
 large time behaviour of the size distribution  is 
identified in a slightly different but closely related  equation, \textcolor{black}{discretised in  time and  with  a constant fragmentation rate $B$}.
The self-similar asymptotic behaviour of the fragmentation equation \textcolor{black}{written for the cumulative distribution function} \textcolor{black}{was} established in \cite{Filippov61}
by Filippov (1961)  for the case $B(x)=x^{\gamma},\;\gamma>0$  and the result
is now well-known by the scientific community
under fairly general balance assumptions on the parameters \textcolor{black}{(see for instance  \cite{PerthameRyzhik, EMR05,BCG13, CCM}).}

From the seventies, scientists from physics and chemical departments
have been using this similarity concept for the kernel inverse problem. 
In 1974, a scientist of a department of chemical engineering 
\cite{Ram74} developed a method to extract information 
on probabilities of droplet-breakup, and in particular on the daughter-drop-distribution (in 
modern terms: the fragmentation kernel), 
as a function of drop sizes data, obtained from an experiment
of pure fragmentation in a batch vessel. 
To do so, the self-similar  behaviour of the \textcolor{black}{solutions of the } fragmentation equation\textcolor{black}{, written here too for the cumulative distribution function, is assumed}, thereby restricted to {\color{vert} power law} fragmentation rates {\color{vert} ({\it i.e.} $B(x)=\alpha x^\gamma$ with $\alpha,\,\gamma>0$)},
and
the moments of the kernel \MT{are estimated} from the moments of the large time size distribution.
To recover the kernel from its moments, a method based on the expansion of the kernel 
on a specific polynomial basis is suggested.
These results are generalised later in 1980 \cite{NRamG80} to non-{\color{vert}power law} fragmentation rates associated with 
an adapted definition for the self-similarity of the kernel so as to keep the  self-similar asymptotic behaviour of the model.

From the late nineties, the large improvements in computer hardware
opened the field of numerical investigations
of mathematical models.
In  \cite{KK05} the authors provide insights on how the 
stationary shape of the particle size distribution is impacted by the kernel. 
Their conclusion is that the inverse problem of assigning
a breakage kernel to a known self-similar particular size distribution is
ill-posed not only in a mathematical but also in a physical
sense since quite different kernels correspond to almost
the same particles size distribution. {\color{vert} This conclusion has been confirmed by the theoretical results of~\cite{BCG13,DET18}: in these articles, key properties of the fragmentation kernel have been proved to be linked to unobservable quantities of the asymptotic profile, namely its behaviour for very small or very large sizes.}
{\color{vert} In~\cite{DET18}, we proposed} a reconstruction formula for $\kappa$ based on the mere
knowledge of the long-time asymptotic profile $g$   \textcolor{black}{of the solutions of  (\ref{eq:frag_intro})}
in suitable functional spaces \cite{DET18}.
This formula involves the moments of order $s$ of the asymptotic profile $g$, $s$ being taken along a vertical complex line, {\it i.e. }
$s= u+i v, \; v\in (-\infty,\infty)$. {\color{vert}However, due to its severely ill-posed\MT{ness} on the one hand, and to the impossibility of observing the asymptotic profile for very small or very large sizes on the other hand, this reconstruction formula revealed of little practical use. Of note, a similar estimate in the case of the growth-fragmentation equation with constant growth and division rates has been carried out in a statistical setting in~\cite{HNRT19}, together with a consistency result and a numerical study.}

\textcolor{black}{We thus explored further the influence of the kernel on the time evolution of the length distribution~\cite{DETX20}.
We showed that {\color{vert} despite the previously seen limitations}, the asymptotic profile 
remains helpful to distinguish 
whether the fragmentation kernel is an
erosion-type kernel
(one of the fragments has a size close to
that of the parent particle) or  produces particles of similar sizes.
{\color{vert} By statistical testing, w}e also showed that  departing from the same initial condition, 
there
exists a time-window right after
the initial time where two different kernels give rise to a maximal difference of their corresponding size distribution solutions, and 
that the
initial condition that maximizes this difference is a very sharp Gaussian.} {\color{vert} This last remark led us to explore further the short-time behaviour of the solution, which is the basis of our present study.}

\textcolor{black}{Inverse problems for fragmentation equations related with our {\color{vert}"short-time"} approach appeared in 2002 \cite{AgoshkovDubovski} and in 2013 \cite{Alomari}. In the first article the authors consider the reconstruction of a source term in a coagulation-fragmentation equation. The equation is linearized assuming that for short times the solution $c(t, x)$ of the equation may be approximated by the initial data $c_0$, and keeping only linear terms in the perturbation. The inverse problem for the linear equation is then solved
using optimal control methods, the solvability theory of operator equations, and  iteration algorithms.  In the second article the authors solve the linearization of the  inverse problem for (\ref{eq:frag_intro_gen}), obtained assuming   $F=1+f$ with $|f|$ small, $u=c_0+g$ where $c_0$ is the solution of (\ref{eq:frag_intro_gen}) with $F=1$, assuming that $|g|$ is also small, and keeping in the equation only principal terms.} 

\

{\color{vert}\subsection{Outline of our main results}}

\textcolor{black}{In the present article we revisit the question of estimating $\kappa$  from measurements on the population density $u(t,x)$, and  we introduce two main novelties.} \textcolor{black}{First, a}  new method, that  only uses  short-time measurements of the solutions. {\color{vert} As pointed out in the above review,} \textcolor{black}{this is a very different idea  from those generally used up to now since these  are  based on the long time self-similar behaviour of the solutions.  Second, a reconstruction formula for the Mellin transform of $\kappa$ and an estimate of the error of the approximation.
More precisely,}
we assume the fragmentation rate $B$ to be known, and  provide a reconstruction formula for the \MT{sole} fragmentation kernel.

\comMD{Unless specific assumptions are stated, w}e restrict the study to {\color{vert} power law} fragmentation 
rates 

 \Mig{
\begin{equation}
\label{B}
B(x)= \a x^{\gamma}, \qquad \gamma>0,\quad \alpha > 0.
\end{equation}
}
The {\color{vert}} guiding idea {\color{vert}of our study} is based on the following remark: for $\Delta t$ small enough, 
the solution \textcolor{black}{$\mu $} to the fragmentation equation \eqref{eq:frag_intro} with $B$ defined by~\eqref{B} formally satisfies

\begin{equation}\label{eq:approx1}
\textcolor{black}{\mu }(t+\Delta t,x)\approx \textcolor{black}{\mu }(t,x) -\a  \Delta t x^{\gamma}\textcolor{black}{\mu }(t,x)+ \alpha \Delta t \dst\int_{x}^{\infty}\kappa\left(\dfrac{x}{y} \right)y^{\gamma-1} \textcolor{black}{\mu }(t,y)dy+ o(\Delta t).
\end{equation}

If we assume that at time $t$, the size distribution $\textcolor{black}{\mu }(t,x)$ is a Dirac delta function at $x= 1$, \Mig{that
is denoted $\delta_1$ or $\delta(x-1)$,} then 

\begin{equation*}
\textcolor{black}{\mu }(t+\Delta t,x)\approx \delta(x-1) -\a  \Delta t \delta(x-1) +  \alpha  \Delta t \kappa(x) +o(\Delta t), 
\end{equation*}
and thus the kernel $\kappa$ can be  directly  estimated from the measurement of the profile $\textcolor{black}{\mu }$ at time $t+\Delta t$ as
\begin{equation*}
 \kappa(x) \approx \dfrac{1}{\a \Delta t}\left(\textcolor{black}{\mu }(t+\Delta t,x) - (1-\a \Delta t) \delta(x-1)\right) +o(1), \qquad  \Delta t \ll 1.
\end{equation*}

To make the above estimate of $\kappa$ rigorous, 
we  \textcolor{black}{first prove the  uniqueness of a \MT{non negative} solution $\mu $ to the Cauchy problem for the  equation (\ref{eq:frag_intro}) when $\kappa$ and the initial data \MT{$\mu_0$}  are non negative measures satisfying some suitable conditions (see Theorem~\ref{thm:WP_BV} below).} \textcolor{black}{Then we} expand the solution $\textcolor{black}{\mu }(t,x)$ 
as a power series about $t$ in the Banach space of Radon measures.
Up to our knowledge, such representation of \textcolor{black}{the measure-valued  solution} of the fragmentation equation \textcolor{black}{with a fragmentation kernel measure $\kappa$ is new{\color{vert}, though }\textcolor{black}{some explicit solutions of the fragmentation equation in form of series are given in  \cite{ZiffMcGrady85, ZiffMcGrady86}  for  particular continuous fragmentation functions  and particular initial  data $\mu _0$.}}

\textcolor{black}{To estimate}  $\kappa$ 
 from the measurement of the distribution profile $\textcolor{black}{\mu }(\Delta t ,.)$ \textcolor{black}{for small values of $\Delta t$}, the cunning \textcolor{black}{observation}
is to impose that the initial distribution $\textcolor{black}{\mu_0 }$ is a Dirac mass. 
In other words, at time $t=0$, all particles should have the same size. 
Heuristically, 
if all particles have the exact same size at $t=0$, after a time $t$
long enough so that {\color{vert} a non-negligible quantity of} particles have broken once, but short enough so that 
{\color{vert} a negligible quantity of} particles has broken twice, 
it is clear that the kernel $\kappa$, sometimes referred to as the ``daughter particle distribution" can directly be read on the distribution {\color{vert} of particles strictly smaller than initially}.

\textcolor{black}{Of course} no experiment may produce a suspension where all the particles have the same size
since it would mean being able to follow each particle one by one. 
However, we can hope to obtain a suspension where all particles have approximately the same size, described {\color{vert} for instance} by a gaussian
distribution \textcolor{black}{that would be not too far from a Dirac delta function}.  For that reason the stability 
 of  \textcolor{black}{ our estimates of $\kappa$ with respect  to measurement noise and to the error on the initial data $\mu _0$ must be estimated
. It is then necessary to consider 
measure-valued solutions. The existence and uniqueness of such solutions to coagulation or  fragmentation equations has been already studied in the literature of mathematics, for example in \cite{Norris}, \cite{Bertoin} for the coagulation equation, \cite{BW}, \cite{BCGM} for a growth-fragmentation equation,  \cite{CCGU} for a fragmentation equation but where only the case $\gamma =0$ would satisfy the hypothesis.}

Quantifying the stability result first \Mig{requires to understand what are the types of experimental
uncertainties on the initial data 
coming from the experiments. These} are twofold: first, 
instead of a delta function at $x=x_0$, the initial data is a spread {\color{vert}distribution} 
with variance $\sigma>0$ (\Mig{due to the} impossibility to obtain a perfectly homogeneous suspension). \Mig{Second, this {\color{vert}distribution} is centered at  $x=x_0+\eps$ for some $\eps>0$, instead
of $x=x_0$} 
(possible bias on the measurement of the particles' sizes). \Mig{In order to deal with} these uncertainties, the Bounded-Lipshitz (BL) norm is \Mig{better suited} than the total variation norm (TV). For instance, $\forall a\in \R,\,b\in \R,$  such that $|b-a| <2$
\Mig{
\begin{equation*}
 \|\delta_{a}-\delta_b \|_{TV} =2,\qquad \|\delta_{a}-f_{a,\sigma} \|_{TV} =2,
\end{equation*}
whereas
\begin{equation*}
\|\delta_{a}-\delta_b \|_{BL} =|b-a|,\qquad \|\delta_{1}-f_{1,\sigma} \|_{BL} \leq\f{2\sqrt{\sigma}}{\sqrt{2\pi}},
\end{equation*}
}
where $f_{a,\sigma}$ is the density of the gaussian function centered at $x=a$ with variance $\sigma$.

\textcolor{black}{However, {\color{vert} in the case of a generic initial data not necessarily close to a delta function, a reconstruction formula may still be obtained through the use of the moments of the solution. F}rom the  very beginning {\color{vert} of} the study of inverse problems for  the fragmentation equation,   the moments of the solution $\mu $ (\cite{Ram74, NRamG80}), and then its Mellin transform, have been extensively used.  Of note, the Mellin transform of $\kappa $ (denoted by $K$ from now on), is of interest by itself since it provides a  range of moments of the fragmentation kernel, in particular variance and skewness.  An exact expression for $K$ was obtained in \cite{DET18} from the  long-time self similar asymptotic profile of the solution  $\mu $ in terms of an (in general)  oscillatory  integral, but no way to approximate this integral and estimate the error was given.
The exact series representation of the solution $\mu $ to (\ref{eq:frag_intro})  obtained \MT{in the present paper} may be used in order to deduce an approximation of  $K$ and estimate the error of such an approximation.}

Our last contribution is  thus a robust reconstruction formula of $K$. \textcolor{black}{To this end, we use short-time measurements of  the solution $\mu $ to equation (\ref{eq:frag_intro}) for generic initial data $\mu _0$, not necessary close to a Dirac measure, and the initial data itself. This dependence on the initial data $\mu _0$  contrasts with the result in \cite{DET18} where the reconstruction formula (see Theorem 2 in~\cite{DET18}) only involves the long-time asymptotic profile of the solution.}
Since the equation is autonomous, this means to be able to access two close consecutive measurements of the particles size distribution {\color{vert} - an experimental setting much more realistic than to depart from a mono-disperse suspension}.

{\color{vert} To sum-up,} the main novelties brought by this paper are

\

\begin{itemize}
 \item \Mag{a proof of the uniqueness \MT{and stability} of the solution in the space of \MT{non negative} measures endowed with the total variation norm (Theorem \ref{thm:WP_BV})},
 \item a representation of \MT{a} solution to the fragmentation equation (\MT{endowed with any non-negative measure as initial data)}
 as a power series in the Banach space of measures endowed with the total variation norm
 (Theorem \ref{thm:power_serie}), \MT{implying in particular existence of measure-valued solutions to \eqref{eq:frag_intro},}
 \item \MT{a proof of the non-negativity of the power series solution (Theorem \ref{thm:positivity}),}
 \item \MT{as a consequence of the three previous items and summarized in Corollary \ref{cor:summary}, a statement of existence of a unique non-negative measure-valued solution to \eqref{eq:frag_intro}, accompanied with a power series representation of this unique solution,}
 \item a stability result for the solution to the fragmentation equation 
 for the BL norm, which is a norm adapted to weak convergence of measures (Theorem  \ref{thm:stabBL}),
 \item a robust reconstruction formula for the fragmentation kernel 
 involving the short-time solution of the fragmentation equation endowed with 
 a delta function as initial condition. 
 Robustness is to be understood in the sense that if the initial condition is close to 
 a delta function at $x=x_0$ in the BL norm (for instance
 a rectangular function centered in $x_0$ or a delta function at $x=x_0+\epsilon$ with $\epsilon$ small), then the estimated kernel obtained with the reconstruction formula
 is close to the real kernel in the BL norm
 (Theorem \ref{thm:short} and Theorem \ref{thm:stab}),
 \item a reconstruction formula for the Mellin transform $K$ of the fragmentation kernel $\kappa$  involving the short-time solution of the fragmentation equation endowed with 
any initial condition (Theorem \ref{thm:reco}).
\end{itemize}

The outline of the paper is as follows. 
\textcolor{black}{In the remaining of Section \ref{sec:intro},
some properties of
measures and classical results on measure theory are recalled, as well as the definition of Mellin transform and Mellin  convolution. Section \ref{Section20} is devoted to the proof of the existence, uniqueness, \MT{non negativity} and  series representation of solutions to  the problem \eqref{eq:frag_intro} (with $B$ defined by~\eqref{B}) in the space of Radon measures, and their stability with respect to the initial data in the TV norm.}
In Section \ref{sec:short},
estimates of the fragmentation kernel and bounds for the  error of such estimates are obtained using, for small values of the time variable,  the expression as a series of the solution $\mu$ provided by Theorem~\ref{thm:power_serie}. The stability of these estimates with respect to the initial data and noise measurements is also considered in BL norm.
In Section \ref{sec:reco}, we study the Mellin transform $K$ of $\kappa $. \Mig{Under some regularity assumption on $\kappa $ and on the initial data $\mu _0$, a reconstruction formula $K^{est}$ of
$K$ is obtained, only based on short time-intervals measurements
of the solution to the fragmentation equation and  an  estimate of the error $K-K^{est}$ is obtained. An estimate of the variance of $\kappa $ is then deduced, and under a stronger regularity assumption on $\kappa $, a pointwise estimate of the difference of $\kappa$ and the inverse Mellin transform of $K^{est}$ is proved.} We end the paper with a numerical investigation of the short-time behaviour of the fragmentation equation, we illustrate the estimation results of Theorems~\ref{thm:short} and~\ref{thm:stab}, and 
 we explore how Theorem \ref{thm:reco} can be applied 
 to recover the variance of the kernel from the data.
 \MT{For every theorem, the constants arising in estimates and depending continuously on parameters $p_1,p_2, \dots$ are denoted by $C(p_1,p_2, \dots)$.}

\subsection{\Mig{Short reminder} on measure theory}
We define $\M(\R^+)$ as the set of Radon measures $\mu$ (not necessarily probability measures)
such that $\supp(\mu) \subset \R^+$. 
Let us recall that $\M(\R^+)$ is the dual space 
of the space $(\mathcal{C}(\R^+),\|.\|_{\infty})$ of continuous functions.
We denote by
$(\mu^+,\mu^-)$ the Jordan decomposition of $\mu$.
\Mag{We endow $\M(\R^+)$  with two different norms: the total variation norm and the Bounded-Lipschitz norm.
As mentio\comMD{n}ed in the introduction, the final purpose is to obtain stability with respect {\color{vert} to} the BL norm, the TV norm being a technical intermediate tool
to reach this pur\comMD{p}ose.
}
\Mag{
The TV norm of
the (signed) measure  $\mu \in \M(\R^+)$ is
defined as}
\Mag{
\begin{equation}
\label{TVdef}
    \|\mu\|_{TV} = \sup \{\dst\int_{R^+}  \varphi(x)d\mu(x), \quad \varphi \in \mathcal{C}(\R^+)\cap L^1(d|\mu|), \; \|\varphi\|_{\infty} \leq 1\}.
\end{equation}
}
We recall that 
$\left(\M(\R^+),\|.\|_{TV}\right)$ is a Banach space.
We now define the BL norm as 
\begin{equation}\label{def:BL}
    \|\mu\|_{BL} = \sup \{\dst\int_{R^+}  \varphi(x)d\mu(x), \quad \varphi \in \mathcal{C}(\R^+)\cap L^1(d|\mu|), \; \|\varphi\|_{\infty} \leq 1, \; 
    \|\varphi '\|_{\infty}\leq 1\}.
\end{equation}
\Mag{
Comparing \eqref{TVdef} and \eqref{def:BL}, it is clear that 
\begin{equation}
\label{BL_TV}
 \comMD{\forall \; \mu\in \M(\R^+),\qquad} \|\mu\|_{BL} \leq \|\mu\|_{TV}.
\end{equation}
}
An optimal transportation point of view is provided in \cite[Proposition 23]{PRT17} 
for the BL norm.
It is proven that for any signed Radon measure with finite mass $\mu$ we have

\Mig{
\begin{equation}
\begin{aligned}
&\|\mu\|_{BL} =\inf \Big\{ (\|\mu^+ - \nu\|_{TV} +\|\mu^-- \eta\|_{TV})+  W_1(\nu,\eta),\quad  (\nu, \eta) \in \mathcal{M _{ \mu  }^+}(\R^+),\Big\}
 \label{BLdef}
 \end{aligned}
\end{equation}
\begin{equation}
\begin{aligned}
& \mathcal{M _{ \mu  }^+}(\R^+)=\Big\{ (\nu, \eta)\in \mathcal{M^+}(\R^+)\times \mathcal{M^+}(\R^+);\,  \nu\leq \mu^+ , \eta \leq \mu^-, \|\nu\|_{TV}=\|\eta\|_{TV} 
\Big\}\label{BLdef2}
 \end{aligned}
\end{equation}
}

where $\mathcal{M^+}(\R^+)$ is the space of positive Radon measures with support in $\R^+$, and $W_1$ stands for the classical Wasserstein distance \cite{Villani_topics} between two positive measures of same mass, namely
\begin{equation}\label{defWasserstein}
\begin{array}{c}W_1(\nu,\eta):=\inf\limits_{\pi\in \Pi(\nu,\eta)} \int_{\R^+} \vert x-y\vert d\pi(x,y),\\ \\
 \Pi(\nu,\eta):=\big\{ 
\pi\text{ positive measure on }\R^+\text{ s.t.} \int_{\R^+}\pi(x,y)dx=\eta(y),\quad \int_{\R^+}\pi(x,y)dy=\nu(x)\big\}.
\end{array}
\end{equation}
Let us recall that for $\mu,\nu$ two probability measures and for $a>0$, we have 
$W_1(a\mu, a\nu) =  a W_1(\mu,\nu)$.
Formula \eqref{BLdef} \eqref{BLdef2} can be interpreted as follows: 
the BL norm of the signed measure $\mu$ is the BL distance between the two positive measures 
$\mu^+$ and $\mu^-$.
Now take $\mu^+$ and $\mu^-$ two positive measures. Consider $\nu$ and $\eta$ two positive measures such that $\nu \leq \mu^+$, $\eta\leq \mu^-$ and $\|\nu\|_{TV}=\|\eta\|_{TV}$.
The subpart $\nu$ of the measure $\mu^+$ is transported onto the subpart $\eta$ of the measure $\mu^-$, with a cost $W_1(\nu,\eta)$. The remaining positive measures $(\mu^+ -\nu)$ and 
 $(\mu^- -\eta)$
are both cancelled with a cost 
$\|\mu^+ -\nu\|_{TV}+\|\mu^- -\eta\|_{TV} $.
Among all couples $(\nu,\eta)$ that satisfy $\nu \leq \mu^+$, $\eta\leq \mu^-$ and $\|\nu\|_{TV}=\|\eta\|_{TV}$, we choose one such that 
the sum $(\|\mu^+ - \nu\|_{TV} +\|\mu^-- \eta\|_{TV})+  W_1(\nu,\eta)$ is minimal (such a couple exists, it is proved in \cite{PR14} that the infimum is actually a minimum).
Let us give three examples.

\begin{itemize}
\item
Take $\mu=\delta(x-1)$ and $\mu_{\eps}=\delta(x-(1+\eps))$.
Consider $\nu_a = a \mu$ and $\eta_a= a \mu_{\eps}$ with $0\leq a \leq 1$.
Then  $0 \leq \nu_a \leq \mu$, $0 \leq \eta_a \leq \mu_{\eps}$, and 
$\|\nu_a\|_{TV} =\|\eta_a\|_{TV}   =a$.
Using formula \eqref{BLdef}\eqref{BLdef2} we have 
\begin{equation*}
\begin{aligned}
 \|\mu-\mu_{\eps}\|_{BL}& = \inf\limits_{ 0 \leq a \leq 1}
 \Big\{(\|\mu-\nu_a\|_{TV} +\|\mu_{\eps}-\eta_a\|_{TV})
+  W_1(\nu_a,\eta_a)
 \Big\} \\
 & = \inf\limits_{ 0 \leq a \leq 1}
 \Big\{2(1-a)
+  a \eps
 \Big\} \\
 &= 
 \begin{cases}
 & \eps\qquad\text{for}\;\eps\leq 2, \\
&2 \qquad \text{ for}\;\eps>2.
 \end{cases}
 \end{aligned}
\end{equation*}

\item
Take $\mu=\delta(x-1)$ and $\mu_{\sigma}$ is the measure with the rectangular density 
$\f{1}{2\sigma\sqrt{3}}\mathbb{1}_{[1-\sigma\sqrt{3},1+\sigma\sqrt{3}]}$ with variance $\sigma^2$
for $0<\sigma<1$. 
 We take $\nu_a=\mu=\delta_1$ and $\eta_a=\mu_\sigma=f_\sigma dx,$ in~\eqref{BLdef}, and obtain 
  \begin{equation*}
 \|\mu-\mu_{\sigma}\|_{BL} \leq  W_1(\mu,\mu_\sigma)\leq\int_{1-\sigma \sqrt{3}}^{1+\sigma \sqrt{3}} \f{\vert y-1 \vert dy}{2\sqrt{3}}=\f{\sqrt{3}}{2}\sigma.
 \end{equation*}
\item Take $\mu=\delta(x-1)$ and $\mu_\sigma$ the Gaussian with mean $1$ and variance $\sigma^2.$ We have
$$\Vert \mu - \mu_\sigma\Vert_{BL} \leq W_1(\mu,\mu_\sigma) \leq \int \vert x \vert \f{e^{-\f{x^2}{2\sigma^2}}}{\sqrt{2\pi}\sigma}=\f{2\sigma}{\sqrt{2\pi}}.$$ 
\end{itemize}

 We recall that for $\mu\in\M(\R^+)$ and $T\in \mathcal{C}(\R^+)$, the pushforward $\eta$ of the measure $\mu$ by the function $T$ is defined as the unique measure 
 \begin{equation*}
   \MT{\eta=T\#\mu}
 \end{equation*} such that
for all $\varphi \in \mathcal{C}(\R^+)$,
\begin{equation*}
 \dst\int \varphi(x) d\eta(x) =   \dst\int (\varphi\circ T)(x) d\mu(x).
\end{equation*}
  For $\ell>0$, we define the application
\begin{equation}
\label{Tell}
 T_{\ell}(x) = \ell x, \qquad x\in \R^+.
\end{equation}

\subsection{Mellin transform}

\begin{definition}
 For a measure $\mu \in \M(\R^+)$, 
 its Mellin transform $M[\mu]$ is defined as 
 \begin{equation}
 \label{MT}
   M[\mu](s)= \dst\int_{\R^+} x^{s-1}d\mu(x),
 \end{equation}
 for $s\in \mathbb{C}$ such  that \eqref{MT} is well-defined.
\end{definition}

\begin{definition}[Mellin convolution (cf. \cite{ML}]
\label{def:mult_conv}
Take $\mu$ and $\nu$ two compactly supported finite measures on $\R^+$.
Their \MT{Mellin convolution (sometimes referred to as multiplicative convolution)} is defined as 
\begin{equation*}
 \forall \varphi \in \mathcal{C}(\R^+), \qquad  \langle\mu \ast \nu ,\varphi\rangle =  \langle \mu^x \otimes \nu^y,\varphi\circ p \rangle,
\end{equation*}
where $p : (x,y)\to xy$.
\end{definition}

If $d\mu(x)=f(x)dx$ and $d\nu(x)=g(x)dx$ for $f$ and $g$ in $L^1(\R^+)$, then $\mu\ast\nu$ is the measure with density 
\begin{equation*}
(f\ast g) (x) =\dst\int_{\R^+} f(y)\; g\left( \dfrac{x}{y}\right)\dfrac{dy}{y}.
 \end{equation*}
If $d\mu(x)=f(x)dx$ with $f\in \mathcal{C}(\R^+)$ and $\nu=\delta(y-\ell)$, then $\mu\ast\nu=T_\ell \# \mu$ is the measure with density
 \begin{equation*}
  (f\ast \nu)(z)=   \dfrac{1}{\ell}f\left(\dfrac{z}{\ell}\right).
 \end{equation*}

 \begin{proposition}[Mellin transform and Mellin convolution]
\label{prop:MellinMC}
Take $\mu$ and $\nu$ two compactly supported finite measures on $\R^+$.
For the $s$ for which the expression below is defined, we have
\begin{equation*}
 M[\mu\ast\nu](s) = M[\mu](s) M[\nu](s).
\end{equation*}
\end{proposition}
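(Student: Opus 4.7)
The plan is a direct computation from the definition of Mellin convolution, combined with the factorization $(xy)^{s-1}=x^{s-1}y^{s-1}$ and Fubini's theorem. By Definition~\ref{def:mult_conv}, for any $\varphi\in\mathcal{C}(\R^+)$ one has
\begin{equation*}
 \int_{\R^+}\varphi(z)\,d(\mu\ast\nu)(z)=\int_{\R^+}\!\!\int_{\R^+}\varphi(xy)\,d\mu(x)\,d\nu(y).
\end{equation*}
If $z^{s-1}$ were an admissible test function, applying this identity with $\varphi(z)=z^{s-1}$ and factoring would give the result immediately. The only delicate point is that $z\mapsto z^{s-1}$ need not belong to $\mathcal{C}(\R^+)$: for complex $s$ with $\mathrm{Re}(s)<1$, it blows up at the origin. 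This has to be bypassed by a truncation argument.

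First I would fix $s\in\mathbb{C}$ for which all three quantities $M[\mu](s)$, $M[\nu](s)$ and $M[\mu\ast\nu](s)$ are defined, that is, such that $x\mapsto |x|^{\mathrm{Re}(s)-1}$ is integrable against $|\mu|$, $|\nu|$, and $|\mu\ast\nu|$ respectively. Since $\mu$ and $\nu$ are compactly supported in $\R^+$, the support of $\mu\ast\nu$ is also compact (contained in the product of the supports via the map $p(x,y)=xy$), so the only integrability issue is near $0$.

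Next, I would introduce a continuous cutoff $\chi_\varepsilon\in\mathcal{C}(\R^+)$ with $\chi_\varepsilon\equiv 0$ on $[0,\varepsilon/2]$ and $\chi_\varepsilon\equiv 1$ on $[\varepsilon,\infty)$, and set $\varphi_\varepsilon(z)=\chi_\varepsilon(z)z^{s-1}$. Each $\varphi_\varepsilon$ is continuous and bounded, so the definition of $\mu\ast\nu$ applies and yields
\begin{equation*}
 \int_{\R^+}\varphi_\varepsilon(z)\,d(\mu\ast\nu)(z)=\int_{\R^+}\!\!\int_{\R^+}\chi_\varepsilon(xy)\,x^{s-1}\,y^{s-1}\,d\mu(x)\,d\nu(y).
\end{equation*}
Letting $\varepsilon\to 0$, the integrand on the left converges pointwise to $z^{s-1}$, dominated by $|z|^{\mathrm{Re}(s)-1}\in L^1(|\mu\ast\nu|)$, and on the right it converges pointwise to $x^{s-1}y^{s-1}$, dominated by $|x|^{\mathrm{Re}(s)-1}|y|^{\mathrm{Re}(s)-1}$, which lies in $L^1(|\mu|\otimes|\nu|)$ by Tonelli. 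Hence dominated convergence gives
\begin{equation*}
 M[\mu\ast\nu](s)=\int_{\R^+}\!\!\int_{\R^+} x^{s-1}y^{s-1}\,d\mu(x)\,d\nu(y).
\end{equation*}

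Finally, Fubini's theorem (applicable thanks to the same domination used above) splits the double integral into the product $M[\mu](s)\,M[\nu](s)$, concluding the proof. The only real step requiring care is the truncation/limit argument; everything else is algebraic factorization and Fubini. No substantial obstacle is expected.
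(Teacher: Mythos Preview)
The paper states Proposition~\ref{prop:MellinMC} without proof, treating it as a classical fact (it cites \cite{ML} for the Mellin convolution). Your argument is correct and is exactly the standard one: apply the defining duality $\langle\mu\ast\nu,\varphi\rangle=\langle\mu\otimes\nu,\varphi\circ p\rangle$ to $\varphi(z)=z^{s-1}$, factor $(xy)^{s-1}=x^{s-1}y^{s-1}$, and use Fubini. The truncation step you include is a careful way to justify plugging in a test function that may fail to be bounded near $0$; since the paper gives no proof at all, your write-up is in fact more detailed than what the paper offers.
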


\section{
\textcolor{black}{Measure-valued solutions to the fragmentation equation: existence, uniqueness, stability  and series representation.}}

\label{Section20}
\Mig{The basis of our analysis in all the remaining of this work are the weak solutions to the Cauchy problem for equation \ref{eq:frag} with the initial condition
\begin{equation}
\label{eq:data}
\mu _t(\MT{t=0})=\mu _0,
\end{equation}
whose precise definition is given  below.} 
Throughout the present paper, the following assumptions are used.

\begin{enumerate}[label={\bf Hyp-\arabic*}]
\item \label{hyp1} The fragmentation kernel  $\kappa \in \mathcal{M^+}(\R^+)$  contains no atom at $x=0$ and at $x=1$, and satisfies 
\begin{equation}
 \supp(\kappa)\subset [0,1], \qquad \int\limits_0^1d\kappa(z) =N<+\infty, \qquad  \int\limits_0^1 z d\kappa(z)=1.
\end{equation}
\item \label{hyp2} The initial condition  $\mu_0 \in \mathcal{M^+}(\R^+)$ is compactly supported 
\begin{equation}
 \supp(\mu_0)\subset [0,\MT{L}].
\end{equation}
\end{enumerate}

Even though $\kappa$ and $\mu_t$ are measures, we sometimes write the fragmentation equation as

 \begin{equation}\label{eq:frag}
 \begin{aligned}
\dfrac{\p}{\p t}\mu_t (x)& =-  \a x^{\gamma}\mu_t(x) + \alpha \dst\int_{x}^{\infty}\kappa\left(\dfrac{x}{y} \right)y^{\gamma-1} d\mu_t(y) ,\qquad 
  \mu_{t=0}(x)  = \mu_0(x),
 \end{aligned}
 \end{equation}
 
or as

 \begin{equation*}\label{eq:frag2}
 \begin{aligned}
\dfrac{\p}{\p t}\mu_t (x)& =-  \a x^{\gamma}\mu_t(x) + \alpha \dst\int_{x}^{\infty}\kappa\left(\dfrac{x}{y} \right)y^{\gamma-1} \mu_t(y)dy ,\qquad 
  \mu_{t=0}(x)  = \mu_0(x).
 \end{aligned}
 \end{equation*}

\begin{definition}[Weak solution for \eqref{eq:frag}]
 A family $(\mu_t)_{t\geq 0} \subset \M(\R^+)$ is called a measure\MT{-valued} solution to \Mig{problem}  \eqref{eq:frag_intro} \eqref{B}  \eqref{eq:data} with initial data
 $\mu_0 \in \M(\R^+)$ satisfying \eqref{hyp2} if the mapping $t\to \mu_t$ is narrowly continuous and for all $\varphi \in \mathcal{C}(\R^+)$ {\color{vert} such that $x\mapsto \varphi(x)/(1+x)$ is bounded on $[0,\infty)$,} and all $t \geq 0$,
\Mag{\begin{equation}
\label{ppcm1}
 \int_{\R^+} \varphi(x) d\mu_t(x)=  \int_{\R^+} \varphi(x) d\mu_0(x) + \dst\int_0^t ds \dst\int_{\R^+}
   d\mu_s(x)  \alpha x^{\gamma}\left( - \varphi(x)
 + \dst\int_{0}^{1} d\kappa(z)\varphi(xz)\right).
\end{equation}
}
\label{def:measure_valued_sol}
\end{definition}

We recall that $\mu_n$ converges narrowly toward $\mu$ if for all $\varphi \in C_b(\R^+)$, 
$\int \varphi d\mu_n \to \int \varphi d\mu$, where $C_b(\R^+)$ denotes the set of continuous and bounded functions defined on $\R^+$.

\textcolor{black}{
Although several results may be found  in the references given in the introduction  about the existence and uniqueness of solutions to fragmentation equations, none of them covers exactly the hypotheses that we have in mind for $\kappa$ and the initial data $\mu _0$. For the sake of completeness, our first result is then an existence and uniqueness of compactly supported \MT{and non negative} measure-valued solutions to \eqref{eq:frag}  under assumptions \eqref{hyp1}, \eqref{hyp2}. We begin with a uniqueness and stability result.}

\begin{theorem}[Uniqueness and \MT{TV}-stability for the fragmentation equation in $(\mathcal{M}_+(\R^+),\|.\|_{TV})$]
\label{thm:WP_BV}
Assume\comMD{~\eqref{hyp1}}, \eqref{hyp2} and $\comMD{\gamma\geq 0}$. Suppose that $\mu _t\in \mathcal{C}(\R^+, \mathcal{M}(\R^+))$, 
is a non negative measure-valued solution to \eqref{eq:frag}, in the sense of Definition \ref{def:measure_valued_sol}.
\textcolor{black}{Then, for all $t>0$,}
\begin{align}
&\supp(\mu_t) \subset [0,\MT{L}]  \label{pgcd11} \\
&\|\mu_t\|_{TV} \leq  \|\mu_0\|_{TV} e^{\alpha (2\MT{L})^{\gamma} (1+N) t}  \label{pgcd2}\\
&\dst\int_{\R^+} xd\mu_t(x) =\dst\int_{\R^+} xd\mu_0(x) \label{pgcd3} 
\end{align}
 where \MT{$N$ is defined in \eqref{hyp1} and $L$ is} defined in \eqref{hyp2}. In particular {\color{vert}such a solution is unique.}
\end{theorem}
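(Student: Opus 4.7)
I plan to tackle the four claims in the order: mass conservation~\eqref{pgcd3}, support propagation~\eqref{pgcd11}, the TV bound~\eqref{pgcd2}, and then uniqueness as a corollary of the stability estimate applied to the difference of two non-negative solutions. Mass conservation is the one-line computation: the test function $\varphi(x)=x$ is admissible since $x/(1+x)\leq 1$, and the bracket in \eqref{ppcm1} collapses to $\alpha x^\gamma\bigl(-x+\int xz\,d\kappa(z)\bigr)=\alpha x^{\gamma+1}\bigl(\int z\,d\kappa-1\bigr)=0$ by \eqref{hyp1}, yielding $\int x\,d\mu_t=\int x\,d\mu_0$.

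For~\eqref{pgcd11}, I would introduce the weighted quantity $G(t):=\int_{(L,\infty)} x\,d\mu_t(x)$ and show $G\equiv 0$. I would test \eqref{ppcm1} against the admissible function $\psi_\varepsilon(x):=x\,\chi_\varepsilon(x)$, where $\chi_\varepsilon$ is a Lipschitz cutoff equal to $0$ on $[0,L]$ and to $1$ on $[L+\varepsilon,\infty)$. The initial term vanishes since $\supp\mu_0\subset[0,L]$, and the bracket vanishes for every $x\leq L$ (as then $xz\leq L$ for all $z\in\supp\kappa\subset[0,1]$). For $x>L$, the elementary bound $\int\psi_\varepsilon(xz)\,d\kappa(z)\leq x\int z\,d\kappa(z)=x$ shows the bracket is dominated by $\alpha x^\gamma\cdot x\cdot(1-\chi_\varepsilon(x))$, which vanishes on $[L+\varepsilon,\infty)$ and is at most $\alpha(L+\varepsilon)^{\gamma+1}$ on $(L,L+\varepsilon)$. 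Hence $\int\psi_\varepsilon\,d\mu_t\leq\alpha(L+\varepsilon)^{\gamma+1}\int_0^t\mu_s((L,L+\varepsilon))\,ds$. Sending $\varepsilon\to 0$: monotone convergence gives the left-hand side $\to G(t)$, while the integrand on the right is uniformly dominated by $\mu_s((L,\infty))\leq G(s)/L\leq\int x\,d\mu_0/L$ (using mass conservation) and tends pointwise to $0$, so dominated convergence yields $G(t)\leq 0$, hence $G\equiv 0$; non-negativity of $\mu_t$ then forces $\mu_t((L,\infty))=0$.

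With $\supp\mu_t\subset[0,L]$ established, the TV bound \eqref{pgcd2} follows from testing \eqref{ppcm1} against the admissible $\varphi\equiv 1$: $\|\mu_t\|_{TV}-\|\mu_0\|_{TV}=\alpha(N-1)\int_0^t\int_{[0,L]}x^\gamma\,d\mu_s\,ds$, so bounding $x^\gamma\leq L^\gamma$ and $|N-1|\leq 1+N$ and applying Gronwall gives the announced estimate (the slightly generous constant $(2L)^\gamma(1+N)$ leaves room for the signed-measure variant used just below). For uniqueness, let $\mu^1_t,\mu^2_t$ be two non-negative solutions with identical data; their difference $\nu_t:=\mu^1_t-\mu^2_t$ is signed, weakly solves \eqref{eq:frag} by linearity, is supported in $[0,L]$ by the previous step, and satisfies $\nu_0=0$. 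Testing \eqref{ppcm1} against any continuous $\psi$ with $\|\psi\|_\infty\leq 1$, bounding the bracket in absolute value by $\alpha L^\gamma(1+N)$, and taking the supremum over $\psi$ yields $\|\nu_t\|_{TV}\leq\alpha(2L)^\gamma(1+N)\int_0^t\|\nu_s\|_{TV}\,ds$, so $\nu_t\equiv 0$ by Gronwall.

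The delicate point is the support step: indicator test functions are not admissible, and continuous regularizations inevitably produce a boundary contribution on $(L,L+\varepsilon)$ that would spoil a naive estimate. The decisive trick is to monitor the \emph{weighted} moment $G(t)=\int_{(L,\infty)}x\,d\mu_t$ rather than the bare mass $\mu_t((L,\infty))$: the extra factor $x$ both makes $x\chi_\varepsilon$ admissible in the sense of Definition~\ref{def:measure_valued_sol}, and, combined with mass conservation, provides the uniform-in-$s$ domination $\mu_s((L,L+\varepsilon))\leq\int x\,d\mu_0/L$ required to apply dominated convergence as $\varepsilon\to 0$.
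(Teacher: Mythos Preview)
Your argument is correct. The mass conservation, the TV bound, and the uniqueness step are essentially the same as in the paper (your choice $\varphi\equiv 1$ for the TV bound of a non-negative solution is even a shade simpler than the paper's version, which takes a generic $\varphi$ with $\|\varphi\|_\infty\le 1$; the latter is what is actually needed for the signed difference in the uniqueness step, and you do exactly that there).

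The one place where your route genuinely differs is the support property~\eqref{pgcd11}. You run an approximation argument with the cutoffs $\psi_\varepsilon(x)=x\chi_\varepsilon(x)$, control the boundary leak on $(L,L+\varepsilon)$ using mass conservation as a dominating bound, and pass to the limit. The paper instead chooses a \emph{single} admissible test function $\varphi$ with $\varphi\equiv 0$ on $[0,L]$ and $x\mapsto\varphi(x)/x$ non-decreasing (concretely $\varphi(x)=x(x-L)$ on $[L,L+1]$ and $\varphi(x)=x$ beyond), rewrites the bracket as $\int_0^x b(x,y)\,y\,(\varphi(y)/y-\varphi(x)/x)\,dy$, and observes that this is $\le 0$ for every $x$, yielding $\int\varphi\,d\mu_t\le\int\varphi\,d\mu_0=0$ in one stroke. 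The paper's trick is shorter and avoids any limiting procedure; your approach is more hands-on but has the advantage of not requiring the monotonicity observation about $\varphi(x)/x$, and it makes transparent why the first moment (rather than the zeroth) is the natural quantity to monitor above $L$.
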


\begin{proof}
\MT{Consider $\mu_t \in \mathcal{C}(\R^+,\M(\R^+))$ a non negative measure-valued solution to \eqref{eq:frag}
in the sense of Definition \eqref{def:measure_valued_sol}}. We start  proving property \eqref{pgcd11}. To this end we first notice that
\begin{align*}
\alpha x^\gamma \mu _t(x)=\MT{\alpha}\mu _t(x)\int _0^x\frac {y} {x}x^{\gamma -1}\kappa \left( \frac {y} {x}\right)dy.
\end{align*}
Then in the right-hand side of (\ref{ppcm1}) we write
\begin{align*}
\int_{\R^+}
   d\mu_s(x)  \alpha x^{\gamma}&\left( - \varphi(x)
 + \dst\int_{0}^{1} d\kappa(z)\varphi(xz)\right)=\\
 &=\int _0^\infty\int _0^x b(x, y)\, y\left( \frac {\varphi (y)} {y}- \frac {\varphi (x)} {x}\right)dyd\mu _s(x).
\end{align*}
where
\begin{align}
\label{bxy}
b(x,y)=\MT{\alpha x^{\gamma-1}\kappa \left(
\frac{y}{x}\right)}
.
\end{align}
Consider then the test function
\begin{align*}
\varphi (x)=
\begin{cases}
0 &\forall x\in [0, L]\\
x(x-L)\quad &\forall x\in [L, L+1]\\
x &\forall x\ge L+1.
\end{cases}
\end{align*}
Since $x\to \varphi  (x)/(1+x)$ is bounded and non decreasing  on $[0, \infty)$, by (\ref{ppcm1})
\begin{align*}
\int_{\R^+} \varphi(x) d\mu_t(x)&=  \int_{\R^+}
\varphi(x) d\mu_0(x) + \int_0^t \int _0^\infty\int _0^x b(x, y)\, y\left( \frac {\varphi (y)} {y}- \frac {\varphi (x)} {x}\right)dyd\mu _s(x) ds\\
& \le  \int_{\R^+} \varphi(x) d\mu_0(x) =0,
\end{align*}
\MT{where the last inequality is justified since $\mu_t\geq 0$ for $t\geq 0$ and since $x\to \varphi(x)/x$ is non decreasing as well.}
Since $\varphi\ge 0$ and $ \mu _t\ge 0$ for all $t\ge 0$ it follows that for $\varphi (x)d\mu _t(x)=0$ for all $t>0$ and almost every $x>0$. Since by construction $\varphi (x)>0$ for all $x>\MT{L}$ we deduce that for every $t>0$, $\supp (\mu _t)\subset [0, \MT{L}]$.

To prove the BV estimate (\ref{pgcd2}), we use definition \eqref{TVdef}, and take
$\varphi \in \mathcal{C}(\R^+)$ such that
$\|\varphi\|_{\infty} \leq 1$. By (\ref{ppcm1}),
\begin{equation*}
\int_{\R^+} \varphi(x) d\mu_t(x)=  
\MT{\int_{\R^+} \varphi(x) d\mu_0(x)+}\dst\int_0^t ds \dst\int_{\R^+} \alpha x^{\gamma}
   d\mu_s(x) \left( - \varphi(x)
 + \dst\int_{z=0}^{1} \varphi(xz)d\kappa(z)\right).
\end{equation*}
Let $\chi\in C([0, \infty))$ such that $||\chi||_\infty=1$,  $\chi(x)=1$ for all $x\in [0, \MT{L}]$ and $\chi(x)=0$ for $x>2\MT{L}$ and consider the function defined as
\begin{equation*}
\psi(x) :=  \alpha x^{\gamma}\chi(x)   \left( - \varphi(x)
+ \dst\int_{z=0}^{1} \varphi(xz)d\kappa(z)\right),
\end{equation*}
It satisfies $ \psi \in \mathcal{C}(\R^+)$ and, since
$\|\varphi\|_{\infty} \leq 1$,  and $\supp(\chi)\subset [0, 2 \MT{L}]$,

\begin{equation*}
\sup_{0\le x\le L} |\psi\MT{(x)}|\leq \alpha (2 \MT{L})^{\gamma} (1+N).
 \end{equation*}
Therefore,
\begin{equation*}
\int_{\R^+} \varphi(x) d\mu_t(x)\leq \|\mu_0\|_{TV}+ \alpha (2\MT{L})^{\gamma} (1+N) \dst\int_0^t \|\mu_s\|_{TV} ds,   \end{equation*}
which implies 
\begin{equation*}
\|\mu_t\|_{TV}\leq \|\mu_0\|_{TV}+ \alpha (2\MT{L})^{\gamma} (1+N) \dst\int_0^t \|\mu_s\|_{TV} ds,      
\end{equation*}
and Gronwall Lemma yields (\ref{pgcd2}).
Finally, mass conservation property (\ref{pgcd3}) is obtained by choosing $\varphi(x)=x$ in definition \ref{def:measure_valued_sol}
\MT{and using the last statement of \eqref{hyp1}}.
\end{proof}

\MT{The following proposition provides us with a solution to the fragmentation equation when initial condition is a delta mass localized at $x=\ell$ 
in terms of a fundamental solution. 
}

\begin{proposition}[Fundamental solution rescaled]
\label{prop:rescaling}
Fix $\ell>0$.
\MT{Assume that $\mu_t^{F}$ is a fundamental solution to \eqref{eq:frag}, i.e. a solution to \eqref{eq:frag} 
when the initial data is $\mu_0=\delta(x-1)$. 
Then, $\mu_t^{\ell} = T_{\ell} \# \mu_{\ell^{\gamma}t}^{F}$, with $T_{\ell}(x)= \ell x$,
is a solution to \eqref{eq:frag} with 
$\mu_0^{\ell}=\delta(x-\ell)$.}
\end{proposition}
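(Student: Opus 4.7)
The plan is a direct verification that $\mu_t^\ell := T_\ell \# \mu_{\ell^\gamma t}^F$ satisfies the weak formulation of Definition \ref{def:measure_valued_sol} with initial datum $\delta(x-\ell)$. The strategy is to pick an arbitrary admissible test function $\varphi$, rewrite every integral against $\mu_\cdot^\ell$ as an integral against $\mu_\cdot^F$ via the pushforward identity, and then invoke the weak formulation satisfied by $\mu_\cdot^F$ with the auxiliary test function $\psi(x):=\varphi(\ell x)$.

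First I would check the initial datum and the narrow continuity. Since $T_\ell \# \delta_1 = \delta_\ell$, we have $\mu_0^\ell = \delta(x-\ell)$, matching the claim. Narrow continuity of $t \mapsto \mu_t^\ell$ follows from narrow continuity of $\tau \mapsto \mu_\tau^F$ and the continuity of the time-rescaling $t\mapsto \ell^\gamma t$, together with the fact that for any $\varphi\in C_b(\R^+)$ the map $\psi = \varphi\circ T_\ell$ is also in $C_b(\R^+)$.

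Next, fix $\varphi$ admissible in the sense of Definition \ref{def:measure_valued_sol} (so that $\varphi(x)/(1+x)$ is bounded). Observe that $\psi(x)=\varphi(\ell x)$ is continuous and $\psi(x)/(1+x)$ is also bounded, hence $\psi$ is admissible as a test function for $\mu_\cdot^F$. By the pushforward definition,
\begin{equation*}
\int_{\R^+}\varphi(x)\,d\mu_t^\ell(x)=\int_{\R^+}\varphi(\ell x)\,d\mu_{\ell^\gamma t}^F(x)=\int_{\R^+}\psi(x)\,d\mu_{\ell^\gamma t}^F(x).
\end{equation*}
Applying the weak formulation satisfied by $\mu^F$ with test function $\psi$ between times $0$ and $\ell^\gamma t$,
\begin{equation*}
\int_{\R^+}\psi\,d\mu_{\ell^\gamma t}^F = \psi(1) + \int_0^{\ell^\gamma t}\!\!ds\int_{\R^+} d\mu_s^F(x)\,\alpha x^\gamma\!\left(-\psi(x)+\int_0^1 d\kappa(z)\,\psi(xz)\right).
\end{equation*}
The initial term is $\psi(1)=\varphi(\ell)=\int\varphi\,d\delta_\ell$, as required.

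For the time-integral term, perform the change of variables $s=\ell^\gamma\tau$, $ds=\ell^\gamma d\tau$, and use $\psi(x)=\varphi(\ell x)$, $\psi(xz)=\varphi(\ell x z)$ to obtain
\begin{equation*}
\int_0^t\!\!d\tau\int_{\R^+} d\mu_{\ell^\gamma\tau}^F(x)\,\alpha(\ell x)^\gamma\!\left(-\varphi(\ell x)+\int_0^1 d\kappa(z)\,\varphi(\ell x z)\right).
\end{equation*}
Setting $y=\ell x$ and applying the pushforward identity in reverse to rewrite the inner integral against $\mu_\tau^\ell$, this equals
\begin{equation*}
\int_0^t\!\!d\tau\int_{\R^+} d\mu_\tau^\ell(y)\,\alpha y^\gamma\!\left(-\varphi(y)+\int_0^1 d\kappa(z)\,\varphi(y z)\right),
\end{equation*}
which is precisely the integrand required by \eqref{ppcm1} for $\mu^\ell$. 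Combining these identities yields the weak formulation of \eqref{eq:frag} for $\mu_t^\ell$ with initial datum $\delta_\ell$.

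I do not anticipate any genuine obstacle; the only point requiring care is to verify that $\psi=\varphi\circ T_\ell$ lies in the admissible test-function class used in Definition \ref{def:measure_valued_sol}, which is immediate since $\ell$ is a fixed positive constant and the growth condition on $\varphi$ is preserved by the dilation.
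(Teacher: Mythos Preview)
Your proof is correct and follows essentially the same approach as the paper: verify the initial datum, use the pushforward identity to reduce to the weak formulation for $\mu^F$ with the dilated test function $\psi=\varphi\circ T_\ell$, and then perform the time change of variables $s=\ell^\gamma\tau$. The only cosmetic difference is that the paper writes the fragmentation operator in the form $\int_x^\infty y^{\gamma-1}\kappa(x/y)\,d\mu_s(y)$ and handles the three terms separately, whereas you work directly with the compact form \eqref{ppcm1}; your version is arguably cleaner, and your explicit check that $\psi$ satisfies the growth condition is a point the paper glosses over.
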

\begin{proof}
We set $\mu_t^{\ell}:= T_{\ell} \# \mu_{\ell^{\gamma}t}^{F}$. 
Let us prove 
that $\mu_t^{\ell}$ is a solution to \eqref{eq:frag}  with
initial condition 
$\mu_0^{\ell}=\delta(x-\ell)$ and conclude by uniqueness of the solution.
First, $\mu_0^{\ell} = T_{\ell}\# \mu_0 = T_{\ell}\# \delta(x-1) =\delta(x-\ell)$.
Then, we obtain that for all $\varphi\in \mathcal{C}_c(\R^+)$
\begin{equation*}
 \dst\int_{\R^+} \varphi(x) d\mu_t^{\ell}(x)=  \dst\int_{\R^+} \varphi(x) d\left(T_{\ell}\# \mu_{\ell^{\gamma}t}^{F}\right)(x)
= \dst\int_{\R^+} (\varphi\circ T_{\ell})(x) d \mu_{\ell^{\gamma}t}^{F}(x)
= \dst\int_{\R^+} \varphi(\ell x) d \mu_{\ell^{\gamma}t}^{F}(x).
\end{equation*}
Since $\mu_t$ is a weak solution to \eqref{eq:frag}, we have 
\begin{equation*}\begin{array}{ll}
 \dst\int_{\R^+} \varphi(\ell x) d \mu_{\ell^{\gamma}t}^{F}(x)=&
  \int_{\R^+} \varphi(\ell x) d\mu_0(x)
   \\ \\
  &+\a \dst\int_0^{\ell^{\gamma t}} \dst\int_{\R^+} 
 \left( -x^{\gamma} \varphi(\ell x)d\mu_s^{F}(x)+ \varphi(\ell x)\dst\int_x^{\infty}y^{\gamma-1}  d\kappa\left(\dfrac{x}{y}\right)d\mu_s^{F}(y)\right)ds.
 \end{array}
\end{equation*}
Let us treat each of the three terms of the sum above separately.
The first term is 
\begin{equation*}
  \int_{\R^+} \varphi(\ell x) d\mu_0(x) =  
  \int_{\R^+} \varphi(x) d\mu_0^{\ell}(x).
\end{equation*}
The second term is treated using the change of variables $s=\ell^{\gamma}u$
\begin{equation*}
 -\a \dst\int_0^{\ell^{\gamma} t} \dst\int_{\R^+} 
x^{\gamma} \varphi(\ell x)d\mu_s^{F}(x)ds=
 -\a \dst\int_0^{t} \dst\int_{\R^+} 
\left(x\ell\right)^{\gamma} \varphi(\ell x)d\mu_{\ell^{\gamma}u}^{F}(x)du,
\end{equation*}
and then the change of variables $z=T_{\ell}(x)$ {\it i.e.} 
$d\mu_{\ell^{\gamma}u}^{F}(x)= d\left( T_{\ell} \#\mu_{\ell^{\gamma}u}^{F} \right)(z)$ 
\begin{equation*}
\begin{aligned}
  -\a \dst\int_0^{t} \dst\int_{\R^+} 
\left(x\ell\right)^{\gamma} \varphi(\ell x)d\mu_{\ell^{\gamma}u}^{F}(x)du
&=  -\a \dst\int_0^{t} \dst\int_{\R^+} 
z^{\gamma} \varphi(z)d\left(T_{\ell} \# \mu_{\ell^{\gamma}u}^{F}\right)(z)du\\
&=  -\a \dst\int_0^{t} \dst\int_{\R^+} 
z^{\gamma} \varphi(z)d\mu_u^{\ell}(z)du.\\
\end{aligned}
\end{equation*}
For the third term we also use the change of variables $s=\ell^{\gamma}u$
followed by the change of variables $z=T_{\ell}(x)$ 
and to finish the change of variable $w=T_\ell (y)$ {\it i.e.} $d\mu_{\ell^{\gamma}u}^{F}(y)= d\left( T_{\ell} \#\mu_{\ell^{\gamma}u}^{F} \right)(w)=d\mu_u^\ell(w)$ 
to get 
\begin{equation*}
 \a \dst\int_0^{\ell^{\gamma t}} \dst\int_{\R^+} 
\varphi(\ell x)\dst\int_x^{\infty}y^{\gamma-1}  d\kappa\left(\dfrac{x}{y}\right)d\mu_s^{F}(y)ds
=
\a \dst\int_0^{t} \dst\int_{\R^+} 
\varphi(z)\dst\int_z^{\infty}w^{\gamma-1}  d\kappa\left(\dfrac{z}{w}\right)d\mu_u^{\ell}(w)du.
\end{equation*}
To summarize, 
\begin{equation*}
  \dst\int_{\R^+} \varphi(x) d\mu_t^{\ell}(x) =
   \dst\int_{\R^+} \varphi(x) d\mu_0^{\ell}(x)-\a \dst\int_0^{t} \dst\int_{\R^+} 
 \left(z^{\gamma} \varphi(z)d\mu_u^{\ell}(z)+  
\varphi(z)\dst\int_z^{\infty}w^{\gamma-1}  d\kappa\left(\dfrac{z}{w}\right)d\mu_u^{\ell}(w) \right)du.
\end{equation*}
Finally, since 
$t\to \mu_t^{F}$ is narrowly continuous, then $t\to \mu_t^{\ell}$ is narrowly continuous as well.
This ends the proof of Proposition \ref{prop:rescaling}.
\end{proof}
\MT{
\begin{theorem}[Existence of a solution to \eqref{eq:frag} represented as a power series]
\label{thm:power_serie}
For any  fragmentation kernel $\kappa $
satisfying \eqref{hyp1}, $\gamma\geq 0$ and  $\mu_0$ satisfying \eqref{hyp2},
 there exists a 
 weak solution $\mu_t\in\mathcal{C}(\R^+,\M(\R^+))$ to \eqref{eq:frag}
in the sense of Definition  \ref{def:measure_valued_sol}. This solution is 
given by {\color{vert} the following everywhere convergent series}
\begin{equation}
\label{representation_solution}
  \mu_t = e^{-\a x^{\gamma}t} \mu_0
  +  \sum_{n=0}^{\infty} (\a t)^n
  \dst\int_0^{\infty} \ell^{n\gamma}a_n\left(\dfrac{x}{\ell}\right)\mu_0(\ell) \dfrac{d\ell}\ell{},
 \end{equation}
 where the sequence $a_n$ is defined as follows for $x\in [0,1]$, 
 \begin{equation}
 \label{induction}
  a_0(x)=0, \qquad a_{n+1}(x) = \dfrac{1}{n+1}\left(-x^{\gamma}a_n(x)+ \dst\int_x^{\infty} y^{\gamma-1} \kappa \left(\dfrac{x}{y} \right)a_n(y)dy
  + \kappa(x) \dfrac{(-1)^n}{n!}\right).
 \end{equation}
{\color{black} In particular,
 \begin{equation}
  \label{weakform}
    \dst\int_0^{\infty} \varphi(x) d\mu_t(x)
    =   \dst\int_0^{L}  \dst\int_0^{1} \varphi(\ell x) d \mu_{t\ell^{\gamma}}^{F} (x)d\mu_0(\ell),\qquad \forall \varphi\in \mathcal{C}(\R^+),
\end{equation} 
and 
\begin{equation}
\supp(\mu_t)\subset \supp(\mu_0), \qquad \text{for all } t>0.
\end{equation}
}
\end{theorem}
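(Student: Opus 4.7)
I would prove the theorem in two stages: construct first the \emph{fundamental solution} $\mu_t^F$ corresponding to the Dirac datum $\mu_0 = \delta_1$ through an explicit power-series ansatz, and then recover the solution for a general $\mu_0$ by the rescaling of Proposition \ref{prop:rescaling} and a linear superposition in $\ell$. For the fundamental solution I would seek the form
$$\mu_t^F = e^{-\alpha t}\delta_1 + \sum_{n=0}^{\infty}(\alpha t)^n a_n,\qquad a_n\in \M(\R^+),$$
and insert it into \eqref{eq:frag}. The contribution of the exponential $\delta_1$ term to $\partial_t\mu + \alpha x^\gamma \mu$ vanishes because $x^\gamma = 1$ on $\supp(\delta_1)$, while the nonlocal integral applied to $e^{-\alpha t}\delta_1$ produces the source $\alpha e^{-\alpha t}\kappa(x)$. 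Expanding $e^{-\alpha t} = \sum_n (-\alpha t)^n/n!$ and identifying the coefficient of $(\alpha t)^n$ then reproduces exactly the recursion \eqref{induction} with $a_0 = 0$.

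\textbf{Convergence and weak solution.} By induction on $n$ I would show that $\supp(a_n)\subset [0,1]$ (the first two terms of \eqref{induction} inherit this support from $a_n$, and $\kappa$ is supported there by \ref{hyp1}) and simultaneously bound the TV norms. The two required estimates are: for $\nu\in \M([0,1])$, $\|x^\gamma \nu\|_{TV}\leq \|\nu\|_{TV}$; and the operator $\mathcal{T}\nu(x) := \int_x^\infty y^{\gamma-1}\kappa(x/y)d\nu(y)$ satisfies $\|\mathcal{T}\nu\|_{TV}\leq N\|\nu\|_{TV}$, which follows via the change of variable $z=x/y$ and $\int_0^1 d\kappa = N$. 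Setting $B_n := n!\,\|a_n\|_{TV}$, the recursion gives $B_{n+1}\leq (1+N)B_n + N$, hence $\|a_n\|_{TV}\leq (1+N)^n/n!$. The series therefore converges absolutely in TV for every $t\geq 0$, with sum TV-continuous in $t$; termwise differentiation in $t$ (uniform on compact time intervals) then shows, using the recursion derived above, that $\mu_t^F$ satisfies \eqref{ppcm1}.

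\textbf{General initial data.} Proposition \ref{prop:rescaling} turns $\mu_t^F$ into a solution $\mu_t^\ell := T_\ell\#\mu_{\ell^\gamma t}^F$ for $\mu_0 = \delta_\ell$; the map $\ell\mapsto\mu_t^\ell$ is uniformly TV-bounded on compact time intervals, since $\|T_\ell\#\nu\|_{TV} = \|\nu\|_{TV}$. For a general $\mu_0$ satisfying \ref{hyp2}, I define
$$\mu_t := \int_0^L \mu_t^\ell\,d\mu_0(\ell),$$
which is exactly \eqref{weakform}, is narrowly continuous in $t$ by dominated convergence, and satisfies \eqref{ppcm1} by Fubini and linearity. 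Expanding $\mu_t^\ell$ through the series of the previous step, evaluating $T_\ell\#\delta_1 = \delta_\ell$ and $T_\ell\# a_n$ (``density'' $\ell^{-1}a_n(x/\ell)$), and collecting the $\delta_\ell$-contributions against $d\mu_0(\ell)$ into $e^{-\alpha x^\gamma t}\mu_0$, yields the representation \eqref{representation_solution}. The support inclusion follows from $\supp(\mu_t^\ell)\subset[0,\ell]\subset[0,L]$.

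\textbf{Main obstacle.} The delicate technical point is the derivation of the recursion together with the factorial decay of $\|a_n\|_{TV}$: the TV-absolute convergence (strictly stronger than narrow convergence) of the series on all of $\R^+$ is what legitimizes the termwise verification of the weak formulation, and the uniform-in-$\ell$ TV bound it produces is exactly what makes the Fubini argument for general $\mu_0$ go through without any additional regularity assumption on the initial data.
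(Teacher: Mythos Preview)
Your proposal is correct and follows essentially the same route as the paper: build $\mu_t^F$ via the ansatz $e^{-\alpha t}\delta_1+\sum_n(\alpha t)^na_n$, derive the recursion \eqref{induction}, prove factorial TV-decay of the $a_n$ to justify termwise differentiation, and then superpose the rescaled solutions $\mu_t^\ell$ against $d\mu_0(\ell)$. Your TV bound $\|a_n\|_{TV}\le(1+N)^n/n!$ is in fact slightly sharper than the paper's $(N+2)^n/n!$, and the paper adds only a cosmetic Radon--Nikodym argument to motivate the decomposition $A(t)\delta_1+v_t$ before arriving at the same recursion.
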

}

\begin{remark}
\label{remark1}
We emphasize that in \eqref{induction} and  \eqref{representation_solution}, 
 $\kappa$, $\mu_0$ and $a_n$ may be measures.
 In this case, the Mellin convolution product 
 has to be understood in the sense of measures. 
 For instance, the formula \eqref{representation_solution} means 
 \begin{align}
 &\mu_t =  e^{-\a x^{\gamma}t} \mu_0 
  + \sum_{n=0}^{\infty} (\a t)^n a_n  \ast  b_n \label{remark1E1}\\
 & db_n(y)=y^{n\gamma } d\mu _0(y),  \label{remark1E2}
 \end{align}
 where the Mellin convolution product $\MT{\ast}$ is defined in Definition \ref{def:mult_conv}. 
\end{remark}

\MT{
\begin{proof}[Proof of Theorem \ref{thm:power_serie}]
{\bf Step 1: A power series representation for the fundamental solution.}
In this step, we prove that
\begin{align}
&\mu_t^{F}(x)=e^{-\a t}\delta(x-1) +v_t, \label{shape_u}
\end{align}
where
\begin{align}
v_t= \sum_{n=1}^{\infty} (\a t)^n a_n \in  \mathcal{C}((0,T),\M(\R^+)),  \qquad v_t(0)=0, \qquad v_t\perp \delta (x-1),
\label{serie} 
\end{align}
is a fundamental solution to \eqref{eq:frag}, 
and we prove that for all $t>0$, this solution satisfies
\begin{align}
& \supp(\mu^F_t)\subset[0, 1].
\end{align}
Fix $T>0$. Assume that $\mu_t^F$ is a fundamental solution to \eqref{eq:frag} on $[0,T]\times \R^+$.
We recall that
 $\mu \perp \nu$ if there exists $E \in \mathcal{B}(\R)$ such that 
 $\mu(\R)= \mu( E)$ and $ \nu( E)=0$. 
The Radon-Nikodym decomposition 
guarantees that for all $t>0$, $\mu_t^F$ can be decomposed as 
\begin{equation}
\label{shape_u1}
 \mu_t^{F} = A(t) \delta (x-1) +v_t,
\end{equation}
where $v_t \perp \delta(x-1)$, $A(0)=1$ and $v_0(x)=0$.
 We plug  \eqref{shape_u1} into~\eqref{eq:frag} and get
 \begin{equation*}
  \begin{aligned}
   A'(t) \delta(x-1)+ \p_tv_t(x) &=- A(t) \a x^{\gamma}\delta(x-1)
   - v_t(x) \alpha x^{\gamma} +\\ 
&   + 
   \alpha \dst\int_x^{\infty} y^{\gamma-1} \kappa\left(\dfrac{x}{y} \right)
   \left(A(t) \delta(y-1) +dv_t(y)\right) 
  \end{aligned}
 \end{equation*}
 which is 
 \begin{equation*}
  \begin{aligned}
   A'(t) \delta(x-1)+ \p_tv_t(x) &=- \a A(t) \delta(x-1) 
   - v_t(x) \alpha x^{\gamma} + 
  \alpha \kappa(x) A(t)+  \alpha \dst\int_x^{\infty} y^{\gamma-1} \kappa\left(\dfrac{x}{y} \right)
   dv_t(y) .
  \end{aligned}
 \end{equation*}
By identification, we get that necessarily
\begin{equation}
\label{eq:Aandv}
 \left\{
 \begin{aligned}
  A'(t)&= - \alpha  A(t), \quad A(0)=1, \\
  \p_t v_t(x)&= 
  - \alpha x^{\gamma}v_t(x)+\alpha \dst\int_x^{\infty} y^{\gamma-1} \kappa\left(\dfrac{x}{y}\right)dv_t(y)
  + \alpha \kappa(x) A(t), \quad v_0(x)=0. 
 \end{aligned}
 \right.
\end{equation}
The first line gives $ A(t)= e^{-\a t}$.
This proves that $\mu_t^F$ is necessarily equal to $e^{-\alpha t} +v_t$, where $v_t$ satisfies the second line of \eqref{eq:Aandv}. Now let us verify that the series \eqref{serie} converges in $\mathcal{C}((0,T),\M(\R^+))$.
Since $(\M(\R^+),\|.\|_{TV})$ is a Banach space, it is enough to prove
the normal convergence of the series \eqref{serie}.
We first claim that
\begin{equation}
\label{ineq_serie}
\| a_{n+1} \|_{TV} \leq  \dfrac{1}{n+1} \left( (N+1)\| a_{n} \|_{TV}+ \f{N}{n!} \right),
\end{equation}
This comes directly from the induction formula \eqref{induction} since
$x\in [0,1]$ implies
\begin{equation*}
 \| x^{\gamma} a_{n} \|_{TV} \leq  \|  a_{n} \|_{TV},
\end{equation*}
and
\begin{equation*}
\begin{aligned}
  \| \dst\int_x^{\infty} y^{\gamma-1} \kappa \left(\dfrac{x}{y} \right)a_n(y)dy\|_{TV}
  &= \dst\int_0^{\infty} \left| \dst\int_x^{\infty} y^{\gamma-1} \kappa \left(\dfrac{x}{y} \right)a_n(y)dy \right| dX\\
&\leq \dst\int_0^{\infty}\dst\int_x^{\infty}  \left| y^{\gamma-1} \kappa \left(\dfrac{x}{y} \right)a_n(y)\right|dy  dx\\
&=\dst\int_0^{1}\dst\int_0^{\infty}  \left| y^{\gamma} \kappa \left(z \right)a_n(y)\right|dy  dz\\ 
 &\leq \dst\int_0^{1} \left|\kappa \left(dz \right)\right| \dst\int_0^{\infty} y^\gamma\left|a_n(y)\right|dy  
\leq N \|  a_{n} \|_{TV},
 \end{aligned}
\end{equation*}
and finally
\begin{equation*}
\| \kappa(x) \dfrac{(-1)^n}{n!} \|_{TV} = \f{N}{n!}.
\end{equation*}
We deduce from~\eqref{ineq_serie} that for all $n\in\N,$
\begin{equation}\label{u_n}
\| a_n \|_{TV} \leq \f{(N+2)^n}{n!},
\end{equation}
hence the \Mag{normal} convergence of the series~\eqref{serie} in $(\M(\R^+),\|.\|_{TV})$ for all $t>0$.
We prove~\eqref{u_n} by induction: \eqref{u_n} is true for $n=0$ and $n=1,$ and if it is satisfied for $n\geq 1$ we have
$$\Vert a_{n+1}\Vert_{TV} \leq \f{1}{n+1} \left((N+1)\Vert a_n\Vert_{TV} + \f{N}{n!}\right) \leq \f{1}{(n+1)!} \f{(N+1)(N+2)^{n} + N}{n!} \leq \f{(N+2)^{n+1}}{(n+1)!}.$$
Then the series $v_t$ defined in \eqref{serie} converges in the Banach space $\mathcal{C}((0,T),\M(\R^+)).$
Since from the induction rule  (\ref{induction}) $\supp(a_n)\subset [0, 1]$ for all $n\ge 0$,  it follows that $\supp(v_t)\subset [0, 1]$.
We prove then, using the differentiation rule of power series in a Banach space,
that the power series \eqref{serie} is a solution 
to the second line of \eqref{eq:Aandv}. We have
\begin{equation*}
 \dfrac{d}{dt} v(t,x) =  \dfrac{d}{dt} \sum_{n=1}^\infty (\a t)^n a_n= 
\a \sum_{n=1}^\infty  n  (\a t)^{n-1} a_n 
= \a \sum_{n=0}^\infty (n+1)  (\a t)^n a_{n+1}.
\end{equation*}
Using the induction hypothesis \eqref{induction} we get
\begin{equation*}
\begin{aligned}
 \dfrac{d}{dt} v(t,x) = 
&  \a \sum_{n=0}^\infty  (\a t)^{n}\left(  -x^{\gamma}a_n(x)
+ \dst\int_x^{\infty} y^{\gamma-1} \kappa \left(\dfrac{x}{y} \right)a_n(y)dy
  + \kappa(x) \dfrac{(-1)^n}{n!}\right)\\
  & = - \a  x^{\gamma}  \sum_{n=0}^\infty  (\a t)^{n} a_n 
  +\a \dst\int_x^{\infty} y^{\gamma-1} \kappa \left(\dfrac{x}{y} \right)\left(  \sum_{n=0}^\infty  (\a t)^{n} a_n(y)\right) dy
  +\a   \sum_{n=0}^\infty  (-\a t)^{n}\kappa(x)\\
   & = - \a  x^{\gamma} v(t,x)
  +\a \dst\int_x^{\infty} y^{\gamma-1} \kappa \left(\dfrac{x}{y} \right)v(t,y)dy
  +\a   e^{-\a t} \kappa(x),
\end{aligned}
\end{equation*}
which {\color{vert} is~\eqref{eq:Aandv}.}
The property of the support $\supp(\mu _t^F)\subset [0, 1]$ follows from the hypothesis on the support of $\kappa$ and by inspection of formulas  (\ref{serie}) and (\ref{induction}).
\vspace{0.5cm}
\\
{\bf Step 2: A power series representation of  a solution with a generic initial condition.}
By the classical superposition principle, if  
\begin{equation}
\label{integralmu1}
\mu_t (x)=  \int _0^\infty \mu _0(\ell)\mu _t^\ell\left({\color{vert}{x}} \right)d\ell
\end{equation} 
converges  in $\mathcal{C}((0,T),\M(\R^+))$ 
(where $\mu_t^{\ell} $ is the scaled fundamental solution obtained in Proposition \ref{prop:rescaling} from $\mu_t^F$ the fundamental solution obtained in Step 1),  $\mu _t$ will be  a solution  to the fragmentation equation with 
 initial condition $\mu_0 \in \M(\R^+)$.
Notice that {\color{vert} we have the following equality for} the integral (\ref{integralmu1}): 
\begin{align}
\int_0^{\infty} {\color{vert}\mu_0(\ell) \mu_t^{\ell} \left( {x} \right)} d\ell=e^{-\alpha x^\gamma t}\mu _0+\dst\int_0^{\infty}\sum_{n=0}^{+\infty} (\a t)^n
 \ell^{n\gamma}a_n\left(\dfrac{x}{\ell}\right)\mu_0(\ell) \dfrac{d\ell}\ell{}.\label{seriemu1}
\end{align}
Since for every $n$, by (\ref{u_n})
\begin{align*}
\sum_{n=0}^{m} \left|\left| (\a t)^n
\dst\int_0^{\infty} \ell^{n\gamma}a_n\left(\dfrac{x}{\ell}\right)\mu_0(\ell) \dfrac{d\ell}\ell{}\right|\right| _{ TV }\le  ||\mu _0|| _{ TV }\sum_{n=0}^{m} (\a t)^n||a_n|| _{ TV } \textcolor{black}{L}^{\gamma n} \\
\le  ||\mu _0|| _{ TV } \sum_{n=0}^{m} (\a t)^n \f{(N+2)^n}{n!}\textcolor{black}{L}^{\gamma n} ,
\end{align*}
the series in the right-hand side of (\ref{seriemu1}) converges absolutely in the Banach space $\mathcal{C}((0,T),\M(\R^+))$ for all $T>0$. The integral (\ref{integralmu1}) is then absolutely convergent  and defines a solution to the fragmentation equation with 
 initial condition $\mu_0 \in \M(\R^+)$ and for $t\in[0,T]$.  Property  (\ref{weakform}) follows then from the definition of $\mu^F  _{ t\ell^\gamma}$. Since $\supp(a_n)\subset [0, 1]$ for every $n\ge 0$, it follows from  (\ref{representation_solution}) that $\supp(\mu _t)\subset [0, \textcolor{black}{L}]$ for all $t>0$.
 Using a classical diagonal argument, and since the property on the support of the solution does not depend on $T$, the power series defines a solution in $\mathcal{C}({\R}^+,\M(\R^+))$
 This ends the proof of Theorem \ref{thm:power_serie}].
\end{proof}
}

\MT{
\begin{theorem}[Non negativity of the power series solution]
\label{thm:positivity}
Assume the fragmentation kernel $\kappa$ satisfies \eqref{hyp1} with $\gamma \geq 0$ and take $\mu_0$ that satisfies \eqref{hyp2}. Then, the power series solution \eqref{representation_solution} to the fragmentation equation \eqref{eq:frag} is non-negative.
\end{theorem}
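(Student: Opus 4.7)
My plan is to construct a non-negative weak solution to \eqref{eq:frag} by iterating the Duhamel (mild) form of the equation, and then to identify this solution with the power series of Theorem \ref{thm:power_serie} through a compact-support uniqueness argument.

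I would start from the mild formulation
\begin{equation*}
\mu_t = e^{-\alpha x^\gamma t}\mu_0 + \alpha\int_0^t e^{-\alpha x^\gamma(t-s)}\Bigl(\int_{\R^+} y^{\gamma-1}\,\kappa\!\left(\tfrac{x}{y}\right)d\mu_s(y)\Bigr)ds,
\end{equation*}
setting $\nu_t^{(0)} := e^{-\alpha x^\gamma t}\mu_0$ and defining the iterates
\begin{equation*}
\nu_t^{(n+1)}(x) := \alpha\int_0^t e^{-\alpha x^\gamma(t-s)}\Bigl(\int_{\R^+} y^{\gamma-1}\,\kappa\!\left(\tfrac{x}{y}\right)d\nu_s^{(n)}(y)\Bigr)ds.
\end{equation*}
Each of the three operations involved (multiplication by the non-negative function $e^{-\alpha x^\gamma(t-s)}$, integration against the positive measure $\kappa$, and time integration) preserves non-negativity; since $\kappa$ is supported in $[0,1]$, each $\nu_t^{(n)}$ is also supported in $[0,L]$. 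An induction parallel to the one leading to \eqref{u_n} yields the factorial bound $\|\nu_t^{(n)}\|_{TV}\leq \|\mu_0\|_{TV}(\alpha N L^\gamma t)^n/n!$, so $\tilde\mu_t := \sum_{n\geq 0}\nu_t^{(n)}$ converges normally in $\mathcal{C}(\R^+,(\M(\R^+),\|\cdot\|_{TV}))$. The limit is a non-negative compactly supported measure satisfying the mild form by termwise passage to the limit; differentiating in $t$ (justified by uniform TV-convergence) together with a Fubini exchange shows that $\tilde\mu_t$ is a weak solution of \eqref{eq:frag} in the sense of Definition \ref{def:measure_valued_sol}.

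To conclude, both the power series solution $\mu_t$ of Theorem \ref{thm:power_serie} and the Duhamel solution $\tilde\mu_t$ share the initial datum $\mu_0$, are supported in $[0,L]$, and satisfy the weak formulation. Their difference $d_t := \mu_t - \tilde\mu_t$ is therefore a signed, compactly supported Radon measure with $d_0 = 0$ that still satisfies the linear weak formulation. The cutoff-test-function argument from the proof of Theorem \ref{thm:WP_BV} (inserting $\chi\varphi$ with $\chi\equiv 1$ on $[0,L]$ and $\|\varphi\|_\infty\leq 1$, then applying Gronwall) yields $\|d_t\|_{TV}\leq \alpha(2L)^\gamma(1+N)\int_0^t \|d_s\|_{TV}\,ds$, hence $d_t\equiv 0$ and $\mu_t = \tilde\mu_t \geq 0$.

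The main obstacle lies in this last step: Theorem \ref{thm:WP_BV} is stated for non-negative measures, because non-negativity was used there to derive the support inclusion \eqref{pgcd11}. Here, however, the support of $d_t$ is already known a priori, so the remainder of the Gronwall estimate from that proof applies verbatim to signed measures. A second, milder technical point is the equivalence between the mild and the weak formulation on compactly supported, TV-continuous families, which is a standard verification based on Fubini's theorem.
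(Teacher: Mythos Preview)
Your argument is correct and takes a genuinely different route from the paper's.

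The paper proceeds by a double approximation: it mollifies the kernel ($\kappa_m=\kappa*\theta_m$), replaces the Dirac initial datum by $\theta_k$, and truncates the rate $x^\gamma$ by a cutoff $\chi$. For the fully regular problem it invokes Melzak's classical existence/uniqueness/positivity result (Lemmas~3--4 of \cite{Melzak}) to obtain $u_{k,m}\ge 0$, then passes to the limit $k\to\infty$ (weak convergence to $u_m$) and $m\to\infty$ (BL-convergence of the $a_{n,m}$ to $a_n$) to conclude $\mu_t^F\ge 0$; the general $\mu_0$ case follows from the representation \eqref{weakform}.

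Your Duhamel/Dyson iteration is more self-contained: non-negativity is built in term by term, no external reference is needed, and no regularization is required. The only delicate point you flag---that the Gronwall argument behind \eqref{pgcd2} applies to the signed difference $d_t$ once $\supp(d_t)\subset[0,L]$ is known a priori---is accurate: in the proof of Theorem~\ref{thm:WP_BV} non-negativity is used solely to derive \eqref{pgcd11}, while the TV estimate itself only uses the compact support to bound $\psi$. The paper's approach, by contrast, avoids having to revisit this uniqueness argument at all (positivity is established before uniqueness is invoked) and stays closer to the classical fragmentation literature, at the cost of a heavier two-parameter limiting procedure.
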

\begin{remark}
Up to our knowledge, no proof of positivity for the fragmentation equation is available in the literature in the case where either the initial condition $\mu_0$ is a measure or the fragmentation kernel $\kappa$ is a measure. In our case, both are measures.
\end{remark}
}
\MT{
\begin{proof}
 We first prove the non negativity of the fundamental solution $\mu_t^F$ defined by \eqref{shape_u} \eqref{serie} using an approximation argument. Consider to this end  the function $\chi$ such that:
\begin{align*}
&\chi\in \mathcal C(0, \infty),\,\,\chi\ge 0,\,\,||\chi||_\infty=1, \quad \chi(x)=1\,\,\,\forall x\in [0, 2L];\quad 
\chi(x)=0\,\,\forall x>3L,
\end{align*}
and a  mollifier
$\theta\in \mathcal C(0, \infty)$, such that $\theta \ge 0$, $\supp \theta\subset [0, 1]$
and the sequence $\theta _m(x)=m\theta (m(x-1))$. Let us then denote by $\kappa_m$ a regularization of the fragmentation kernel
\begin{equation*}
\kappa _m=\kappa \ast \theta_m,
\end{equation*}
by $a$ a regularization of the fragmentation rate
\begin{equation*}
    a(x)=x^{\gamma}\chi(x),
\end{equation*}
and by $\theta_k$ a regularization of the initial condition $\delta(x-1)$.
By construction $||a||_\infty \le (3L)^\gamma$ and for each $m\ge 1$, $\kappa _m$ is a regular function 
satisfying  
\begin{align}
\supp[\kappa_m]\subset [0, 2], \quad \lim_{m\to \infty}||\kappa _m-\kappa||_{BL}=0, \quad 
||\kappa _m||_{TV}\le ||\MT{\kappa}||_{TV}||\theta_m||_{TV}\le N. 
\label{kmBL2}
\end{align}
Consider for every  $m\ge 1$ the sequence of functions $\{a_{n, m}\}_{n\in \N, m\in \N}$
defined as follows, 
\begin{align}
\label{inductionB}
& a_{0, m}(x)=0,\nonumber\\
&a_{n+1, m}(x) = \dfrac{1}{n+1}\left(-a(x)a_{n, m}(x)+ \dst\int_{\MT{0}}^{\infty} a(y) \kappa_m \left(\dfrac{x}{y} \right)a_{n, m}(y)\MT{\dfrac{dy}{y}}
+ \kappa_m(x) \dfrac{(-1)^n}{n!}\right).
\end{align}
 It immediately follows, for all $n\ge 1, m\ge 1$,
 \begin{align*}
 a_{n, m}\in \mathcal C([0, \infty)),\,\,\supp[a_{n, m}]\subset[0, 2]
 \end{align*} 
 and then, the sequence  $\{a_{n, m}\}_{n\in \N, m\in \N}$ satisfies also,
 \begin{align}
\label{inductionC}
a_{n+1, m}(x) = \dfrac{1}{n+1}\left(-x^{\gamma}a_{n, m}(x)+ \dst\int_{\MT{0}}^{\infty} y^{\gamma} \kappa_m \left(\dfrac{x}{y} \right)a_{n, m}(y)\frac{dy}{y}
+ \kappa_m(x) \dfrac{(-1)^n}{n!}\right).
\end{align}
Since $\supp[a_{n, m}]\subset [0, 2]$ it follows that 
\begin{equation*}
    ||x\to x^\gamma a_{n, m}(x)||_{TV}\le 2^{\gamma}||a_{n, m}||_{TV}.
\end{equation*}. 
Then, for every $m\ge 1$ fixed, as for the proof of (\ref{u_n})  it follows now that
 \begin{equation}
 \label{u_nm}
\| a_{n, m} \|_{TV} \leq \f{2^{\gamma n}(N+2)^n}{n!},\,\,\forall n\ge 1, \, \forall m\ge 1.
\end{equation}
\\
{\bf Step 1: The solution $u_{km}$ to the regularized problem is non negative.}
Consider the regularized problem ($m<\infty$ and $k<\infty$)
\begin{equation}
\label{pgcd1}
\left\{
\begin{aligned}
&\dfrac{\p}{\p t} u_{k, m}(t, x) =-\a a(x) u_{k, m}(t, x) + \alpha \dst\int_{x}^{\infty}\kappa_m\left(\dfrac{x}{y} \right)a(y)  u_{k, m}(t, y)\frac{dy}{y},\\
&\MT{u_{k,m}(0,x)=\theta_k(x)}. 
\end{aligned}
\right.
\end{equation}
We define for $k\geq 1$ and $m\geq 1$ the sequence of functions
\begin{align}
\label{pgcd8C}
 u_{k, m}(t, x) = e^{-\a x^{\gamma}t} \theta_k +  
\sum_{n=0}^{\infty} (\a t)^n
\dst\int_0^{\infty} \ell^{n\gamma}a_{n, m}\left(\dfrac{x}{\ell}\right) \theta_k(\ell) \dfrac{d\ell}\ell{},
\end{align} 
For $k\geq 1$ and $m\geq 1$, the series is absolutely convergent since
Moreover, by construction $\supp[u_{k, m}]\subset [0, 2]$. Then, $u_{k, m}$ satisfies \eqref{pgcd1}.
By Lemma 3 of \cite{Melzak}, of which  the equation (\ref{pgcd1}) and the initial data $\theta_k$ satisfy the hypothesis, the Cauchy problem for (\ref{pgcd1}) with initial data $\theta_k$ possesses a global solution bounded, continuous, non negative, analytic in $t$ for each $x>0$ and integrable in $x$ for every $t>0$. Moreover, by construction, for all $T>0$ and $t\in (0, T)$,
\begin{equation*}
    \begin{aligned}
        \dst\int|u_{km}(t,x)| dx& =
      \dst\int_0^{_infty} e^{-\alpha x^{\gamma}t}\theta_k(x)dx+ \dst\int_0^{\infty}\sum_{n=0}^{\infty} (\a t)^n
\dst\int_0^{\infty} \ell^{n\gamma}\Big|a_{n, m}\left(\dfrac{x}{\ell}\right)\Big| \theta_k(\ell) \dfrac{d\ell}\ell dx\\  
&\leq 1+  \sum_{n=0}^{\infty} (\a t)^n 2^{n\gamma} \dfrac{(N+2)^n}{n!}<\infty.
    \end{aligned}
\end{equation*}
 Therefore, by Lemma 4 in \cite{Melzak}, the function $u_{k,m} $ is the unique
 solution of \MT{\eqref{pgcd1}}
 that satisfies   (\ref{mltzk}). 
 \MT{Moreover, this solution is non negative.}
 \vspace{0.5cm}
 \\
 {\bf Step 2: limit $k\to \infty$.}
 Consider the problem ($m<\infty$)
 \begin{equation}\label{eq:fragReg}
 \begin{aligned}
&\dfrac{\p}{\p t} u_{m}(t, x) =-\a a(x) u_{m}(t, x) + \alpha \dst\int_{x}^{\infty}\kappa_m\left(\dfrac{x}{y} \right)a(y)  u_{m}(t, y)\frac{dy}{y},\\ 
&u_{m}(0, x) = \delta (x-1).\
\end{aligned}
\end{equation}
We define the sequence of measures $u_m$ as
\begin{align}
\label{pgcd8}
 u_{m}(t, x) = e^{-\a t} \delta (x-1) +  
\sum_{n=0}^{\infty} (\a t)^n a_{n, m}
\end{align}
The series in (\ref{pgcd8}) is absolutely convergent in $TV$ norm  for every $m\ge 1$ and it defines a measure $u_m\in \mathcal M(\R_+)$ 
such that
\begin{align}
||u_m\MT{(t)}||_{TV}\le \exp( \alpha t 2^{\gamma }(N+2))\MT{+e^{-\alpha t}}.
\end{align}
\\
The measure $u_{m}\MT{(t,.)}$ satisfies \eqref{eq:fragReg}
but since $\supp[u_m(t)]\subset [0, 2]$ it also satisfies,
\begin{align*}
&\dfrac{\p}{\p t} u_{m}(t, x) =-\a  x^{\gamma} u_{m}(t, x) + \alpha \dst\int_{x}^{\infty}\kappa_m\left(\dfrac{x}{y} \right) y^{\gamma-1}  u_{m}(t, y)dy.
\end{align*}
\\
We claim now that $(u_{km})_{k\geq 0}$ converges weakly towards $u_m$.
Indeed, on one hand, for all $\varphi \in \mathcal{C}_C(\R^+)$, 
\begin{equation}
\label{fleur1}
\lim\limits_{k\to \infty} \dst\int_0^{\infty} e^{-\alpha x^{\gamma} t} \theta_k(x) \varphi(x)dx = e^{-\alpha t} \varphi(1),
\end{equation}
and on the other hand, let us
notice that  for each $n\ge 1$ and $m\ge 1$ fixed,
\begin{equation*}
\lim_{k\to \infty} \int_0^{\infty} \ell^{n\gamma}a_{n, m}\left(\dfrac{x}{\ell}\right) \theta_k(\ell) \dfrac{d\ell}\ell{}=a_{n, m}(x), \quad x\in [0,2].
\end{equation*}
Moreover, for all $k\ge 1$, $n\ge 1$ and  $m\ge 1$
\begin{align*}
\int_0^{\infty} \ell^{n\gamma}a_{n, m}\left(\dfrac{x}{\ell}\right) \theta_k(\ell) \dfrac{d\ell}\ell{}=
\int _0^\infty \left(\frac{x}{y}\right)^{n\gamma-1}\!\!\!a_{n, m}(y)k\, \theta\left(k\frac{x}{y}\right) dy.
\end{align*}
Therefore, if
\begin{align*}
\psi _k (z)= z^{n\gamma-1}k \theta (k z)
\end{align*}
then $\psi _k\in \mathcal C([0, \infty)$ and  $\psi _k\ge 0$. Since $\supp \theta \subset [0, 1]$ and $||\theta||_\infty\le 1$, for all $n$ such that $n\gamma >2$,
\begin{align*}
\sup_{z>0}\psi _k(z)=\sup_{z>0 }z^{n\gamma-1}k\theta(k z)=
\sup_{z\in [0, k^{-1}]}z^{n\gamma-2}(kz)\theta(k z)\le 1
\end{align*}
It follows that for all $m\ge 1$, $n\ge 1$, $k\ge 1$ and $x>0$,
\begin{align*}
\left|\int_0^{\infty} \ell^{n\gamma}a_{n, m}\left(\dfrac{x}{\ell}\right) \theta_k(\ell) \dfrac{d\ell}\ell{}\right|
\le ||a_{n, m}||_{TV}
\end{align*}
It follows by the Lebesgue's convergence that 
\begin{equation}
    \label{fleur2}
    \lim\limits_{k\to \infty} \sum_{n=0}^{\infty} (\a t)^n
\dst\int_0^{\infty} \ell^{n\gamma}a_{n, m}\left(\dfrac{x}{\ell}\right) \theta_k(\ell) \dfrac{d\ell}\ell{} = \sum_{n=0}^{\infty} (\a t)^na_{n,m}(x), \quad x\in[0,2],
\end{equation}
and then, combining \eqref{fleur1} with \eqref{fleur2} gives us
\begin{align*}
\lim_{k\to \infty } \int _0^\infty \varphi (x) u_{k, m}(t, x)dx
=\int _0^\infty \varphi (x) u_m(t, x)dx,\quad \varphi\in \mathcal C_C(\R^+),
\end{align*}
and then 
\begin{equation}
    u_m\ge 0.
\end{equation}
\\
 {\bf Step 3: limit $m\to \infty$.}
 We prove here that $u_m$ converges weakly towards $\mu_t^F$.
 To do so, we prove by induction that
 \begin{equation}
     \|a_{n,m}-a_n\|_{BL}\underset{m\to \infty}{\to} 0.
 \end{equation}
For $n=1$, $a_{1, m}=\kappa _m$, $a_1=\kappa$, and by construction   
$||\kappa _m-\kappa ||_{BL} \underset{m\to \infty}{ \to} $.  Assume then  $||a_{n, m}-a_n||_{BL}\to 0$. In order to prove that the same property holds for the sequence $\{a_{n+1, m}\}_{m\in \N}$ it is sufficient to prove
\begin{align}
\label{convmeme}
\lim_{m\to \infty}\left|\left|\int_x^{\infty} y^{\gamma-1} \kappa_m \left(\dfrac{x}{y} \right)a_{n, m}(y)dy
 - \int_x^{\infty} y^{\gamma-1} \kappa \left(\dfrac{x}{y} \right)a_{n}(y)dy\right|\right|_{BL}= 0.
\end{align} 
If, for the sake of notation we define the functions  $\tilde a_{n, m}$ and $\tilde a_{n}$ as
\begin{align*}
&\tilde  a_{n, m}(\ell)=\ell^{\gamma}a_{n, m}(\ell),\qquad
\tilde  a_{n}(\ell)=\ell^{\gamma}a_{n}(\ell),
\end{align*}
then property (\ref{convmeme}) reads,
\begin{align}
\label{convmemeB}
\lim_{m\to \infty}\left|\left|\tilde a_{n, m}\ast \kappa _m-  \tilde a_{n}\ast \kappa \right|\right|_{BL}= 0.
\end{align}
Notice indeed that, for all test function $\varphi $ such that $||\varphi ||_\infty\le 1$ and $||\varphi '||_\infty\le 1$,
\begin{align}
\label{comvmeme2}
\int_0^\infty \varphi (x)
\left (\tilde a_{n, m}\ast \kappa _m(x)- \tilde a_{n}\ast \kappa(x)\right)dx=
\int_0^\infty \varphi (x)
\left (\tilde a_{n, m}- \tilde a_{n}\right)\ast \kappa _m(x)dx+\nonumber\\
+\int_0^\infty \varphi (x)
\left (\kappa _m- \kappa \right)\ast\tilde a_{n}(x)dx
\end{align}
\\
The two terms in the right-hand side of (\ref{comvmeme2}) may be bounded with the same arguments. Consider for example the first.
\begin{align*}
\left|\int_0^\infty \varphi (x)
\left (\tilde a_{n, m}- \tilde a_{n}\right)\ast \kappa _m(x)dx\right|=\left|\int _0^\infty \varphi (x)\int _0^\infty
\left (\tilde a_{n, m}- \tilde a_{n}\right)\left( \frac{x}{y}\right)\kappa _m(y)\frac{dy}{y}dx\right|\\
=\left|\int _0^2 \kappa _m(y) \int _0^\infty
\left (\tilde a_{n, m}- \tilde a_{n}\right)\left( \frac{x}{y}\right)\varphi (x)dx\frac{dy}{y}\right|\\
\le ||\kappa _m||_{ TV }
\sup_{y \in[0, 2]}
\left|\int _0^\infty
\left (\tilde a_{n, m}- \tilde a_{n}\right)\left( z\right)\varphi (zy)dz \right|
\end{align*}
For each $y\in[0, 2]$,
\begin{align*}
\left|\int _0^\infty
\left (\tilde a_{n, m}- \tilde a_{n}\right)\left( z\right)\varphi (zy)dz \right|\le ||\tilde a_{n, m}- \tilde a_{n}||_{BL}
\left(||\varphi ||_\infty+2||\varphi'||_\infty \right)\end{align*}
from where, by (\ref{kmBL2})
\begin{align*}
 \left|\left|
\left (\tilde a_{n, m}- \tilde a_{n}\right)\ast \kappa _m\right|\right|_{BL}
\le 3 N
||\tilde a_{n, m}- \tilde a_{n}||_{BL}.
\end{align*}
A similar arguments shows, using (\ref{u_n}),
\begin{align*}
\left|\left|
\left (\kappa _m- \kappa \right)\ast \tilde a_n \right|\right|_{BL}\le 
3 ||k_m-k||_{BL }||\tilde a_n||_{TV}\le
3 \f{(N+2)^n}{n!}||k_m-k||_{BL } 
\end{align*}
and then, (\ref{convmeme}) holds true.
\\
Now, for  any $\varphi\in \mathcal C^1([0, \infty))$, such that $||\varphi ||_\infty+||\varphi '||_\infty <\infty$, by definition of the measure $u_m$
\begin{align*}
\int _0^\infty u_m(t, x)\varphi (x)dx&=
e^{-\alpha t}\varphi (1)+\int _0^\infty 
\left (  \sum_{n=0}^{\infty} (\a t)^n a_{n, m}(x)\right) \varphi (x)dx\\
&=e^{-\alpha t}\varphi (1)+ \sum_{n=0}^{\infty} (\a t)^n \int _0^\infty a_{n, m}(x) \varphi (x)dx.
\end{align*}
Since for every $n\ge 1$, 
\begin{equation*}
\lim _{ m\to \infty } \int _0^\infty 
a_{n, m}(x) \varphi (x)dx=\int _0^\infty a_{n}(x) \varphi (x)dx
\end{equation*}
and 
\begin{align*}
(\a t)^n \int _0^\infty |a_{n, m}(x) \varphi (x)|dx& \le (\alpha t)^n||\varphi ||_\infty ||a_{n, m}||_{TV}\\
&\le (\alpha t)^n||\varphi ||_\infty\f{2^{\gamma n}(N+2)^n}{n!}
\end{align*}
one has,
\begin{align*}
\lim _{ m\to \infty } \int _0^\infty\varphi (x)u_{m}(t, x)dx&=
e^{-\alpha t}\varphi (1)+ \sum_{n=0}^{\infty} (\a t)^n \int _0^\infty a_{n}(x) \varphi (x)dx=\int _0^\infty\varphi (x)d\mu _t^F
\end{align*}
and it follows 
\begin{equation*}
    \mu _t^F\ge 0.
\end{equation*}
\\
This ends the proof of Theorem \ref{thm:positivity}.
\end{proof}
}

\MT{
The following corollary  follows now easily  from Theorem \ref{thm:WP_BV}, Theorem (\ref{thm:power_serie}), Theorem \ref{thm:positivity} and Proposition (\ref{prop:rescaling}).
\begin{corollary}[Well-posedness of the fragmentation equation]
\label{cor:summary}
Assume the fragmentation kernel $\kappa$ satisfies \eqref{hyp1} with $\gamma \geq 0$ and take $\mu_0$ that satisfies \eqref{hyp2}.
Then, there exists a unique non-negative solution $\mu_t$ to the fragmentation equation in $\mathcal{C}(\R^+,\M(\R^+))$ that has a finite TV norm.
Moreover, this solution satisfies
\begin{equation*}
\supp(\mu_t) \subset \supp(\mu_0)
\end{equation*}
and can be represented as the power series \eqref{representation_solution}.
\end{corollary}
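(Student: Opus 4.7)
The plan is to assemble the three previously established theorems into a single well-posedness statement, each handling one aspect of the claim.

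\textbf{Existence and power series representation.} First, I would invoke Theorem~\ref{thm:power_serie}, which under assumptions \eqref{hyp1} and \eqref{hyp2} with $\gamma \geq 0$ directly produces a weak solution $\mu_t \in \mathcal{C}(\R^+,\M(\R^+))$ to \eqref{eq:frag} in the sense of Definition~\ref{def:measure_valued_sol}, given explicitly by the power series \eqref{representation_solution}. The finiteness of the TV norm follows at once from the same proof: the bound \eqref{u_n} on $\|a_n\|_{TV}$ combined with $\supp(a_n)\subset[0,1]$ and $\supp(\mu_0)\subset[0,L]$ gives the estimate
\begin{equation*}
 \|\mu_t\|_{TV} \leq \|\mu_0\|_{TV}\Bigl(e^{-\alpha t}+ \sum_{n=0}^{\infty} (\alpha t)^n L^{n\gamma} \tfrac{(N+2)^n}{n!}\Bigr) = \|\mu_0\|_{TV}\bigl(e^{-\alpha t} + e^{\alpha t L^\gamma (N+2)}\bigr),
\end{equation*}
so $\mu_t$ belongs to $\M(\R^+)$ endowed with the TV norm for every $t\geq 0$.

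\textbf{Non-negativity.} Next, I would apply Theorem~\ref{thm:positivity}, which shows precisely that the power series solution \eqref{representation_solution} is non-negative. Note that this step is where the regularisation-and-limit argument of Theorem~\ref{thm:positivity} is crucial: the partial sums of the series are signed measures in general, and only the total sum is non-negative.

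\textbf{Uniqueness.} Having obtained a non-negative solution in $\mathcal{C}(\R^+,\M(\R^+))$ with finite TV norm, Theorem~\ref{thm:WP_BV} applies and yields uniqueness in this class.

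\textbf{Support.} Finally, for the support property, I would read off \eqref{representation_solution}: the first term $e^{-\alpha x^{\gamma} t}\mu_0$ is obviously supported in $\supp(\mu_0)$, and since $\supp(a_n)\subset[0,1]$ for all $n$, the measure $\ell^{n\gamma} a_n(x/\ell)$ is supported in $x\in[0,\ell]$ for each $\ell\in\supp(\mu_0)$; integrating against $\mu_0(\ell)$ confines the result to $[0,\sup \supp(\mu_0)]$, which in particular gives $\supp(\mu_t)\subset[0,L]=\supp(\mu_0)$ when $\mu_0$ is spread over $[0,L]$ (and more generally by the same pointwise argument applied to any initial support set).

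The assembly itself is routine; the only point requiring mild care is the verification of the TV bound for the power series solution, so as to place it in the uniqueness class of Theorem~\ref{thm:WP_BV}. All other ingredients are direct citations of the preceding results and Proposition~\ref{prop:rescaling}.
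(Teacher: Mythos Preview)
Your proposal is correct and matches the paper's approach: the paper offers no separate proof of Corollary~\ref{cor:summary}, stating only that it ``follows now easily from Theorem~\ref{thm:WP_BV}, Theorem~\ref{thm:power_serie}, Theorem~\ref{thm:positivity} and Proposition~\ref{prop:rescaling}'', and you have spelled out exactly that assembly. One trivial slip: in your TV bound the first term $e^{-\alpha x^\gamma t}\mu_0$ should be bounded by $\|\mu_0\|_{TV}$ rather than $e^{-\alpha t}\|\mu_0\|_{TV}$ (since $x$ may be small), but this only strengthens the estimate and does not affect the argument.
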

}
Let us provide two cases where we have explicit formulations for the fundamental solution to \eqref{eq:frag} for $\mu_0=\delta(x-1)$.
\begin{example}
For \MT{$\alpha=\gamma=1$} and $\kappa= 2\mathbb{1}_{[0,1]}$, 
we have \cite[\MT{formula 11}]{ZiffMcGrady85}
\begin{equation*}
\mu_t^F(x) = e^{-t} \delta(x-1) + (\MT{2}t+(1-x)t^2)e^{-xt},
\end{equation*}
\end{example}
\begin{example}
For \MT{$\alpha=1,\;\gamma=0,$ and} $\kappa(z)=2 \delta(z-1/2)$ we have \cite[\MT{Proposition 1}]{DvB18}
\begin{equation*}
\mu_t^F(x)=e^{-t} \delta(x-1) + \MT{e^{-t}} \sum_{k=1}^{\infty} \dfrac{(4t)^k}{k!}\delta\left(x-\dfrac{1}{2^k}\right).
\end{equation*}
\end{example}
In both examples, the mass initially located at $x=1$ decreases exponentially with respect to time and 
is teleported on $(0,1)$.

\textcolor{black}{The stability of the solution with respect to the TV norm has been proved in Theorem \ref{thm:WP_BV}. The stability in the BL norm 
is deduced now from the explicit expression provided by Theorem~\ref{thm:power_serie}.}
\begin{theorem}[Stability of the fragmentation equation in $(\mathcal{M}(\R^+),\|.\|_{BL})$]
\label{thm:stabBL}
  Assume $\kappa$ satisfies \eqref{hyp1},  
 $\mu_0 \in \M^+(\R^+)$ satisfies \eqref{hyp2}\comMD{, and moreover either $\gamma\geq 1$ or $\supp (\mu_0)\subset [m,M]$ with $m>0$}.
Then the unique solution $\mu_t$ to the fragmentation equation \eqref{eq:frag} satisfies
\begin{equation*}
 \|\mu_t\|_{BL}
 \leq C(L,N,T,\alpha,\MT{\gamma,m^{\gamma-1}}) \; \|\mu_0\|_{BL}, \qquad 0 \leq t \leq T,
 \end{equation*}
\end{theorem}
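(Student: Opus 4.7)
The starting point is the explicit series representation provided by Theorem~\ref{thm:power_serie} together with Remark~\ref{remark1}: with $db_n(y)=y^{n\gamma}d\mu_0(y)$ we may write
\begin{equation*}
\mu_t \;=\; e^{-\alpha x^{\gamma}t}\mu_0 \;+\; \sum_{n=1}^{\infty}(\alpha t)^n\, a_n\ast b_n ,
\end{equation*}
where the series converges in $\mathcal{C}([0,T],(\mathcal{M}(\R^+),\|\cdot\|_{TV}))$. The plan is to estimate the BL norm of each summand in terms of $\|\mu_0\|_{BL}$ and sum the resulting series. The proof splits into three elementary bricks: a ``Mellin convolution'' inequality, a bound on the auxiliary measures $b_n$ in BL norm, and a bound on the exponential factor $e^{-\alpha x^{\gamma}t}\mu_0$.

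\textbf{Mellin convolution in BL norm.} For any test function $\varphi$ with $\|\varphi\|_{\infty}\le 1$ and $\|\varphi'\|_{\infty}\le 1$ and any $x\in[0,1]$, the function $y\mapsto \varphi(xy)$ satisfies $\|\varphi(x\,\cdot\,)\|_{\infty}\le 1$ and has Lipschitz constant $x\le 1$. Since $\supp(a_n)\subset[0,1]$, Fubini then gives
\begin{equation*}
\Big|\int \varphi\,d(a_n\ast b_n)\Big| \;=\; \Big|\int\!\!\int \varphi(xy)\,da_n(x)\,db_n(y)\Big| \;\le\; \|a_n\|_{TV}\,\|b_n\|_{BL}.
\end{equation*}
Combined with the bound $\|a_n\|_{TV}\le (N+2)^n/n!$ already established in~\eqref{u_n}, this reduces the problem to estimating $\|b_n\|_{BL}$.

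\textbf{BL bound on $b_n$ and on $e^{-\alpha x^{\gamma}t}\mu_0$.} Given $\varphi$ with $\|\varphi\|_{\infty},\|\varphi'\|_{\infty}\le 1$, introduce a smooth cutoff $\chi$ equal to $1$ on $\supp(\mu_0)$ and supported in a slight enlargement (in $[0,L+1]$ when $\gamma\ge 1$, and in $[m/2,M+1]$ when $\gamma<1$ and $\supp(\mu_0)\subset[m,M]$), so that
\begin{equation*}
\int\varphi\,db_n \;=\; \int \psi_n(y)\,d\mu_0(y),\qquad \psi_n(y)=y^{n\gamma}\varphi(y)\chi(y).
\end{equation*}
Differentiation yields $\psi_n'(y)=n\gamma y^{n\gamma-1}\varphi(y)\chi(y)+y^{n\gamma}\varphi'(y)\chi(y)+y^{n\gamma}\varphi(y)\chi'(y)$, whose sup norm, together with $\|\psi_n\|_{\infty}$, is bounded by $C_\ast(1+n)R^{n\gamma}$ for constants $C_\ast,R$ depending only on $L$, $M$, $m$, $\gamma$. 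Hence $\|b_n\|_{BL}\le C_\ast(1+n)R^{n\gamma}\|\mu_0\|_{BL}$. The same cutoff applied to $\psi(x)=\varphi(x)e^{-\alpha x^{\gamma}t}\chi(x)$, whose only potentially singular derivative contribution is $\alpha\gamma t\,x^{\gamma-1}e^{-\alpha x^{\gamma}t}$, delivers $\|e^{-\alpha x^{\gamma}t}\mu_0\|_{BL}\le C_0\|\mu_0\|_{BL}$ with $C_0=C_0(L,\alpha,\gamma,T,m^{\gamma-1})$. Putting everything together,
\begin{equation*}
\|\mu_t\|_{BL} \;\le\; \Big(C_0 + \sum_{n=1}^{\infty}(\alpha T)^n\frac{(N+2)^n}{n!}\,C_\ast(1+n)R^{n\gamma}\Big)\|\mu_0\|_{BL},
\end{equation*}
and the series converges by comparison with $\sum (A^n/n!)(1+n)$.

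\textbf{Main obstacle.} The delicate point is the control of $y^{n\gamma-1}$ arising when differentiating $\psi_n$: this factor is singular at $y=0$ whenever $n\gamma<1$. The dichotomy in the hypothesis is exactly what removes this singularity. If $\gamma\ge 1$ then $n\gamma-1\ge 0$ for every $n\ge 1$ and $y^{n\gamma-1}\le (L+1)^{n\gamma-1}$ is harmless; if instead $\gamma\in(0,1)$ and $\supp(\mu_0)\subset[m,M]$ with $m>0$, the cutoff may be chosen to vanish on $[0,m/2]$, so that $y^{n\gamma-1}\le (2/m)\,y^{n\gamma}\le (2/m)(M+1)^{n\gamma}$ on the support of $\psi_n$, which is precisely where the $m^{\gamma-1}$ dependence surfaces in the final constant (and why that parameter appears in the statement).
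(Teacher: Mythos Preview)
Your argument is correct and takes a genuinely different route from the paper. The paper does not expand the series term by term; instead it invokes the superposition formula~\eqref{weakform} to write
\[
\int\varphi\,d\mu_t=\int_0^L\Psi(\ell)\,d\mu_0(\ell),\qquad \Psi(\ell)=\int_0^1\varphi(\ell x)\,d\mu^F_{t\ell^\gamma}(x),
\]
and then bounds $\|\Psi\|_\infty$ and $\|\Psi'\|_\infty$ directly. The derivative $\Psi'(\ell)$ produces two contributions: one from $\partial_\ell\varphi(\ell x)=x\varphi'(\ell x)$, controlled by a first moment of $|\mu^F|$, and one from the $\ell$-dependence of the time argument $t\ell^\gamma$, namely $t\gamma\ell^{\gamma-1}\,\partial_t\mu^F_{t\ell^\gamma}$, which is then bounded via the fragmentation equation itself together with the TV estimate~\eqref{pgcd2}. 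The factor $\ell^{\gamma-1}$ is exactly where the dichotomy $\gamma\ge1$ versus $\supp(\mu_0)\subset[m,M]$ enters in the paper, mirroring your analysis of the term $y^{n\gamma-1}$.

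The trade-offs are as follows. Your approach is more elementary in that it never calls on the equation for $\partial_t\mu_t^F$: once the series and the TV bound $\|a_n\|_{TV}\le(N+2)^n/n!$ are in hand, everything is pure measure-theoretic manipulation (the Mellin-convolution inequality $|\langle a_n\ast b_n,\varphi\rangle|\le\|a_n\|_{TV}\|b_n\|_{BL}$ plus the cutoff trick). The cost is that you must sum an explicit series, yielding a somewhat larger and less transparent constant. The paper's approach is more structural: it exploits the self-similar scaling of the fundamental solution and the PDE directly, avoids any series summation, and packages the constant compactly in terms of a few moments of $|\mu_t^F|$. Both arguments identify the same obstruction at small sizes and resolve it through the same hypothesis.
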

\begin{proof}
We use the definition of the BL norm given by~\eqref{def:BL} and the representation of the solution provided in Theorem \ref{thm:power_serie}. Take $\varphi \in \mathcal{C}(\R^+)$ such that 
 $\|\varphi\|_{\infty}\leq 1$ and $\|\varphi'\|_{\infty}\leq 1$.
 Then by  (\ref{weakform}) in Theorem \ref{thm:power_serie},
 \begin{equation*}
 \dst\int_0^{+\infty} \varphi(x)d\mu_t(x) =  \dst\int_0^{\textcolor{black}{}} \dst\int_0^{1} \varphi(\ell x) d\mu_{t \ell^{\gamma}}^{F}(x) d\mu_0(\ell).
 \end{equation*}
For $\ell \leq M,$ we set 
\begin{equation*}
 \Psi(\ell)= \dst\int_0^{1} \varphi(\ell x) d\mu_{t \ell^{\gamma}}^{F}(x).
\end{equation*}
We notice that for any $r\geq0$, the moment of order $r$ of the absolute value of the fundamental solution $\mu_t^F$ is uniformly bounded for $t\in[0,T]$ using the rough estimate based on Theorem \ref{thm:WP_BV}
\begin{equation*}
 \dst\int_{\R^+} x^rd|\mu_t^F|(x) \leq \textcolor{black}{L}^r \|\mu_t^F\|_{TV} \leq
 \textcolor{black}{L}^r e^{\alpha \textcolor{black}{L}^{\gamma} (N+1)T}\|\delta(x-1)\|_{TV} =:\tilde{C}(\textcolor{black}{L},N,T,r,\MT{\alpha},\MT{\gamma}).
\end{equation*}
Then for all $\ell \leq \textcolor{black}{L}$,
\begin{equation*}
 \left|\Psi(\ell)\right| \leq \dst\int_0^{+\infty} |\varphi(\ell x)| d|\mu_{t \ell^{\gamma}}^{F}|(x) \leq \|\varphi\|_{\infty} \dst\int_0^{+\infty} d|\mu_{T \textcolor{black}{L}^{\gamma}}^{F}|(x) \leq \tilde{C}(\textcolor{black}{L},N,T,0,\MT{\alpha},\MT{\gamma}) ,
\end{equation*}
and
\begin{equation*}
\begin{aligned}
 \left|\Psi'(\ell)\right| &\leq \dst\int_0^{+\infty}  |\varphi^\inserted{'}(\ell x)|x  d|\mu_{t \ell^{\gamma}}^{F}|(x) + \dst\int_0^{+\infty}  |\varphi(\ell x)| t \gamma \ell^{\gamma-1} \left|\dfrac{\p}{\p t} d|\mu_{t \ell^{\gamma}}^{F}|(x) \right|\\ 
 \end{aligned}
\end{equation*}
where
\begin{equation*}
 \dst\int_0^{+\infty}  |\varphi^{\inserted{'}}(\ell x)|x  d|\mu_{t \ell^{\gamma}}^{F}|(x) \leq \tilde{C}(\textcolor{black}{L},N,T,1,\MT{\alpha},\MT{\gamma})\|\varphi'\|_{\infty},
\end{equation*}
and where 
\begin{equation*}
\begin{aligned}
 \int\limits_0^{\infty}  |\varphi(\ell x)| t \gamma \ell^{\gamma-1}& \left|\dfrac{\p}{\p t} d|\mu_{t \ell^{\gamma}}^{F}|(x) \right| \\
 & \leq  \|\varphi\|_{\infty} T\comMD{\max(\textcolor{black}{L}^{\gamma-1},m^{\gamma-1})} \gamma \alpha \left(\int_0^{\infty}  x^{\gamma} d|\mu_{t\ell^{\gamma}}^{F}|(x)  +  \int\limits_0^{\infty} 
 \int\limits_{x}^{\infty}\kappa\left(\dfrac{x}{y} \right) y^{\gamma-1} d|\mu_{t \ell^{\gamma}}^{F}|
(y)dx \right)
 \\
 &=\|\varphi\|_{\infty} T\comMD{\max(\textcolor{black}{L}^{\gamma-1},m^{\gamma-1})}\gamma \alpha\left(\dst\int_0^{\infty}   x^{\gamma} d|\mu_{t\ell^{\gamma}}^{F}|(x)  +\dst\int_0^{\infty} 
 \dst\int_{0}^{1}\kappa\left(z \right) dz y^{\gamma} d|\mu_{t \ell^{\gamma}}^{F}|
(y)\right)\\
&\leq T\comMD{\max(\textcolor{black}{L}^{\gamma-1},m^{\gamma-1})}\gamma \alpha (N+1) \tilde{C}(\textcolor{black}{L},N,T,\gamma,\MT{\alpha},\MT{\gamma}).
 \end{aligned}
\end{equation*}
We set $C(\textcolor{black}{L},N,T,\alpha,\MT{\gamma,m^{\gamma-1}}):=\tilde{C}(\textcolor{black}{L},N,T,0,\MT{\alpha},\MT{\gamma})+\tilde{C}(\textcolor{black}{L},N,T,1,\MT{\alpha},\MT{\gamma})+ T\comMD{\max(\textcolor{black}{L}^{\gamma-1},m^{\gamma-1})} \gamma \alpha (N+1) \tilde{C}(\textcolor{black}{L},N,T,\gamma,\MT{\alpha},\MT{\gamma})$ and define
 \begin{equation*}
\tilde{\Psi} (\ell) =  \dfrac{\Psi(\ell)}{C(\textcolor{black}{L},N,T,\alpha,\MT{\gamma,m^{\gamma-1}})},  
 \end{equation*}
then 
\begin{equation*}
  \|\tilde{\Psi}\|_{\infty}  \leq 1, \qquad   \|\tilde{\Psi}'\|_{\infty}  \leq 1.
\end{equation*}
We have shown that for any $\varphi \in \mathcal{C}(\R^+)$ satisfying 
$\|\varphi\|_{\infty}  \leq 1, \|\varphi'\|_{\infty}  \leq 1$, there exists 
$\tilde{\Psi}  \in \mathcal{C}(\R^+)$ such that  $\|\tilde{\Psi}\|_{\infty}  \leq 1, \|\tilde{\Psi}'\|_{\infty}  \leq 1$ and 
\begin{equation*}
\dst\int_0^{+\infty} \varphi(x) d\mu_t(x) \leq C(\textcolor{black}{L},N,T,\alpha,\MT{\gamma,m^{\gamma-1}})\dst\int_0^{+\infty} \tilde{\Psi}(x) d\mu_0(x).
\end{equation*}
Thus the conclusion of Theorem \ref{thm:stabBL} holds.
\end{proof}

\Mag{\begin{remark}
 For $\gamma<1$, and for any initial condition $\mu_0$ such that
 $\mu_0(0) \neq 0$, \MT{$m=0$ and thus Theorem \eqref{thm:stabBL} does not provide any estimate on $\|\mu_t\|_{BL}$}. Stability with respect to the initial condition is lost.
\end{remark}
}
\textcolor{black}{
\section{Inverse problem for the fragmentation kernel}
\label{sec:short}
}

\textcolor{black}{
In this section, estimates of the fragmentation kernel and bounds of the  error of such estimates are obtained using  the series expression of the solution $\mu$ of \eqref{eq:frag} provided by Theorem~\ref{thm:power_serie} for short values of the time variable. 
}

\textcolor{black}{
\subsection{An estimation for $\kappa$ using short-time measurements}}
Let us first investigate the best possible case, when the initial data $\mu _0$ is a Dirac delta function at $x=1.$
\begin{theorem}[An estimate for $\kappa$ using short-time measurements of the particles size distribution when initial condition is a delta function at $x=1$.]
\label{thm:short}
Assume $\kappa$ satisfies \eqref{hyp1}
and define 
\begin{equation}
\label{k_est_F}
 \kappa^{est}(t)= \dfrac{\mu_t^{F} - e^{-\a t} \delta(x-1) }{\a t},
\end{equation}
where  $\mu_t^{F}$ is the unique fundamental solution to the fragmentation 
equation~\eqref{eq:frag} with the initial data
$\mu_0 = \delta(x-1)$.
Then we have
\begin{equation}
 \Big\|\kappa^{est}-\kappa\Big\|_{TV} \leq \MT{C(N,T,\alpha)}\; t,\,\,\forall t\in (0, T],  \label{estkappaC}
\end{equation}
for
\begin{equation}
 C= \a \max_{ t \in \left[0,T\right]}\sum\limits_{n=0}^{\infty} (\a t)^n \|a_{n+2}\|_{TV}
 \MT{= \a \sum\limits_{n=0}^{\infty} (\a T)^n \|a_{n+2}\|_{TV}}.
 \label{constantcT}
 \end{equation}
 \end{theorem}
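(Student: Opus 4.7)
The plan is to exploit the power series representation of the fundamental solution $\mu_t^F$ provided by Theorem \ref{thm:power_serie} and simply identify the first two terms. Recall that in Step 1 of the proof of Theorem \ref{thm:power_serie} we obtained
\begin{equation*}
\mu_t^F = e^{-\alpha t}\delta(x-1) + \sum_{n=1}^{\infty} (\alpha t)^n a_n,
\end{equation*}
where the sequence $(a_n)_{n\ge 0}$ is defined by the induction \eqref{induction} with $a_0 = 0$.

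First, I would compute $a_1$ directly from \eqref{induction} with $n=0$: since $a_0=0$, the only surviving term is $\kappa(x)(-1)^0/0!=\kappa(x)$, so $a_1=\kappa$. Substituting the series into the definition \eqref{k_est_F} of $\kappa^{est}(t)$ therefore yields
\begin{equation*}
\kappa^{est}(t) = \frac{1}{\alpha t}\sum_{n=1}^{\infty} (\alpha t)^n a_n = a_1 + \sum_{n=2}^{\infty} (\alpha t)^{n-1} a_n = \kappa + \alpha t \sum_{n=0}^{\infty} (\alpha t)^n a_{n+2},
\end{equation*}
so that
\begin{equation*}
\kappa^{est}(t) - \kappa = \alpha t \sum_{n=0}^{\infty} (\alpha t)^n a_{n+2}.
\end{equation*}

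It remains to take the total variation norm on both sides and apply the triangle inequality. For $t\in(0,T]$, this gives
\begin{equation*}
\|\kappa^{est}(t) - \kappa\|_{TV} \le \alpha t \sum_{n=0}^{\infty} (\alpha t)^n \|a_{n+2}\|_{TV} \le \alpha t \sum_{n=0}^{\infty} (\alpha T)^n \|a_{n+2}\|_{TV},
\end{equation*}
which is exactly \eqref{estkappaC} with the constant \eqref{constantcT}. The only thing left to check is that this constant is finite, which follows immediately from the bound $\|a_n\|_{TV}\le (N+2)^n/n!$ established in \eqref{u_n} during the proof of Theorem \ref{thm:power_serie}: the series $\sum (\alpha T)^n (N+2)^{n+2}/(n+2)!$ converges absolutely.

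There is no real obstacle here beyond correctly reading off the first coefficient $a_1=\kappa$ from the recursion and tracking the shift of indices in the resulting tail series; the absolute convergence estimate that underpinned the construction of $\mu_t^F$ already delivers the constant $C$.
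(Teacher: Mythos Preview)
Your proof is correct and follows essentially the same approach as the paper: both use the power series representation of $\mu_t^F$, identify $a_1=\kappa$, rewrite the remainder as $\alpha t\sum_{n\ge 0}(\alpha t)^n a_{n+2}$, and then bound in TV norm. Your version is arguably a touch cleaner in that you explicitly verify the finiteness of the constant via \eqref{u_n}, whereas the paper simply invokes normal convergence.
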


Before proving Theorem \ref{thm:short}, we point out that another possible formula for the estimated kernel is 
\begin{equation*}
  \kappa^{est}_{bis}(t)= \dfrac{\mu_t^{F} - (1-\alpha  t)\delta(x-1) }{\a t}= 1 +\dfrac{\mu_t^{F} - \delta(x-1) }{\a t}.
\end{equation*}
Since $e^{-\alpha t} =1 - \alpha t +o(t)$, 
we also have
\begin{equation*}
 \Big\|\kappa_{bis}^{est}-\kappa\Big\|_{TV} \leq C t. 
\end{equation*}
\begin{proof}
We have, using the notations introduced in Lemma \ref{lemma:shape_u} and Proposition\ref{prop:rescaling},
\begin{equation*}
 \begin{aligned}
  \dfrac{\mu_t^{F} -e^{-\a t} \delta(x-1)}{\a t} -\kappa 
  = \dfrac{\sum\limits_{n=1}^{\infty} (\a t)^{n} a_n }{\a t}- \kappa
  = \sum\limits_{n=1}^{\infty} (\a t)^{n-1} a_{n} -\kappa
 = \sum\limits_{n=0}^{\infty} (\a t)^n a_{n+1} -\kappa 
\end{aligned}
\end{equation*}
 and 
since $a_1= \kappa$, we have

 \begin{equation*}
 \begin{aligned}
 \sum\limits_{n=0}^{\infty} (\a t)^n a_{n+1} -\kappa 
 = \sum\limits_{n=1}^{\infty} (\a t)^n a_{n+1}
 =
\a t \sum\limits_{n=0}^{\infty} (\a t)^n a_{n+2}.
\end{aligned}
\end{equation*}

Thus 
\begin{equation*}
 \|  \dfrac{\mu_t^{F} -e^{-\a t} \delta(x-1)}{\a t} -\kappa \|_{TV} 
 \leq  \a t \sum\limits_{n=0}^{\infty}( \a t)^n \|a_{n+2}\|_{TV}.
\end{equation*}
The series converges (normal convergence) and thus it is bounded on any compact set, for instance for $t\in [0,T]$.
This ends the proof of Theorem \ref{thm:short}.
\end{proof}
\textcolor{black}{When the initial data is a Dirac delta at $x=\ell>0$ Theorem~\ref{thm:short} and Proposition~\ref{prop:rescaling} give an estimate of the following  rescaled fragmentation kernel,
\begin{equation*}
\kappa_{\ell} = T_{\ell} \#\kappa,
\end{equation*}
where the map $T_{\ell}$ is defined in \eqref{Tell}.
Recall that if $\kappa$ is a function, then 
\begin{equation*}
\kappa_{\ell} (z) = \dfrac{1}{\ell}\kappa\left(\dfrac{z}{\ell}\right), \qquad 0\leq z\leq \ell.
\end{equation*}}
\begin{corollary}[An estimate for $\kappa$ using short-time measurements of the particles size distribution when initial condition is a delta function at $x=\ell$]
\label{cor:rescaling}
We define
\begin{equation*}
 \kappa_{\ell}^{est}(t)= \dfrac{\mu_t^{\ell} - e^{-\a t\ell^{\gamma}} \delta(x-\ell) }{\a t \ell^{\gamma}}, 
\end{equation*}
where $\mu_t^{\ell}$ is the unique solution to \eqref{eq:frag} with the initial condition
$\mu_0 = \delta(x-\ell)$. Then, for all $T>0$,
\begin{equation*}
 \Big\| \kappa_{\ell}^{est}(t)-\kappa_{\ell}\Big\|_{TV} \leq C t \ell^{\gamma},\,\,\forall t\in (0, T]. 
\end{equation*}
 where $C$ is the constant given in (\ref{constantcT})
\end{corollary}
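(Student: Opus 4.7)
The plan is to reduce Corollary \ref{cor:rescaling} to Theorem \ref{thm:short} by exploiting the rescaling identity of Proposition \ref{prop:rescaling}. First, I would use Proposition \ref{prop:rescaling} to write $\mu_t^{\ell}=T_\ell\#\mu_{\ell^\gamma t}^F$, and observe that $\delta(x-\ell)=T_\ell\#\delta(x-1)$. Since pushforward by $T_\ell$ is linear and commutes with multiplication by scalars, this yields
\begin{equation*}
\kappa_\ell^{est}(t)=\frac{T_\ell\#\mu_{\ell^\gamma t}^F-e^{-\alpha\ell^\gamma t}\,T_\ell\#\delta(x-1)}{\alpha\ell^\gamma t}=T_\ell\#\left(\frac{\mu_{\ell^\gamma t}^F-e^{-\alpha\ell^\gamma t}\delta(x-1)}{\alpha\ell^\gamma t}\right)=T_\ell\#\kappa^{est}(\ell^\gamma t),
\end{equation*}
where $\kappa^{est}$ is the estimator defined in Theorem \ref{thm:short}. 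By definition of $\kappa_\ell$, we also have $\kappa_\ell=T_\ell\#\kappa$, so that
\begin{equation*}
\kappa_\ell^{est}(t)-\kappa_\ell=T_\ell\#\bigl(\kappa^{est}(\ell^\gamma t)-\kappa\bigr).
\end{equation*}

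Next, I would invoke the invariance of the total variation norm under pushforward by $T_\ell$. Since $T_\ell\colon\R^+\to\R^+$ is a bijective homeomorphism for $\ell>0$, the duality formula \eqref{TVdef} together with the change of variable $\varphi\mapsto\varphi\circ T_\ell$ (which preserves the sup norm) gives $\|T_\ell\#\nu\|_{TV}=\|\nu\|_{TV}$ for every signed Radon measure $\nu$. Applying this to $\nu=\kappa^{est}(\ell^\gamma t)-\kappa$ reduces the estimate to the one already established in Theorem \ref{thm:short}:
\begin{equation*}
\|\kappa_\ell^{est}(t)-\kappa_\ell\|_{TV}=\|\kappa^{est}(\ell^\gamma t)-\kappa\|_{TV}\leq C\,(\ell^\gamma t),
\end{equation*}
the last inequality being \eqref{estkappaC} applied at the time $\ell^\gamma t$ (with $C$ the constant in \eqref{constantcT}, possibly evaluated at horizon $\ell^\gamma T$ so that $\ell^\gamma t$ remains in the admissible range).

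No real obstacle is expected: the argument is purely algebraic once Proposition \ref{prop:rescaling} is in hand. The only point that deserves care is the claim on pushforward invariance of the TV norm, which I would briefly justify by testing against $\varphi\in\mathcal{C}(\R^+)$ with $\|\varphi\|_\infty\leq 1$ and noting that $\varphi\circ T_\ell$ is still an admissible test function of $\mathcal{C}(\R^+)$ with the same sup norm. A secondary subtlety is the range of validity in $t$: since Theorem \ref{thm:short} is stated on $(0,T]$, the rescaled bound holds a priori on $(0,T/\ell^\gamma]$; this is handled simply by choosing the horizon $T$ in Theorem \ref{thm:short} as $\ell^\gamma T$, and the constant $C$ in \eqref{constantcT} is then correspondingly adjusted.
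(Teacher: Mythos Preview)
Your proposal is correct and follows essentially the same route as the paper: both identify $\kappa_\ell^{est}(t)-\kappa_\ell$ as the pushforward by $T_\ell$ of $\kappa^{est}(\ell^\gamma t)-\kappa$, use that pushforward by $T_\ell$ does not increase the TV norm (the paper notes $\|T\#\mu\|_{TV}\le\|\mu\|_{TV}$ with equality when $T$ is injective, you argue equality directly via bijectivity), and then invoke Theorem~\ref{thm:short} at the rescaled time $\ell^\gamma t$. Your remark on the time horizon---that the constant in \eqref{constantcT} must be taken at $\ell^\gamma T$ rather than $T$ when $\ell>1$---is a valid caveat that the paper's proof glosses over.
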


\begin{proof}
 We notice that for any continuous map $T$,
 we have $\|T\# \mu\|_{TV} \leq \|\mu\|_{TV} $ (with equality if the measure $\mu$ is positive, 
 or if $T$ is \MT{an injection}).
 Let us set $\eta= \mu_{t\ell^{\gamma}}^{F} - e^{-\a t\ell^{\gamma}} \delta(x-1) $. 
 We have 
 $T_{\ell} \#\eta=\mu_t^{\ell}-e^{-\a t\ell^{\gamma}} \delta(x-\ell) $, hence using Theorem \ref{thm:short}
 \begin{equation*}
 \Big\|\dfrac{\mu_t^{\ell} - e^{-\a t\ell^{\gamma}} \delta(x-\ell) }{\a t \ell^{\gamma}}-\kappa_{\ell}\Big\|_{TV} 
 = \Big\| T_{\ell} \# \Big( \dfrac{\mu_{t\ell^{\gamma}}^{F} - e^{-\a t\ell^{\gamma}} \delta(x-1) }{\a t \ell^{\gamma}}-\kappa\Big)\Big\|_{TV} \leq C \a \ell^{\gamma} t, 
 \end{equation*}
 where $C$ is the constant in (\ref{constantcT}).
 This ends the proof. 
\end{proof}

\textcolor{black}{In most of the cases, a Dirac delta as an initial condition is experimentally out of reach. However, as proved in the next corollary, for all initial data $\mu _0$ satisfying  \eqref{hyp2}, it is possible to estimate not the kernel $\kappa$ itself but the convolution  $\kappa \ast w _0$ where $dw_0(\ell)=\ell^\gamma d\mu _0(\ell)$. Moreover, if the initial data $\mu _0$ becomes closer and closer, in some  suitable sense, to $\delta (x-1)$, so does $\kappa \ast w _0$ and the estimate of $\kappa \ast w_0$ gives an estimate of $\kappa $ itself.}

\textcolor{black}{If $\mu _0$ satisfies   \eqref{hyp2} and  $\mu $ is  the unique solution given by  Theorem \ref{thm:power_serie}  of the equation  \eqref{eq:frag} with initial data $\mu _0$, define
\begin{equation}\label{def:kappaestmu0}
\kappa^{est}(\mu _0; t, x)=\dfrac{\mu (t,x) - e^{-\a tx^{\gamma}}\mu _0(x)}{\a t}.
\end{equation}
We have the following corollary.}
\begin{corollary}[Generic initial condition]\label{cor:generic}
Assume $\kappa$ \textcolor{black}{satisfies $\eqref{hyp1}$ and} $\mu_0$ 
\MT{satisfies \eqref{hyp2}}.
\textcolor{black}{ Then, for all $T>0$, 
\begin{equation}
\label{cor:generic1}
\Big\|{\kappa^{est}(\mu _0; t)}-w_0\ast\kappa(x)\Big\|_{TV} \leq  C\textcolor{black}{L}^{2\gamma }\|\MT{\mu_0}\|_{ TV}\, t,\,\,\forall t\in (0, T], 
\end{equation}
where 
$w_0$ denotes the measure with density $\ell \to \ell^{\gamma}\mu _0(\ell)$, $C$ is given in (\ref{constantcT}) and $\kappa^{est}(\mu_0;t)$ is defined by~\eqref{def:kappaestmu0}.}

 \textcolor{black}{ If $\{\mu  _{ 0, n }\} _{ n\in \N }\subset \mathcal{M^+}(\R^+)$ is a sequence such that 
\begin{equation}
\left\{
\begin{aligned}
\label{mcd1}
&\lim _{  n\to \infty}||\mu _{0, n}-\delta (x-1)||_{BL}=0\\
&\sup_{n\in \N}||\mu _{0, n}||_{TV}<\infty
\end{aligned}
\right.
\end{equation}
or if 
\begin{align}
\label{mcd2}
\lim _{  n\to \infty}||\mu _{0, n}-\delta (x-1)||_{TV}=0,
\end{align}
then for all $\varepsilon >0$ there exists $n_0$ such that for all $T>0$,
\begin{equation}
\label{cor:generic2}
 \Big\|{\kappa^{est}(\mu  _{ 0, n }; t)}- \kappa(x)\Big\|_{BL} \leq C\textcolor{black}{L}^{2\gamma}
 \sup_{n\in \N}||\mu _{0, n}||_{TV}\,t+\varepsilon,\quad\forall t\in (0, T],\quad \forall n>n_0 . 
\end{equation}
}
\end{corollary}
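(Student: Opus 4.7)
The plan is to leverage the power series representation of the solution $\mu_t$ provided by Theorem~\ref{thm:power_serie} (in the form of Remark~\ref{remark1}) in exactly the same spirit as the proof of Theorem~\ref{thm:short}. Writing $\mu_t = e^{-\alpha x^{\gamma}t}\mu_0 + \sum_{n\ge 1}(\alpha t)^n a_n \ast b_n$ with $db_n(y) = y^{n\gamma}d\mu_0(y)$, and noting that $a_0 = 0$, $a_1 = \kappa$, $b_1 = w_0$, direct substitution in \eqref{def:kappaestmu0} gives the key identity
\begin{equation*}
 \kappa^{est}(\mu_0;t) - \kappa\ast w_0 \;=\; \alpha t\sum_{n=0}^{\infty}(\alpha t)^n\, a_{n+2}\ast b_{n+2}.
\end{equation*}

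To prove \eqref{cor:generic1}, I would bound the total variation norm termwise using $\|\mu\ast\nu\|_{TV}\le \|\mu\|_{TV}\|\nu\|_{TV}$ together with the elementary estimate $\|b_n\|_{TV}\le L^{n\gamma}\|\mu_0\|_{TV}$ coming from \eqref{hyp2}. Factoring out $L^{2\gamma}$ yields
\begin{equation*}
 \|\kappa^{est}(\mu_0;t) - \kappa\ast w_0\|_{TV} \;\le\; \alpha t\, L^{2\gamma}\|\mu_0\|_{TV}\sum_{n=0}^{\infty}(\alpha t L^{\gamma})^n\|a_{n+2}\|_{TV},
\end{equation*}
and the series converges on $[0,T]$ thanks to the bound \eqref{u_n} on $\|a_n\|_{TV}$, absorbing the $L^{\gamma}$-dependence into the constant $C$.

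For \eqref{cor:generic2}, I would insert $\kappa\ast w_{0,n}$ via the triangle inequality
\begin{equation*}
 \|\kappa^{est}(\mu_{0,n};t) - \kappa\|_{BL} \;\le\; \|\kappa^{est}(\mu_{0,n};t) - \kappa\ast w_{0,n}\|_{BL} \;+\; \|\kappa\ast(w_{0,n}-\delta_1)\|_{BL}.
\end{equation*}
The first term is controlled by \eqref{cor:generic1} and \eqref{BL_TV}, contributing exactly the $CL^{2\gamma}\sup_n\|\mu_{0,n}\|_{TV}\,t$ term. For the second, I would first dispose of the easier case \eqref{mcd2} by the estimate $\|w_{0,n}-\delta_1\|_{TV}\le L^{\gamma}\|\mu_{0,n}-\delta_1\|_{TV}$, which tends to $0$, and then use $\|\cdot\ast\kappa\|_{TV}\le N\|\cdot\|_{TV}$ together with \eqref{BL_TV}. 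For the more delicate case \eqref{mcd1}, by duality in \eqref{def:BL} I would write
\begin{equation*}
 \|\kappa\ast(w_{0,n}-\delta_1)\|_{BL} = \sup_{\varphi}\int \Phi(y)\, y^{\gamma}\, d(\mu_{0,n}-\delta_1)(y), \qquad \Phi(y):=\int \varphi(xy)\,d\kappa(x),
\end{equation*}
where $\varphi$ ranges over the BL-unit ball. Using $\int x\,d\kappa(x)=1$ from \eqref{hyp1}, one checks that $\|\Phi\|_\infty\le N$ and $\|\Phi'\|_\infty\le 1$, so the family of test functions $\psi_{\varphi}(y):=\Phi(y)y^{\gamma}$ is uniformly bounded and equicontinuous on $[0,L]$ (uniformly Lipschitz for $\gamma\ge 1$, uniformly Hölder for $0<\gamma<1$), hence pre-compact in $C([0,L])$ by Arzelà-Ascoli. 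Since BL-convergence together with bounded TV-mass implies narrow convergence $\mu_{0,n}\to\delta_1$, a standard finite $\varepsilon$-net argument on the pre-compact family upgrades pointwise convergence $\int \psi_{\varphi}d\mu_{0,n}\to \psi_{\varphi}(1)$ to convergence uniform in $\varphi$, giving the $\varepsilon$ in \eqref{cor:generic2} for $n$ large enough.

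The main obstacle is this last uniformization step under hypothesis \eqref{mcd1}: BL-convergence is weaker than TV-convergence, so one cannot simply bound the second term in the triangle inequality by TV, and one must instead exploit the joint structure (equicontinuity) of the one-parameter family of dual test functions $\psi_\varphi$, the subtlety being that the factor $y^{\gamma}$ destroys Lipschitz regularity when $\gamma<1$ and forces a Hölder-based equicontinuity argument rather than a direct uniform Lipschitz bound.
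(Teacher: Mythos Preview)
Your proposal is correct and complete.

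For \eqref{cor:generic1}, you go directly through the series representation of Remark~\ref{remark1}, bounding $\|a_{n+2}\ast b_{n+2}\|_{TV}$ termwise. The paper instead routes through Corollary~\ref{cor:rescaling}: it integrates the pointwise-in-$\ell$ estimate $\|Y_\ell\|_{TV}\le Ct\ell^{2\gamma}$ against $d\mu_0(\ell)$ via the Bochner integral. The two computations are essentially dual to one another and give the same final bound; your remark that the extra $L^\gamma$ factors are absorbed into the constant is correct and is exactly what happens in the paper's route too once one tracks that Corollary~\ref{cor:rescaling} is applied at time $t\ell^\gamma\in(0,TL^\gamma]$.

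For \eqref{cor:generic2}, the triangle inequality you insert is the same as the paper's. The difference is in the treatment of $\|\kappa\ast w_{0,n}-\kappa\|_{BL}$ under hypothesis~\eqref{mcd1}. The paper argues in two steps: first that \eqref{mcd1} for $\mu_{0,n}$ implies the same for $w_{0,n}$, and then that BL-convergence of $w_{0,n}$ to $\delta_1$ passes through the convolution with $\kappa$; the first step is stated without detail and is justified in the subsequent Remark by invoking the equivalence of BL-convergence and narrow convergence on tight families with uniformly bounded TV (Proposition~4 in \cite{Hanin}). Your approach merges the two steps into a single duality computation and handles the resulting one-parameter family of test functions $\psi_\varphi(y)=\Phi(y)y^\gamma$ by an Arzel\`a--Ascoli compactness argument, using H\"older equicontinuity when $\gamma<1$. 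This is more self-contained than the paper's version and makes explicit precisely the obstruction (loss of Lipschitz regularity of $y^\gamma$ at the origin) that the paper bypasses via the external reference.
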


\begin{proof}
For $\ell>0$, we multiply the measure 
\begin{equation*}
 X_{\ell}= \dfrac{\mu_t^{\ell} - e^{-\a t\ell^{\gamma}} \delta(x-\ell) }{\a t \ell^{\gamma}} - \kappa_{\ell}
\end{equation*}
by the smooth function $\ell \to \ell^{\gamma}$, and apply 
Corollary \ref{cor:rescaling} to obtain
\begin{equation*}
 \|Y_{\ell}\|_{TV}\leq C t \ell^{2\gamma},
\end{equation*}
with $C$ the constant given in (\ref{constantcT}) and
$Y_{\ell}= \dfrac{\mu_t^{\ell} - e^{-\a t\ell^{\gamma}} \delta(x-\ell) }{\a t} -  \ell^{\gamma}\kappa_{\ell}$.
We multiply the function $\ell \to Y_{\ell}$ from
$\R^+$ onto $\M(\R^+)$ by $\ell \to \mu_0(\ell)$ and 
integrate over $\R^+$. Since $\left(\M(\R^+), \|.\|_{TV}\right)$ is a Banach space, 
we can use the Bochner integral so that we have
\begin{equation*}\begin{array}{ll}
 \Big\|\Mig{\kappa^{est}(\mu _0; t)}-w_0\ast\kappa (x)\Big\|_{TV}
 &=  \Big\| \dst\int_{\R^+} Y_{\ell} d\mu _0(\ell) \Big\|_{TV} \\ \\
& \leq \dst\int_{\R^+} \|Y_{\ell}\|_{TV} d\mu _0(\ell) 
 \le  t \dst\int_{\R^+}\ell^{2\gamma} d \mu _0(\ell),
\end{array}\end{equation*}
and (\ref{cor:generic1}) follows.

\textcolor{black}{
If we suppose now that $\{\mu _{0, n}\}_{n\in \N}$ satisfies (\ref{mcd1}), then so does $\{w _{0, n}\}_{n\in \N}$ and therefore
$||w_{0, N}\ast \kappa -\kappa ||_{BL} \underset{n\to \infty}{ \longrightarrow}0$.
We deduce by (\ref{BL_TV}) and (\ref{cor:generic1}),
\begin{align*}
|| \kappa^{est}(\mu  _{ 0, n }; t)-\kappa || _{ BL }& \le || \kappa^{est}(\mu  _{ 0, n }; t)-w_{0, n}\ast \kappa || _{ TV }+||\kappa -\kappa \ast w _{ 0, n }|| _{ BL }\\
&\le C\textcolor{black}{L}^{2\gamma }||\mu _{ 0, n }|| _{ TV }t +||\kappa -\kappa \ast w _{ 0, n }|| _{ BL }
\end{align*}
from where (\ref{cor:generic2}) follows.}

\textcolor{black}{Now if $\{\mu _{0, n}\}_{n\in \N}$ satisfies (\eqref{mcd2}, $\{w _{0, n}\}_{n\in \N}$ does too and therefore
\begin{equation*}
\lim _{ n\to \infty  }||\kappa -\kappa \ast w _{ 0, n }|| _{ TV }=0.
\end{equation*}
We deduce,
\begin{align*}
|| \kappa^{est}(\mu  _{ 0, n }; t)-\kappa || _{ TV } \le C \textcolor{black}{L}^{2\gamma }||\mu _{ 0, n }|| _{ TV }t +||\kappa -\kappa \ast w _{ 0, n }|| _{ TV }\\
\end{align*}
and (\ref{cor:generic2}) follows.}
\end{proof}

\begin{remark}
\textcolor{black}{If $\{\mu _{0, n}\}_{n\in \N}$ is such that
\begin{equation*}
\begin{aligned}
& \mu _{0, n}  \underset{n\to \infty}{ \rightharpoonup}\delta (x-1),\,\,\hbox{in the weak sense of measures},\\
&\sup_{n\in \N}||\mu _{0, n}||_{TV}<\infty,\\
&\exists\,\,Q\subset [0, \infty),\,\hbox{compact}\; ;\,\,\supp \mu _{0, n}\subset Q,\,\,\forall n\in \N,
\end{aligned}
\end{equation*}
then, by  Proposition 4 in \cite{Hanin},
$||w_{0, n}-\delta (x-1)||_{BL}\underset{n\to \infty}{ \longrightarrow 0}$. It follows that Property (\ref{mcd1}) is satisfied and (\ref{cor:generic2}) holds. Notice however that property (\ref{mcd1}) is not satisfied {\color{vert} for any weakly-converging sequence $\mu_{0,n}$.}}
\end{remark}
\inserted{}

\subsection{Stability of the $\kappa$ estimate {color{vert} with respect to model and measurement noises}.} 
Let us now turn to error estimates in more realistic observation cases, where the noise may be twofold: 1/ a model noise, where the initial condition is close to a Dirac delta in the BL distance; and 2/ a measurement noise, where the size distributions $\mu_0$ and $\mu_t$ are observed with an error. \textcolor{black}{A stability result for the time-dependent solution with respect to the initial condition $\mu _0$ has already been proved in Theorem \ref{thm:stabBL}.}

\begin{theorem}
[Stability of the $\kappa$ estimate with respect to {\color{vert}noises on} the initial condition {\color{vert} and the measurements}]
\label{thm:stab}
Assume $\kappa$ satisfies \eqref{hyp1}.
Take an initial condition $\mu_0^{q}$ satisfying \eqref{hyp2}
and that is close to a delta function at $x=1$ in the sense that
\begin{equation*}
 \|\mu_0^{q} - \delta(x-1)\|_{BL} \leq q.
\end{equation*}
We denote by $\mu_t^q$ the unique solution to the fragmentation equation 
\eqref{eq:frag} with initial condition $\mu_0^{q}$.
Consider the noisy measurements $\mu_0^{q,\eps_0}$ and 
 $\mu_t^{q,\eps}$ of the respective measures $\mu_0^q$ and $\mu_t^q$
 such that 
 \begin{equation*}
 \|\mu_0^{q,\eps_0}- \mu_0^q\|_{BL} \leq \eps_0,\qquad 
   \|\mu_t^{q,\eps}- \mu_t^q\|_{BL} \leq \eps.
 \end{equation*}
 \Mag{Assume moreover either $\gamma\geq 1$ or $\supp (\mu_0)\subset [m,\textcolor{black}{L}]$ with $m>0$}.
Then, for all $0\leq t\leq T$, there are some constants $\MT{C_1(N,T,\alpha)}$ and $\MT{C_2(L,N,T,\alpha,\MT{\gamma,m^{\gamma-1}})}$ such that 
\begin{equation}
\label{thm:stabE4}
 \Big\|\dfrac{\mu_t^{q,\eps} - e^{-\a t} \mu_0^{q,\eps_0} }{\a t}-\kappa\Big\|_{BL} 
 \leq \MT{C}_1  t + \dfrac{\eps_0+\eps+\MT{C}_2 q}{\a t}. 
\end{equation}
\end{theorem}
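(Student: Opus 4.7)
The plan is to decompose the error of the left-hand side of \eqref{thm:stabE4} by inserting two intermediate estimators: the \emph{noise-free} estimator $(\mu_t^q-e^{-\alpha t}\mu_0^q)/(\alpha t)$, built from the exact solution and the exact initial data, and the \emph{idealized} estimator $(\mu_t^F-e^{-\alpha t}\delta(x-1))/(\alpha t)$, corresponding to a Dirac initial condition. The error then splits into three terms which I will control separately: a measurement-noise term, a modelling-noise term due to $\mu_0^q\neq\delta(x-1)$, and the intrinsic truncation error already handled by Theorem \ref{thm:short}.

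For the measurement term $((\mu_t^{q,\eps}-\mu_t^q)-e^{-\alpha t}(\mu_0^{q,\eps_0}-\mu_0^q))/(\alpha t)$, the triangle inequality for $\|\cdot\|_{BL}$ and the hypotheses on $\eps,\eps_0$ give the bound $(\eps+e^{-\alpha t}\eps_0)/(\alpha t)\leq(\eps+\eps_0)/(\alpha t)$. For the truncation term $(\mu_t^F-e^{-\alpha t}\delta(x-1))/(\alpha t)-\kappa$, Theorem \ref{thm:short} provides a TV bound of order $t$ and \eqref{BL_TV} upgrades it to BL, yielding $C_1\,t$ with $C_1=C(N,T,\alpha)$ the constant of \eqref{constantcT}.

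The modelling term
$$\frac{(\mu_t^q-\mu_t^F)-e^{-\alpha t}(\mu_0^q-\delta(x-1))}{\alpha t}$$
is the crux of the argument. By linearity of \eqref{eq:frag}, $\mu_t^q-\mu_t^F$ is the time-$t$ value of the solution to \eqref{eq:frag} starting from the signed initial datum $\mu_0^q-\delta(x-1)$, whose BL norm is bounded by $q$ by hypothesis. The main technical obstacle is that Theorem \ref{thm:stabBL} is formulated for nonnegative initial data, while the difference here is signed. However, the proof of that theorem is linear in $\mu_0$: from an arbitrary BL test $\varphi$ it builds, through the representation \eqref{weakform}, a rescaled function $\tilde\Psi$ in the unit ball of BL test functions such that $\int\varphi\,d\mu_t=C_2\int\tilde\Psi\,d\mu_0$, an identity that carries over verbatim to signed compactly supported initial data. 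Taking the supremum over admissible $\varphi$ yields $\|\mu_t^q-\mu_t^F\|_{BL}\leq C_2\,q$ with $C_2=C(\textcolor{black}{L},N,T,\alpha,\gamma,m^{\gamma-1})$. Combined with the trivial bound $\|e^{-\alpha t}(\mu_0^q-\delta(x-1))\|_{BL}\leq q$, the modelling term is controlled by $(C_2+1)q/(\alpha t)$, which can be absorbed into a single constant still denoted $C_2$. Summing the three contributions yields \eqref{thm:stabE4}.
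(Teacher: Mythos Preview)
Your proof is correct and follows essentially the same route as the paper: a triangle-inequality decomposition into measurement, modelling, and truncation errors, with Theorem~\ref{thm:short} (via \eqref{BL_TV}) controlling the truncation term and Theorem~\ref{thm:stabBL} controlling the modelling term. The paper splits into five pieces where you group into three, and applies Theorem~\ref{thm:stabBL} to the signed difference $\mu_t^q-\mu_t^F$ without comment; your explicit remark that the proof of that theorem is linear in $\mu_0$ and therefore extends to signed compactly supported data is a welcome clarification that the paper glosses over.
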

\begin{proof}
 We use the triangle inequality to write 
 \begin{equation}
 \label{thm:stabE7}
 \begin{aligned}
 \Big\|\dfrac{\mu_t^{q,\eps} - e^{-\a t} \mu_0^{q,\eps_0} }{\a t}-\kappa\Big\|_{BL} 
 \leq  &\dfrac{\|\mu_t^{q,\eps} -\mu_t^{q}\|_{BL}}{\a t}
 +\dfrac{\|\mu_t^{q} -\mu_t^{\MT{F}}\|_{BL}}{\a t}
 +e^{-\a t}\dfrac{\|\delta(x-1) -\mu_0^q\|_{BL}}{\a t}
 \\
 &e^{-\a t}\dfrac{ \|\mu_0^q- \mu_0^{q,\eps_0}\|_{BL}}{\a t}
 + \Big\|\dfrac{\mu_t - e^{-\a t} \delta(x-1) }{\a t}-\kappa\Big\|_{BL}.
 \end{aligned}
\end{equation}
The first, third and  fourth terms in the right-hand side of (\ref{thm:stabE7}) are directly controlled
using the assumptions of Theorem \ref{thm:stab}.In the last term at the right-hand side of (\ref{thm:stabE7}),
Theorem \ref{thm:short} combined with \eqref{BL_TV}
guarantee that 
\begin{equation*}
  \Big\|\dfrac{\mu_t - e^{-\a t} \delta(x-1) }{\a t}-\kappa\Big\|_{BL}\leq \MT{C(N,T,\alpha)}\; t.
\end{equation*}
For the second term, we use Theorem \ref{thm:stabBL} to obtain
\begin{equation*}
\|\mu_t^{q} -\mu_t\|_{BL}
\leq \MT{ C(L,N,T,\alpha,\MT{\gamma,m^{\gamma-1}}) \|\mu_0^q-\delta(x-1)\|_{BL}}
\end{equation*}
Thus with the assumptions of Theorem \ref{thm:stab},
 we obtain 
 \begin{equation*}
\|\mu_t^{q} -\mu_t\|_{BL}
\leq  \MT{ C(L,N,T,\alpha,\MT{\gamma,m^{\gamma-1}}) } q.
\end{equation*}
This completes the proof of Theorem \ref{thm:stab} with $\MT{C_1= \MT{C(N,T,\alpha)}}$ and $\MT{C_2=1+C(L,N,T,\alpha,\MT{\gamma,m^{\gamma-1}}).}$
\end{proof}
\MT{
\begin{remark}
\label{rem:t_optimal}
\textcolor{black}{{\color{vert} We notice that~\eqref{thm:stabE4} presents a balance between two terms, which is classically encountered in the field of inverse problems~\cite{Engl} and which is also reminiscent of the classical bias-variance tradeoff in nonparametric statistics~\cite{GineNickl2016}. The time interval $t$ plays the same role as a regularisation parameter: if too small, the noise is not smoothed and the right-hand side of~\eqref{thm:stabE4} tends to infinity; if too large, the estimate loses its accuracy, the right-hand side being not small. There} is a time $t^*$ such that the estimate provided by Theorem \eqref{thm:stab} is optimal, namely
\begin{equation}
\label{t_opt}
t^*=\sqrt{\dfrac{\eps_0+\eps+C_2q}{\alpha C_1}}.
\end{equation}
{\color{vert} For this value, the error estimate is in the order of $\sqrt{\eps_0+\eps+ q}$, vanishing when the noise levels vanish, though at a lower speed than the noises themselves - the rate of convergence in the order of $\sqrt{\eps}$ being reminiscent of mildly ill-posed problems.}}
\end{remark}}

{\color{vert}\begin{remark}
    Using short-time measurements to estimate parameters of a given time-dependent equation is an idea that has appeared for other types of equations. Recently, a very similar approach has been used for estimating the tumbling kernel of a mesoscopic equation for chemotaxis~\cite{hellmuth2022kinetic}; in their approach, convergence of their estimate is obtained, but no quantitative error estimate as~\eqref{thm:stabE4}. Further away from our equation, it has been used to estimate the exponent of a time-fractional diffusion equation~\cite{li2019inverse}, or yet the diffusion parameter in the heat equation~\cite{cao2006natural}. However, up to our knowledge, no systematic approach which would analyse the "short-time method" in a general framework, and which would justify our analogy of the time window of the observation  with a regularisation parameter, has yet been developed.
\end{remark}
}

\section{Reconstruction formula \textcolor{black}{in Mellin variables}}
\label{sec:reco}
 We have seen in the previous section how to approximate $\kappa$  when the initial condition is not too far from a Dirac measure, and {\color{vert} how to approximate $w_0 * \kappa$ by } $\kappa^{est}(\mu _0;.)$ {\color{vert} for generic initial condition}. \textcolor{black}{This section is devoted to the deduction of a reconstruction formula for the Mellin transform of the fragmentation kernel $\kappa$ in the case of generic initial condition, and to estimate the error  of such an approximation, in terms of short-time measurements of the population data and the initial data. The best method to this end is not to use the Mellin transform of the approximation $\kappa^{est}(\mu_0;.)$ of $\kappa$ obtained in Corollary \ref{cor:generic}.  Instead, the series representation of $\mu_t$  is used to deduce a series representation of its Mellin transform $U$, and   then  an approximation of the Mellin transform directly.}
 
\textcolor{black}{Suppose that $\kappa $ satisfies   \eqref{hyp1}, $\mu _0$ satisfies    \eqref{hyp2} and let $\mu $ be the solution to \eqref{eq:frag} with initial condition $\mu_0$ given by Theorem \ref{thm:power_serie} . We denote by $U(t,.)$ the  $x$-Mellin transform  of $\mu_t$ to \eqref{eq:frag}, and we denote by $K$ the Mellin transform of $\kappa$, {\it i.e.}
 \begin{equation*}
 U(t,s) = \dst\int_0^{+\infty} x^{s-1} d\mu_t(x), \quad K(s)= \dst\int_0^{+\infty} z^{s-1} d\kappa(z).
 \end{equation*}
We also define
\begin{equation*}{ W(t,h,s) = \dst\int_0^{+\infty} x^{s-1}e^{-\alpha h x^{\gamma}} d\mu_t(x)},\,\,\forall t\geq0,\,\,\forall h> 0.
\end{equation*}
It follows from \eqref{hyp1} and Theorem \ref{thm:power_serie} that $K$ is   analytic in $s\in S_1^+= \{s\in \C;\,\,\Re e(s)> 1\}$ and so are  $U(t)$ and $W(t, h)$ for all $t>0$ and $h>0$.
 }
 \subsection{A formula for $U$}
\begin{lemma}[Representation of $U$ as a power series]
\label{lemma:MellinPS}
Take $\kappa$ satisfying \eqref{hyp1} and $\mu_0\in \M(\R^+)$
\textcolor{black}{that satisfies \eqref{hyp2}}.
Then, the Mellin transform $U$ of the solution $\mu_t$ to \eqref{eq:frag}
satisfies
\begin{equation}
\label{U_power_serie_inter}
\begin{aligned}
 U(t+\Delta \tau , s) =& 
\Mag{W(t,\Delta \tau , s) }
 +  \sum_{n=1}^{\infty} \dfrac{(\a \Delta \tau )^n}{n!}U(t,s+\gamma n )\times \\
 &\times \sum_{j=0}^{n-1}(-1)^{n-1-j}K(s+j\gamma) 
 \prod_{m=0}^{j-1} (K(s+m\gamma)-1),
\end{aligned}
\end{equation}
for $t>0$ and $\Delta \tau >0$, with the convention 
\begin{equation*}
\prod_{n\in \emptyset} b_n=1. 
\end{equation*}
\end{lemma}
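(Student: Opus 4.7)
The plan is to exploit the semigroup (Markov) structure of the equation, apply the power series representation of Theorem~\ref{thm:power_serie} with $\mu_t$ as the new initial condition, and then Mellin-transform term by term. More precisely, by the autonomy of \eqref{eq:frag} and the uniqueness result of Theorem~\ref{thm:WP_BV}, the measure $\mu_{t+\Delta\tau}$ coincides with the solution at time $\Delta\tau$ of \eqref{eq:frag} with initial condition $\mu_t$. Invoking \eqref{representation_solution}, or equivalently \eqref{remark1E1}--\eqref{remark1E2}, I would write
\begin{equation*}
\mu_{t+\Delta\tau} \;=\; e^{-\alpha x^\gamma \Delta\tau}\,\mu_t \;+\; \sum_{n=1}^{\infty} (\alpha\,\Delta\tau)^n\, a_n \ast b_n^t,
\qquad db_n^t(y) = y^{n\gamma}\,d\mu_t(y),
\end{equation*}
with absolute TV-convergence guaranteed by the bound $\|a_n\|_{TV}\leq (N+2)^n/n!$ established in the proof of Theorem~\ref{thm:power_serie}.

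Next, I would Mellin-transform both sides in $x$. The first term is exactly $W(t,\Delta\tau,s)$ by the definition given in the excerpt. For the generic term, Proposition~\ref{prop:MellinMC} gives $M[a_n\ast b_n^t](s) = A_n(s)\,M[b_n^t](s)$, where $A_n(s):=M[a_n](s)$, and a direct computation yields $M[b_n^t](s)=U(t,s+n\gamma)$. Since $\mu_t$ is compactly supported in $[0,L]$ (Theorem~\ref{thm:WP_BV}), one has $|U(t,s+n\gamma)|\leq L^{n\gamma}(L^{\Re s-1}\vee 1)\|\mu_t\|_{TV}$, and this together with the TV bound on $a_n$ justifies swapping sum and Mellin integral for every $s$ with $\Re s>1$; this then gives
\begin{equation*}
U(t+\Delta\tau,s) \;=\; W(t,\Delta\tau,s) \;+\; \sum_{n=1}^{\infty} (\alpha\,\Delta\tau)^n\, A_n(s)\, U(t,s+n\gamma).
\end{equation*}

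It remains to identify $A_n(s)$ with the coefficient claimed in \eqref{U_power_serie_inter}. I would transform the recursion \eqref{induction} termwise: the shift rule $M[x^\gamma f](s)=M[f](s+\gamma)$ handles the first two summands (noting that $\int_x^\infty y^{\gamma-1}\kappa(x/y)a_n(y)dy$ is the Mellin convolution of $\kappa$ with $y\mapsto y^\gamma a_n$, hence its Mellin transform is $K(s)\,A_n(s+\gamma)$), while the last summand contributes $\frac{(-1)^n}{n!}K(s)$. This produces the scalar recursion
\begin{equation*}
A_{n+1}(s) \;=\; \frac{1}{n+1}\!\left(A_n(s+\gamma)\bigl(K(s)-1\bigr) \;+\; \frac{(-1)^n}{n!}\,K(s)\right),\qquad A_0(s)\equiv 0.
\end{equation*}
Setting $S_n(s):=n!\,A_n(s)$ turns this into $S_{n+1}(s)=S_n(s+\gamma)(K(s)-1)+(-1)^n K(s)$ with $S_1(s)=K(s)$, and a one-line induction (re-indexing $j\mapsto j+1$ in the sum) shows $S_n(s)=\sum_{j=0}^{n-1}(-1)^{n-1-j}K(s+j\gamma)\prod_{m=0}^{j-1}(K(s+m\gamma)-1)$, which is the claimed formula.

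The calculations themselves are routine; the only point requiring care is the legitimacy of the termwise Mellin transform and of the interchange of summation with integration. This is precisely where the main obstacle lies: one needs uniform control on $|K(s+m\gamma)|$ (immediate from $\supp\kappa\subset[0,1]$, giving $|K(s+m\gamma)|\leq N$ for $\Re s\geq 1$) and on $|U(t,s+n\gamma)|$ (from compactness of $\supp\mu_t$), together with the factorial decay of $\|a_n\|_{TV}$. These three estimates combine to give absolute (in fact, entire in $\Delta\tau$) convergence of the series, closing the argument.
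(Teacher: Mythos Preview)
Your proof is correct and follows essentially the same route as the paper's: use autonomy to restart the series representation of Theorem~\ref{thm:power_serie} at time $t$, Mellin-transform termwise via Proposition~\ref{prop:MellinMC} to obtain $U(t+\Delta\tau,s)=W(t,\Delta\tau,s)+\sum_n(\alpha\Delta\tau)^n A_n(s)\,U(t,s+n\gamma)$, derive the scalar recursion for $A_n$, and solve it by induction. Your substitution $S_n=n!\,A_n$ is a minor notational convenience, and your explicit justification of the sum--integral interchange (via the TV bound on $a_n$ and compact support of $\mu_t$) is a welcome detail that the paper leaves implicit.
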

\begin{proof}
\Mag{
 Since the fragmentation equation is autonomous, Theorem 
 \ref{thm:power_serie} implies that for all $t>0$, $\Delta \tau >0$, 
 we have
 \begin{equation*}
  \mu_{t+\Delta \tau }= e^{-\a x^{\gamma}\Delta \tau } \mu_t
  +  \sum_{n=0}^{\infty} (\a \Delta \tau )^n
  \dst\int_0^{\infty} \ell^{n\gamma}a_n\left(\dfrac{x}{\ell}\right)\mu_t(\ell) \dfrac{d\ell}\ell{},
 \end{equation*}
}

We apply the Mellin transform to both sides of the above equality and use Proposition \ref{prop:MellinMC}: it follows
\begin{equation*}
 U(t+\Delta \tau ,s) = \Mag{W(t,\Delta \tau , s) }
  +  \sum_{n=0}^{\infty}(\a\Delta \tau )^n U(t,s+n\gamma)A_n(s),
\end{equation*}
where we denote by $A_n$ the Mellin transform of the measure $a_n$. 
Passing \eqref{induction} into the Mellin coordinates, 
the sequence $A_n$ satisfies 
\begin{equation*}
 A_0=0, \quad A_{n+1}(s) = \dfrac{1}{n+1} \left(( K(s)-1) A_n(s+\gamma) +\dfrac{(-1)^n}{n!}K(s) \right) .
\end{equation*}
By induction, 
we deduce 
\begin{equation*}
 A_n(s) = \dfrac{1}{n!}\left((-1)^{n-1}K(s) +  \sum_{j=1}^{n-1}(-1)^{n-1-j}K(s+j\gamma) 
 \prod_{m=0}^{j-1} (K(s+m\gamma)-1) \right),
\end{equation*}
and 
Lemma \ref{lemma:MellinPS} is proved.
    \end{proof}

\subsection{A reconstruction for $K$ using short times}
    
    \textcolor{black}{Since $\kappa$ {is} supported on $[0,1]$, {it follows that} $K(s+n\gamma) \to 0$ as $n\to \infty$,  {and then}  an approximation formula for $K$ {may be obtained} by {truncation} at $n=1$ of the second term at the right-hand side of (\ref{U_power_serie_inter}). To this end, let us give the following definitions.}
 \begin{definition}[Approximation formula for the Mellin transform of the kernel]
\label{def:Kest}
For $s\in \C$, we denote
 \begin{align}
& K^{est}(s,t,\Delta \tau ) 
=\dfrac{U(t+\Delta \tau, s) - W(t,\Delta \tau , s) }{\a \Delta \tau  U(t,s+\gamma)}\nonumber \\
&R(s,t,\Delta \tau )=  K(s) - K^{est}(s,t,\Delta \tau). \label{reco2}
\end{align}
\end{definition}
\textcolor{black}{
The error term  $R(s,t,\Delta \tau )$ may be estimated uniformly for $s$ on some vertical strip of the complex plane such that $|\Im m (s)|>V$ for $V>0$ large enough. This
 requires some further regularity on the kernel $\kappa $, the initial data $\mu_0$  and the solution $\mu $ that ensure that $U$ and $K$ decay fast enough at infinity.
 }
\textcolor{black}{
\begin{enumerate}[label={\bf Hyp}-\arabic*]
\setcounter{enumi}{2}
\item \label{hyp3} 
There exists an interval $I\subset (0, \infty)$ 
such that $\kappa $ and the function
$x\mapsto s^{-1}x^{s}\kappa (x)$ are absolutely continuous functions on $x\in [0, 1]$, for all $s\in S_I$, where
 \begin{equation*}
 S_I=\left\{s\in \C;\,\Re e(s)\in I \right\}.
 \end{equation*}
\item \label{hyp4} Let $\mu _0\equiv u_0\in \mathcal{C}^3([0,\textcolor{black}{L}])$ and  either $u_0(\textcolor{black}{L})>0$, or $u_0(\textcolor{black}{L})=0$ and $u_0'(\textcolor{black}{L})<0$.
\end{enumerate}
The decay at infinity of  the Mellin transform $U$ follows from the condition  \eqref{hyp4} on $u_0$  thanks to the following  lemma whose proof is  postponed until  the end of Section \ref{sec:reco}.
}

\begin{lemma}[Regularity and support of the solution to the fragmentation equation]
 \label{lemma:prop}
 Assume the fragmentation kernel $\kappa$ satisfies \eqref{hyp1}. Take
$u_0 \in \mathcal{C}^3([0,\textcolor{black}{L}])$ such that $\supp(u_0)=[0,\textcolor{black}{L}]$.
Then, if we denote $\mu = u$ the solution to the fragmentation equation \eqref{eq:frag} with $\mu _0=u_0$, it holds

\begin{enumerate}
 \item The function $x \to u(t,x)$ is in $\mathcal{C}^3([0,\textcolor{black}{L}])$ for all $t>0$.
 \label{1}
 \item $\supp(u(t,.)) = [0,\textcolor{black}{L}]$.
 \item If $u_0(\textcolor{black}{L})>0$, then for all $t>0$, $u(t,\textcolor{black}{L})= e^{-\a \textcolor{black}{L}^{\gamma}t}u_0(\textcolor{black}{L})>0$.
 
 If $u_0(\textcolor{black}{L})=0$ and $u'_0(\textcolor{black}{L})<0$,
 then $u(t,\textcolor{black}{L})=0$ and $\p_x u(t,\textcolor{black}{L})= e^{-\a \textcolor{black}{L}^{\gamma}t}u'_0(\textcolor{black}{L})<0$ for all $t>0$.
\end{enumerate}
\end{lemma}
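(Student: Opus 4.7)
Proof plan. I would address the three claims in the order (3), (2), (1), relying throughout on the series decomposition
\[
u(t,x)=e^{-\alpha x^\gamma t}u_0(x)+\sum_{n\geq 1}(\alpha t)^n c_n(x),\qquad c_n(x)=x^{n\gamma}\int_0^1 z^{-n\gamma-1}u_0(x/z)\,da_n(z),
\]
provided by Theorem~\ref{thm:power_serie}, where $u_0$ is extended by zero outside $[0,L]$ so that the integrand is automatically supported on $\{z\geq x/L\}$.

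For (3), the key preliminary fact is that $a_n(\{1\})=0$ for every $n\geq 1$: this follows by induction from $\kappa(\{1\})=0$ (in~\ref{hyp1}) through the recursion~\eqref{induction}, and is in any case implied by the uniqueness of the decomposition $\mu^F_t=e^{-\alpha t}\delta(x-1)+v_t$ with $v_t\perp\delta(x-1)$ used in the proof of Theorem~\ref{thm:power_serie}. Since $u_0(L/z)=0$ for $z<1$, the integral defining $c_n(L)$ localizes at $z=1$ and gives $c_n(L)=L^{n\gamma}u_0(L)\,a_n(\{1\})=0$, whence $u(t,L)=e^{-\alpha L^\gamma t}u_0(L)$, which is positive when $u_0(L)>0$. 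An analogous localization shows that $\partial_x c_n(L)$ reduces to a combination of $u_0(L)\,a_n(\{1\})$ and $u_0'(L)\,a_n(\{1\})$, both vanishing, so $\partial_x c_n(L)=0$. Therefore $\partial_x u(t,L)=\partial_x[e^{-\alpha x^\gamma t}u_0(x)]_{x=L}$, which reduces to $e^{-\alpha L^\gamma t}u_0'(L)<0$ when $u_0(L)=0$.

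For (2), the inclusion $\supp(u(t,\cdot))\subset[0,L]$ is Theorem~\ref{thm:WP_BV}. Theorem~\ref{thm:positivity} gives $\sum_{n\geq 1}(\alpha t)^n c_n\geq 0$, hence $u(t,x)\geq e^{-\alpha x^\gamma t}u_0(x)\geq 0$, so every $x$ with $u_0(x)>0$ satisfies $u(t,x)>0$, and taking closures yields $\supp(u(t,\cdot))\supset\supp(u_0)=[0,L]$. For (1), I would differentiate $c_n$ up to order three under the integral sign, using $u_0\in\mathcal{C}^3([0,L])$ and the support constraint $z\geq x/L$; combined with the estimate $\|a_n\|_{TV}\leq(N+2)^n/n!$ from~\eqref{u_n}, this yields a bound of the form $\|c_n\|_{\mathcal{C}^3([x_0,L])}\leq C(t,x_0,\gamma,L,\|u_0\|_{\mathcal{C}^3})\,(N+2)^n/n!$ for every $x_0>0$, and hence uniform convergence of the $\mathcal{C}^3$-series on every compact subset of $(0,L]$.

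The delicate remaining step, which I expect to be the technical heart of the proof, is to upgrade the regularity up to $x=0$: the derivative bound above degenerates by a factor $(x/L)^{-k}$ as $x\to 0$. Restoring $\mathcal{C}^3$ there would require exploiting the balance $\int z\,d\kappa(z)=1$ in~\ref{hyp1} to cancel the leading singularity against the Taylor expansion of $u_0$ at $0$, or equivalently running a contraction argument on the Duhamel form of~\eqref{eq:frag} in a weighted $\mathcal{C}^3$ space. The endpoint analysis at $x=L$ and the support statement, by contrast, are direct consequences of the power-series representation together with $\kappa(\{1\})=0$ and the positivity/support results already proved.
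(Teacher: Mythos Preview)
Your proposal is correct and follows essentially the same route as the paper: all three claims are read off the power-series representation~\eqref{representation_solution}, with regularity obtained by differentiating term by term under the integral using the bound~\eqref{u_n}, and the endpoint values at $x=L$ following from $\supp(a_n)\subset[0,1]$ together with $\supp(u_0)\subset[0,L]$. The only differences are cosmetic: for (2) the paper derives $u(t,y)\geq e^{-\alpha y^\gamma t}u_0(y)$ directly from the differential inequality $\partial_t u\geq -\alpha x^\gamma u$ (the gain term being nonnegative) rather than from positivity of the series tail via Theorem~\ref{thm:positivity}, and for (3) it does not spell out $a_n(\{1\})=0$ explicitly but simply observes that the convolution integrals vanish at $x=L$.

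Your concern about $x=0$ is more scrupulous than the paper itself: the paper's argument, like yours, only establishes $\mathcal{C}^3$-regularity on $[x_0,L]$ for every $x_0>0$ and performs no additional cancellation near the origin; in particular the balance condition $\int z\,d\kappa(z)=1$ is never invoked. This is harmless for the only use of the lemma (the integrations by parts in the proof of Theorem~\ref{thm:reco}), where the boundary contributions at $x=0$ vanish because $w>0$ and $u(t,\cdot)$ and its derivatives are bounded, so you need not pursue the ``delicate remaining step'' you describe.
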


For any interval $I\subset (0, \infty)$ and  $V>0$ let us define the domain
\begin{align*}
D_{I, V}=\{s\in \C;\,\Re e s\in I,\,|\Im m s|>V \}    
\end{align*}
We have then the following
\textcolor{black}{
\begin{theorem}[Reconstruction formula for $K$]
\label{thm:reco}
Suppose that  the fragmentation kernel  $\kappa $  satisfies \eqref{hyp1} and \eqref{hyp3} and  $u_0$ satisfies \eqref{hyp2}  and \eqref{hyp4}. Then, the following holds.\\
(i) For all $T>0, \tau _0>0$   and $V>0$ sufficiently large, there exists  a constant $C>0$ depending on $\alpha , \gamma, V, I, T, \tau _0$, and $\textcolor{black}{L}$, such that for all $t\in (0, T)$ and all $\Delta \tau \in (0, \tau _0)$
\begin{equation}
 \label{thm:recoE1}
|R(s,t,\Delta \tau )|
\leq 
\dfrac{C\a \Delta \tau }{|s|}, 
\quad  \forall s\in D _{I, V}.
\end{equation}
(ii) For all $T>0$, all $\tau _0>0$ and $s\in \R$ such that $s>1$ there exists a constant $C=C(t, s, \tau _0)>0$ such that 
\begin{align}
 \label{thm:recoE15}
|R(s, t, \Delta \tau )|\le C\alpha \Delta \tau ,\,\forall t\in (0, T),\,\forall \Delta \tau\in (0, \tau _0)      
\end{align}
\end{theorem}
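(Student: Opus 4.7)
The plan is to read off an exact expression for $R(s,t,\Delta\tau)$ by isolating the $n=1$ term in the power series of Lemma \ref{lemma:MellinPS}. Indeed, for $n=1$ the inner sum collapses to $K(s)$, so dividing \eqref{U_power_serie_inter} by $\alpha\Delta\tau\,U(t,s+\gamma)$ yields $K^{est}=K + (\text{tail})/(\alpha\Delta\tau\,U(t,s+\gamma))$, and hence
\begin{equation*}
R(s,t,\Delta\tau) = -\frac{1}{\alpha\Delta\tau\,U(t,s+\gamma)}\sum_{n=2}^{\infty}\frac{(\alpha\Delta\tau)^n}{n!}\,U(t,s+n\gamma)\sum_{j=0}^{n-1}(-1)^{n-1-j}K(s+j\gamma)\prod_{m=0}^{j-1}\bigl(K(s+m\gamma)-1\bigr).
\end{equation*}
Everything then reduces to (a) a suitable upper bound on this series tail, and (b) a lower bound on the denominator $|U(t,s+\gamma)|$.

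For assertion (i), the idea is to use the regularity available on $\kappa$ and on $u(t,\cdot)$ to perform integration by parts in the Mellin integrals and produce decay in $|s|$. Assumption \eqref{hyp3} permits one integration by parts in $K$, giving $|K(s+j\gamma)|\leq C/|s|$ uniformly for $s\in D_{I,V}$. By Lemma \ref{lemma:prop}, $u(t,\cdot)\in\mathcal{C}^3([0,L])$ for all $t\geq 0$, so repeated integrations by parts in $U(t,\cdot)$ yield $|U(t,s+n\gamma)|\leq C(T,L)/|s|$ uniformly in $n\geq 0$ and $t\in[0,T]$. Crucially, the boundary information provided by Lemma \ref{lemma:prop}, namely $u(t,L)=e^{-\alpha L^\gamma t}u_0(L)$ or $\partial_xu(t,L)=e^{-\alpha L^\gamma t}u_0'(L)$ under hypothesis \eqref{hyp4}, fixes the exact leading order of $U(t,\cdot)$ at $|\Im m\,s|\to\infty$ and hence produces a matching lower bound $|U(t,s+\gamma)|\geq c(T,L,u_0)/|s|^k$ with $k\in\{1,2\}$, uniformly on $D_{I,V}$ once $V$ is taken large enough. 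Plugging these estimates into the tail formula, the $n$-th term is bounded by $(\alpha\Delta\tau)^n\, n\,(1+C/V)^{n-1}/(n!\,|s|^2)$ times a constant, and summing using the $1/n!$ factor yields a tail of order $(\alpha\Delta\tau)^2/|s|^2$ for $\Delta\tau\in(0,\tau_0)$. Dividing by $\alpha\Delta\tau\,|U(t,s+\gamma)|$ gives the announced $C\alpha\Delta\tau/|s|$.

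For assertion (ii), with $s>1$ real, crude bounds are enough. Since $\supp\kappa\subset[0,1]$ and $s+j\gamma\geq 1$, we have $|K(s+j\gamma)|\leq N$, so each inner sum is controlled by $\sum_{j=0}^{n-1}N(N+1)^j\leq (N+1)^n$. The support condition $\supp\mu_t\subset[0,L]$ combined with \eqref{pgcd2} gives $|U(t,s+n\gamma)|\leq C(T,L)L^{n\gamma}$. The resulting series is dominated by $\sum_{n\geq 2}(\alpha\Delta\tau L^\gamma(N+1))^n/n!$, which is of order $(\alpha\Delta\tau)^2$. For the denominator, positivity of $\mu_t$ (Theorem \ref{thm:positivity}) together with \eqref{hyp4} ensures $U(t,s+\gamma)>0$; continuity of $t\mapsto U(t,s+\gamma)$ gives a positive lower bound on any closed subinterval of $(0,T]$, hence a constant $C(t,s,\tau_0)$ as stated.

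The main technical obstacle is clearly step (b) for assertion (i): obtaining a lower bound on $|U(t,s+\gamma)|$ that is uniform in $t\in(0,T]$ and in $s\in D_{I,V}$. This is exactly where hypothesis \eqref{hyp4} is unavoidable, since without the non-vanishing of $u_0$ or $u_0'$ at the endpoint $L$ one loses the identification of the leading-order decay of $U(t,s)$ and can no longer safely divide by $U(t,s+\gamma)$ in \eqref{reco2}. Everything else is bookkeeping on convergent series with the $1/n!$ cushion.
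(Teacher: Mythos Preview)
Your approach is essentially the same as the paper's: write $R$ as the tail $n\ge 2$ of the Mellin series, extract one power of $1/|s|$ from $K$ via \eqref{hyp3}, and control $U(t,s+\gamma)$ from below by identifying the exact leading boundary term in the integration-by-parts expansion, which is precisely what \eqref{hyp4} and Lemma~\ref{lemma:prop} provide. Part (ii) is also handled the same way: crude moment bounds on $K$ and $U$, positivity of $U(t,s+\gamma)$, and continuity in $t$.

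There is one genuine inaccuracy in your sketch. The claim that $|U(t,s+n\gamma)|\le C(T,L)/|s|$ \emph{uniformly in $n$} is false in general. After one integration by parts,
\[
|U(t,w+n\gamma+iv)|\le \frac{1}{|v|}\Big(|u(t,L)|L^{w+n\gamma}+\int_0^L\big|\partial_x\big(u(t,x)x^{w+n\gamma}\big)\big|\,dx\Big),
\]
and the right-hand side carries an unavoidable factor $L^{n\gamma}$ (and a polynomial factor in $n$ from differentiating $x^{w+n\gamma}$). The paper deals with this by bounding the \emph{ratio} directly: writing the two-term expansion $U(t,w+\gamma+iv)=C/(iv)+C'/(iv)^2$ and $U(t,w+n\gamma+iv)=nL^{(n-1)\gamma}\big(C/(niv)+C''(n)/(iv)^2\big)$ with the \emph{same} nonzero leading constant $C=u(t,L)L^{w+\gamma}e^{iv\ln L}$, which gives
\[
\left|\frac{U(t,s+n\gamma)}{U(t,s+\gamma)}\right|\le C\,n\,L^{(n-1)\gamma},\qquad s\in D_{I,V},
\]
(with one more order of expansion when $u_0(L)=0$). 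This exponential-in-$n$ growth is then killed by the $1/n!$, exactly as you anticipated, so your final bound $C\alpha\Delta\tau/|s|$ survives; but the intermediate ``uniform in $n$'' statement needs to be replaced by something like $|U(t,s+n\gamma)|\le C\,p(n)\,L^{n\gamma}/|s|$, or one should bound the ratio directly as the paper does.
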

}
\begin{proof}[Proof of Thereom \ref{thm:reco}]
We first prove (i). Combining \eqref{reco2} with Lemma \ref{lemma:MellinPS}, we 
have the expression for the rest $R$
\begin{equation}
\label{R2}
R(s,t,\Delta \tau )=  
\dfrac{1}{\a \Delta \tau }\sum_{n=2}^{\infty} \dfrac{(\a \Delta \tau )^n}{n!} \dfrac{U(t,s+\gamma n )}{  U(t,s+\gamma)}
\sum_{j=0}^{n-1}(-1)^{n-1-j}K(s+j\gamma) 
\prod_{m=0}^{j-1} (K(s+m\gamma)-1).
\end{equation}
{\bf Step 1. Estimate for $K$}.
We prove here that for some $\tilde{C}>0$ 
depending on $I$ it holds
\begin{equation}
\label{Step1}
 |  K(s) | \leq \dfrac{\tilde{C}\MT{(I)}}{1+|s|},\quad \forall s\in  S_I.
\end{equation}
 \textcolor{black}{By (\ref{hyp1}), $K(s)$ is well defined and  analytic for $\Re e s>0$. Take $s\in I$. Since by (\ref{hyp3}), $\kappa \in C([0,1])$ and $\Re e(s)>0$ it follows that $x^{s}\kappa (x)\underset{x\to 0}\longrightarrow 0$. And since $\kappa $ and $x\to s^{-1}x^{s}\kappa (s)$ are absolutely continuous on $[0, 1]$
 }
\begin{equation*}
    K(s) = \dst\int_0^1 \kappa(x)x^{s-1} dx
    =\dfrac{\kappa(1)}{s} -\dfrac{1}{s}
    \dst\int_0^1 \kappa'(x)x^{s} dx,\,\,\forall s \in S_I.
    \end{equation*}
\textcolor{black}{Because $\kappa $ is absolutely continuous on $[0, 1]$ there exists two {\color{vert} non-de}creasing functions  on $[0, 1]$, $\kappa _1$ and $\kappa _2$, such that $\kappa =\kappa _1-\kappa _2$ on $[0, 1]$, $\kappa '_i$ are measurable and non negative on $[0, 1]$ for $i=1, 2$ and 
\begin{align*}
&\int _0^1{\color{vert}\kappa}'_i(x)\le {\color{vert}\kappa}_i(1)-{\color{vert}\kappa}_i(0),\,i=1, 2\\
&\int _0^1 |{\color{vert}\kappa}'(x)|dx\le \int _0^1 ({\color{vert}\kappa}_1'(x)+{\color{vert}\kappa}'_2(x))dx\\
&\hskip 2.3cm \le {\color{vert}\kappa}_1(1)+{\color{vert}\kappa}_2(1)-{\color{vert}\kappa}_1(0)-{\color{vert}\kappa}_2(0).
\end{align*}
Therefore,
\begin{align*}
    |K(s)|& \leq \dfrac{\kappa(1)}{|s|}
    +\dfrac{1}{|s|} \dst\int_0^1 |\kappa'(x)|dx
   ,\,\forall s\in S_I\\
   &\leq \dfrac{\kappa(1)}{|s|}+\dfrac{1}{|s|}
   ({\color{vert}\kappa}_1(1)+{\color{vert}\kappa}_2(1)-{\color{vert}\kappa}_1(0)-{\color{vert}\kappa}_2(0))
\end{align*}
from where \eqref{Step1} follows.}
\vspace{0.5cm}
 {\bf Step 2. Estimate for $U$}.
We prove here that for every $T>0$ and for $V$ large enough, there exists  a constant $C=C\MT{(\textcolor{black}{L},T, V,\alpha, \gamma)}>0$ such that for all $t\in (0, T)$ and $n\geq 2$,
\begin{equation}
\label{Step2}
   \left|\dfrac{ U(t,w +n\gamma+ iv)}{ U(t,w +\gamma+ iv)}\right| \leq 
   C\MT{(\textcolor{black}{L},T,\alpha, \gamma)} n(n-1)  \textcolor{black}{L}^{(n-1)\gamma},\,\,
   \forall s\in D _{I, V }.
\end{equation} 

We follow, for $|v|$ large, 
the calculation of \cite[Chapter IV, Section 4]{Dieudonne_infinitesimal} where the 
 stationary phase method is used 
 to study the behaviour of oscillatory integrals.
 For $w >0$, we have for $v\neq 0$
 \begin{equation*}
  \begin{aligned}
   U(t,w+iv)= \dst\int_0^{\textcolor{black}{L}}u(t,x)x^{w-1} x^{iv}  dx
   &=\dst\int_0^{\textcolor{black}{L}}u(t,x)x^{w-1} e^{iv\ln(x)}  dx\\
   &=\dfrac{1}{iv}\dst\int_0^{\textcolor{black}{L}} u(t,x) x^{w} \dfrac{d}{dx}\left( e^{iv\ln(x)} \right) dx.
  \end{aligned}
 \end{equation*}
 since $\dfrac{d}{dx} (e^{iv\ln(x)} )= \dfrac{iv}{x}e^{iv\ln(x)}$.
We perform an integration by part and we obtain

  \begin{equation*}
  \begin{aligned}
   U(t,w+iv)=\dfrac{1}{iv}u(t,\textcolor{black}{L})\textcolor{black}{L}^{w}e^{iv\ln(\textcolor{black}{L})}
   - \dfrac{1}{iv}\dst\int_0^{\textcolor{black}{L}}e^{iv\ln(x)}  \dfrac{\p}{\p x}\left(u(t,x) x^{w}  \right) dx,
  \end{aligned}
 \end{equation*}
 
 which we rewrite, using the same trick than above

   \begin{equation*}
  \begin{aligned}
   U(t,w+iv)=\dfrac{1}{iv}u(t,\textcolor{black}{L})M^{w}e^{iv\ln(\textcolor{black}{L})}
  - \left( \dfrac{1}{iv}\right)^2\dst\int_0^{\textcolor{black}{L}}   x \dfrac{\p}{\p x}\left( u(t,x) x^{w}\right) \dfrac{d}{dx}\left( e^{iv\ln(x)} \right) dx.
  \end{aligned}
 \end{equation*}
 
 We perform another integration by part to obtain

\begin{equation*}
  \begin{aligned}
   U(t,w+iv)=
   \dfrac{1}{iv}u(t,\textcolor{black}{L})^{w}e^{iv\ln(L)}
  &-  \left( \dfrac{1}{iv}\right)^2 L^{w}\left(  \textcolor{black}{L}\dfrac{\p}{\p x}  u(t,\textcolor{black}{L}) + w u(t,M) \right)
  e^{iv \ln( \textcolor{black}{L})}\\
  &+ \left( \dfrac{1}{iv}\right)^2\dst\int_0^{\textcolor{black}{L}}  \dfrac{\p}{\p x} \left( x \dfrac{\p}{\p x}\left( u(t,x) x^{w}\right)\right)
  e^{iv\ln(x)}  dx.
  \end{aligned}
 \end{equation*}
 The third term of the right-hand side above 
 can be expanded using
 
 \begin{equation*}
     \begin{aligned}
    \dfrac{\p}{\p x} \left( x \dfrac{\p}{\p x}\left( u(t,x) x^{w}\right)\right)
=w^2 x^{w-1}u(t,x) +  x^{w} (1+2w) \dfrac{\p}{\p x} u(t,x)+  x^{w+1} \dfrac{\p^2}{\p x^2} u(t,x).
     \end{aligned}
 \end{equation*}

 Then
we have

\begin{equation}
\label{fraction}
\left\{
    \begin{aligned}
    & U(t,w+\gamma+iv) =\dfrac{C(\MT{L},t,\MT{\alpha,\gamma},w,v)}{iv}+
     \dfrac{C'(\MT{L},t,\MT{\alpha,\gamma},w,v)}{(iv)^2},\\ 
    & U(t,w+n\gamma+iv) = n\textcolor{black}{L}^{(n-1)\gamma}\left(\dfrac{C(\MT{L},t,\MT{\alpha,\gamma},w,v)}{niv}+
     \dfrac{C''(\MT{L},t,\MT{\alpha,\gamma},w,v,n)}{(iv)^2} \right)
    \end{aligned}
    \right.
\end{equation}

 for some complex constants $C(\MT{L},t,\MT{\alpha,\gamma},w,v)$, $C'(\MT{L},t,\MT{\alpha,\gamma},w,v)$ and $C''(\MT{L},t,\MT{\alpha,\gamma},w,v,n)$ 
defined as
\begin{equation*}
    \begin{aligned}
    & C(\MT{L},t,\MT{\alpha,\gamma},w,v)=   u(t,\textcolor{black}{L})  \textcolor{black}{L}^{w+\gamma} e^{iv\ln(\textcolor{black}{L})}, \\
     &C'(\MT{L},t,\MT{\alpha,\gamma},w,v)=-\textcolor{black}{L}^{w+\gamma}\left(  \textcolor{black}{L}\dfrac{\p u}{\p x}  (t,\textcolor{black}{L}) +( w+\gamma) u(t,\textcolor{black}{L}) \right)
  e^{iv \ln( \textcolor{black}{L})},\\
   &  +\int_0^{\textcolor{black}{L}}  \left(
    ( w+\gamma)^2x^{w+\gamma-1}u(t,x) +  x^{w+\gamma} (1+2(w+\gamma))\dfrac{\p}{\p x} u(t,x)+  x^{w+\gamma+1} \dfrac{\p^2}{\p x^2} u(t,x)\right)  e^{iv\ln(x)} dx \\
  &  C''(\MT{L},t,\MT{\alpha,\gamma},w,v,n) =\textcolor{black}{L}^{w+\gamma}\left(  \textcolor{black}{L}\dfrac{\p}{\p x}  u(t,\textcolor{black}{L}) +\dfrac{w+n\gamma}{n} u(t,\textcolor{black}{L}) \right)
  e^{iv \ln( \textcolor{black}{L})}\\
   &  + \int\limits_0^{\textcolor{black}{L}}  \left(
    (w+n\gamma)^2 \left(1+ x^{w+n\gamma-1}\right) u(t,x) +  x^{w+n\gamma} (1+2(w+n\gamma))\dfrac{\p u}{\p x} (t,x)+  x^{w+n\gamma+1} \dfrac{\p^2u}{\p x^2} (t,x)\right)  dx.\\
    \end{aligned}
\end{equation*}

If $u_0(\textcolor{black}{L})>0$, then Lemma \ref{lemma:prop} guarantees that 
 $u(t,\textcolor{black}{L})>0$ as well.
 Then we have the following estimates on $C, C'$ and $C''$
 \begin{align*}
 &0 <C_0 \leq |C\MT{(L,t,\alpha, \gamma,w,v)}|\leq C_1, \quad   |C'\MT{(L,t,\alpha, \gamma,w,v)}|\leq C_2, \quad w\in\MT{I},\;  t\in[0,T], \; v\in \mathbb{R},\\
 &\,\hbox{and}\quad |C''\MT{(L,t,\alpha, \gamma,w,v,n)}|\leq C_3,\,\,\,\, w\in\MT{I},\;  t\in[0,T], \; v\in \mathbb{R}, \; n\geq 1.
 \end{align*}

Then, using \eqref{fraction}, there exists $V>0$ such that for $|v|\geq V$ and $w\in I$, 
\begin{align}
\label{geqV}
\left|\dfrac{ U(t,w +n\gamma+ iv)}{ U(t,w +\gamma+ iv)}\right|
&\leq n\textcolor{black}{L}^{(n-1)\gamma} \dfrac{|C i v+ C' |}{|Civ + C''|}\nonumber\\
&\leq 
n\textcolor{black}{L}^{(n-1)\gamma}  \left( 1+ \dfrac{|C''-C'|}{|C iv + C'|} \right)
\leq n\textcolor{black}{L}^{(n-1)\gamma}  C(V).
\end{align}
for some constant $C(V)>0$ that depends on $V$, and formula \eqref{Step2} follows.

Now if $u_0(\textcolor{black}{L})=0$, then Lemma \ref{lemma:prop} guarantees that 
 $u(t,\textcolor{black}{L})=0$ as well. Thus
 
   \begin{equation*}
  \begin{aligned}
   U(t,w)=
- \left( \dfrac{1}{iv}\right)^2\dst\int_0^{\textcolor{black}{L}}   x \dfrac{\p}{\p x}\left( u(t,x) x^{w}\right) \dfrac{d}{dx}\left( e^{iv\ln(x)} \right) dx.
  \end{aligned}
 \end{equation*}

 In that case, $u_0'(\textcolor{black}{L})<0$ so that 
 Lemma \ref{lemma:prop} guarantees that 
 $\p_xu(t,\textcolor{black}{L})<0$ as well, and 
 we go one step further in the expansion and write
 
   \begin{equation*}
  \begin{aligned}
   U(t,w)=
&- \left( \dfrac{1}{iv}\right)^2   \textcolor{black}{L}^{w+1}  e^{iv\ln(\textcolor{black}{L})}\dfrac{\p}{\p x}\left( u(t,x) x^{w}\right)\Big|_{x=\textcolor{black}{L}} 
\\
&+\left( \dfrac{1}{iv}\right)^3 
\textcolor{black}{L}  e^{iv\ln(\textcolor{black}{L})}\dfrac{\p}{\p x}\left( x \dfrac{\p}{\p x} \left(u(t,x) x^{w}\right)\right)\Big|_{x=\textcolor{black}{L}} 
\\
+&\left( \dfrac{1}{iv}\right)^3 \dst\int_0^{\textcolor{black}{L}}
\dfrac{\p}{\p x} \left(x \dfrac{\p}{\p x}\left( x \dfrac{\p}{\p x} \left(u(t,x) x^{w}\right)\right)\right) e^{iv\ln(x)} dx
  \end{aligned}
 \end{equation*}
 Using the same types of arguments than above,  formula \eqref{Step2} holds again.

{\bf Step 3. Estimate for $R$}.

Using formula \eqref{R2} and the triangle inequality,
we have 
\begin{equation*}
    |R(s,y,\Delta \tau )| \leq 
 \a \Delta \tau   
 \sum_{n=2}^{\infty} \dfrac{(\a \Delta \tau )^{n-2}}{n!}\left| \dfrac{U(t,s+\gamma n )}{ U(t,s+\gamma)}\right| \sum_{j=0}^{n-1}\left|K(s+j\gamma)\right| 
 \prod_{m=0}^{j-1} \left|K(s+m\gamma)-1\right|.
\end{equation*}

Using now \eqref{Step1} and \eqref{Step2}
we obtain for $\Re(s)\in I$ and $\Im(s)\in \R$

\begin{equation}
\label{Step3E1}
    |R(s,y,\Delta \tau )| \leq 
\frac{ \a \Delta \tau   \textcolor{black}{L}^{\gamma}}{1+|s|}
 \sum_{n=2}^{\infty} \dfrac{(\a \Delta \tau )^{n-2}}{(n-2)!}\left(\textcolor{black}{L}^{\gamma}\right)^{n-2} \sum_{j=0}^{n-1}\tilde{C}^{j+1}.
\end{equation}

which implies

\begin{equation*}
    |R(s,y,\Delta \tau )| \leq 
 \a \tau _0  \dfrac{\textcolor{black}{L}^{\gamma} \tilde{C}}{1+|s|}
 \sum_{n=2}^{\infty} \dfrac{(\a \Delta \tau )^{n-2}}{(n-2)!}\left(\textcolor{black}{L}^{\gamma}\right)^{n-2} \dfrac{1-\tilde{C}^n}{1-\tilde{C}},
\end{equation*}
and Theorem \ref{thm:reco}
is proved for $C= \textcolor{black}{L}^{\gamma} \tilde{C}\sum_{n=2}^{\infty} \dfrac{(\a \tau _0)^{n-2}}{(n-2)!}\left(\textcolor{black}{L}^{\gamma}\right)^{n-2} \dfrac{1-\tilde{C}^n}{1-\tilde{C}}<\infty$. 

\textcolor{black}{In order to prove now (ii),  suppose that $s\in \R$ is fixed such that $s>1$, estimate (\ref{Step1})  still holds. Moreover, since $\mu(t) \ge 0$ for all $t>0$ it follows that $U(t, s )>0$. Since  $U(\cdot, s)\in C(0, \infty)$ it follows that for all $T>0$ there exists a constant $C=C(T, s)>0$ such that $U(t, s+\gamma )>C^{-1}$ for all $t\in (0, t)$. It then follows, arguing as for (\ref{Step3E1}),
\begin{align*}
\left|\frac{U(t, s+n\gamma )}{U(t, s+\gamma )} \right|&\le 
C \textcolor{black}{L}^{\gamma n-1}||\mu (t)|| _{TV  }\\
|R(s,t,\Delta \tau )|&\le
C||\mu (t)|| _{TV  }\a \Delta \tau  \sum_{n=2}^\infty \frac{(\alpha \tau _0)^{n-2}}{n!} \sum_{j=0}^{n-1} \tilde{C}^{j+1}
\\
&\le \left(C||\mu (t)|| _{TV  } \sum_{n=2}^\infty \frac{(\alpha \tau _0)^{n-2}}{n!}
\dfrac{1-\tilde{C}^n}{1-\tilde{C}}\right) \a \Delta \tau 
\end{align*}
where $\tilde C$ comes from Step 1.
}
\end{proof}

The estimate in Theorem \ref{thm:reco} may be improved under stronger assumptions on $\kappa$. For example,

\begin{corollary}[A better estimate for kernels not allowing erosion]
\label{Col3}
\textcolor{black}{Suppose that  the hypothesis of Theorem \ref{thm:reco} hold. Suppose moreover that $\kappa '$ and  $x\mapsto \kappa '(x)x^{s+1}$ are absolutely continuous on $x\in [0, 1]$, and $\kappa(1)=0$ for  $s\in S_I$ .}
Then, for all $T>0$ and $\tau _0>0$ there exists two constants $V>0$ and $C\MT{(L,T,\alpha, \gamma, I, V,  \tau _0)}>0$ such that for all $t\in (0, T)$, $\Delta \tau \in (0, \tau _0)$
\begin{equation}
\label{Col3E1}
|R(s,t,\Delta \tau )|
\leq 
\dfrac{C\MT{(L,T,\alpha, \gamma, I, V, \tau _0)} \a \Delta \tau }{ |s|^2},\, \forall s\in  D_{I, V}.
\end{equation}
\end{corollary}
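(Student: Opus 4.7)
The plan is to refine Step 1 of the proof of Theorem \ref{thm:reco} by performing one additional integration by parts on $K$, which will upgrade the decay on vertical strips from $O(1/|s|)$ to $O(1/|s|^2)$. Once this improved bound is available, Steps 2 and 3 of that proof carry over with only minor numerical changes: the ratio bound $|U(t,s+n\gamma)/U(t,s+\gamma)|\le C(V)\,n\,L^{(n-1)\gamma}$ from Step 2 depends only on the regularity of $\mu_0$ (Hyp-4) and not on $\kappa$, and the assembly in Step 3 is structurally identical.

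For the improved bound, the first integration by parts is exactly the one used in the proof of Theorem \ref{thm:reco} and produces
\begin{equation*}
K(s) \;=\; \frac{\kappa(1)}{s} \;-\; \frac{1}{s}\int_0^1 \kappa'(x)\,x^s\,dx,
\end{equation*}
where the boundary term at $x=0$ vanishes because $\Re(s)>0$ and $\kappa\in\mathcal{C}([0,1])$. The new hypothesis $\kappa(1)=0$ removes the first term, so
\begin{equation*}
K(s) \;=\; -\,\frac{1}{s}\int_0^1 \kappa'(x)\,x^s\,dx.
\end{equation*}
The second assumption, absolute continuity of $x\mapsto\kappa'(x)x^{s+1}$ on $[0,1]$, legitimates a second integration by parts, yielding
\begin{equation*}
K(s) \;=\; -\,\frac{\kappa'(1)}{s(s+1)} \;+\; \frac{1}{s(s+1)}\int_0^1 \kappa''(x)\,x^{s+1}\,dx,
\end{equation*}
the boundary at $x=0$ again vanishing since $\Re(s+1)>0$. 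Because $\kappa''\in L^1(0,1)$ (as $\kappa'$ is AC), $|x^{s+1}|\le 1$ on $[0,1]$ for $s\in S_I$, and $|s(s+1)|\ge |s|^2/2$ for $|s|$ large, we obtain constants $V_0>0$ and $\tilde{C}(I)>0$ such that
\begin{equation*}
|K(s)| \;\le\; \frac{\tilde{C}(I)}{|s|^2}, \qquad s\in D_{I,V_0}.
\end{equation*}

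Plugging this into the series representation \eqref{R2} of $R$ from the proof of Theorem \ref{thm:reco}, and using (a) the ratio bound from Step 2, (b) the inequality $|s+j\gamma|\ge |s|$ for $j\ge 0$ (so that $|K(s+j\gamma)|\le \tilde{C}/|s|^2$), and (c) the uniform bound $|K(s+m\gamma)-1|\le 2$ valid for $|s|\ge V$ with $V$ sufficiently large, one controls the inner sum by $(\tilde{C}/|s|^2)\sum_{j=0}^{n-1}2^j\le (2\tilde{C}/|s|^2)\,2^{n-1}$. The resulting series in $n$ is of exponential type in $\alpha\Delta\tau L^\gamma$ and therefore convergent and bounded uniformly for $\Delta\tau\in(0,\tau_0)$, which gives the desired estimate
\begin{equation*}
|R(s,t,\Delta\tau)| \;\le\; \frac{C\,\alpha\Delta\tau}{|s|^2}
\end{equation*}
for all $t\in(0,T)$, $\Delta\tau\in(0,\tau_0)$ and $s\in D_{I,V}$. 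The only genuinely new ingredient is the second integration by parts made possible by the extra regularity and the condition $\kappa(1)=0$; the remaining work is bookkeeping identical in structure to Step 3 of Theorem \ref{thm:reco}.
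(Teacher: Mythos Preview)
Your proof is correct and follows essentially the same approach as the paper: the paper's own proof simply states that Step~1 is upgraded via a second integration by parts (using $\kappa(1)=0$ and the absolute continuity of $\kappa'$) to obtain $|K(s)|\le \bar C/(|s|^2+1)$, after which Steps~2 and~3 of Theorem~\ref{thm:reco} are reused verbatim. Your explicit bookkeeping in Step~3 (bounding $|K(s+m\gamma)-1|\le 2$ and summing the geometric series) is a mild variant of the paper's $\tilde C^{j+1}$ estimate but leads to the same conclusion.
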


\begin{proof}
 The proof is the same than the proof of Theorem \ref{thm:reco}, 
 except that the estimate for the Mellin transform in Step 1 becomes,
   \begin{align*}
  K(s)& = \dst\int_0^1 \kappa(x)x^{s-1} dx = -\dfrac{1}{s}\dst\int_0^1 \kappa'(x)x^{s} dx\\
 & = -\dfrac{1}{s(s+1)}\left( \kappa'(1) -\kappa'(0) - \dst\int_0^1 \kappa''(x) x^{s+1} dx\right),\,\forall s\in S_I
 \end{align*}
and thus for some $\bar{C}>0$
\begin{equation*}
 |K(s)| \leq \dfrac{\bar{C}}{|s|^2+1},\,\,\,\,\,\forall s\in S_I.
\end{equation*}
\end{proof}
\textcolor{black}{
A natural question arising from Theorem \ref{thm:reco} and Corollary \ref{Col3} is if, and in what sense the inverse Mellin transform of $K^{est}(t, \Delta \tau )$, $\mathcal M^{-1}\left(K^{est}(t, \Delta \tau )\right)$,  is an approximation of the kernel $\kappa$ itself. By (\ref{hyp3}), for all $s\in I$,
\begin{align}
\label{Hkappa1}
\sup_{x\in [0, 1]}x^s|\kappa (x)|<\infty,
\end{align}
and then, for any $s_0\in I$, (\ref{Hkappa1}) holds for all $s>s_0$. But (\ref{Hkappa1}) is not known to be true for $s<\inf\{\sigma; \sigma \in I\}$. 
\begin{theorem}
\label{prop3}
 Suppose that the hypothesis of Corollary \ref{Col3} are satisfied and denote $I=(a, b)$ for some $b>a\ge 0$. Then, for every $t>0$, every $\tau_0 >0$ and $\delta >0$ sufficiently small,  there exists a positive constant $C$ that depends on $t$, $\tau _0$ and $\delta $  such that, for all $s\in (a, b)$,
 \begin{align}
 \label{prop3E0}
\sup_{x\in [0, 1]}x^s\left|\kappa  (x)-\mathcal M^{-1}\left( K^{est}(t, \Delta \tau) \right)(x)\right|\le C\Delta \tau,\,\,\,\,\forall\, \Delta \tau\in (0, \tau _0) . 
\end{align}
\end{theorem}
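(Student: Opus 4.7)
The plan is to pass from the Mellin-variable bound on $R(\cdot,t,\Delta\tau) = K - K^{est}(\cdot,t,\Delta\tau)$ provided by Corollary~\ref{Col3} to a pointwise bound on $\kappa - \mathcal{M}^{-1}(K^{est}(t,\Delta\tau))$ via the Mellin inversion formula. The $1/|s|^2$ decay at infinity furnished by the stronger regularity hypotheses of Corollary~\ref{Col3} is exactly what makes the inversion integral absolutely convergent, and hence what makes the argument quantitative.

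First, I would write, for any admissible vertical contour $\Re(s')=c$ inside the common strip of analyticity of $K$ and $K^{est}(\cdot,t,\Delta\tau)$,
\[
\kappa(x) - \mathcal{M}^{-1}\bigl(K^{est}(t,\Delta\tau)\bigr)(x) = \frac{1}{2\pi i}\int_{c-i\infty}^{c+i\infty} R(s',t,\Delta\tau)\,x^{-s'}\,ds'.
\]
Fixing $s\in(a,b)$ and choosing $c=s$ (so that $|x^{-s'}|=x^{-s}$ along the contour), I would take absolute values and obtain
\[
x^s\bigl|\kappa(x) - \mathcal{M}^{-1}(K^{est}(t,\Delta\tau))(x)\bigr| \le \frac{1}{2\pi}\int_{-\infty}^{\infty} |R(s+iv,t,\Delta\tau)|\,dv,
\]
which removes the dependence on $x\in[0,1]$ and reduces the supremum estimate to a uniform $L^1$-bound of $R$ along vertical lines parametrised by $s$.

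Next, I would split the integral at $|v|=V$. On the tail $|v|\ge V$, Corollary~\ref{Col3} gives $|R(s+iv,t,\Delta\tau)| \le C\alpha\Delta\tau/|s+iv|^2 \le C\alpha\Delta\tau/v^2$, so the tail contributes at most $2C\alpha\Delta\tau/V = O(\Delta\tau)$. On the central band $|v|\le V$ I would work directly from the series representation
\[
R(s,t,\Delta\tau) = \frac{1}{\alpha\Delta\tau}\sum_{n=2}^{\infty}\frac{(\alpha\Delta\tau)^n}{n!}\frac{U(t,s+n\gamma)}{U(t,s+\gamma)}\sum_{j=0}^{n-1}(-1)^{n-1-j}K(s+j\gamma)\prod_{m=0}^{j-1}(K(s+m\gamma)-1)
\]
obtained from Lemma~\ref{lemma:MellinPS}: the leading $(\alpha\Delta\tau)^2/2!$ factor absorbs the $1/(\alpha\Delta\tau)$ prefactor, leaving an explicit $\Delta\tau$ and a geometrically convergent tail, provided each term is uniformly bounded on $\{s+iv:s\in(a,b),\,|v|\le V\}$. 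Upper bounds on $|K(s+j\gamma)|$ (from Step~1 of the proof of Theorem~\ref{thm:reco}) and on $|U(t,s+n\gamma)|\le L^{\Re(s)+n\gamma-1}\|\mu_t\|_{TV}$ are routine.

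The main obstacle is to secure a uniform positive lower bound on the denominator $|U(t,s+\gamma)|$ on the compact central band. I would argue that $U(t,\cdot+\gamma)$ is analytic on $\{\Re(\cdot)>-\gamma\}$ and strictly positive on the positive real axis, because $\mu_t\ge 0$ and $\supp(\mu_t)=[0,L]$ by Lemma~\ref{lemma:prop}; hence its zero set in the right half-plane is discrete and disjoint from the real axis. By continuity and compactness, on a sufficiently thin tubular neighbourhood $\{w+iv:w\in[a,b],\,|v|\le \delta\}$ of $[a,b]$, the modulus $|U(t,\cdot+\gamma)|$ is bounded below by a positive constant $c(t,\delta)$. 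Choosing $V=\delta$ in the splitting (so that the central band lies inside this zero-free tube) produces the central estimate $|R(s+iv,t,\Delta\tau)|\le C(t,\delta)\Delta\tau$, and combining with the tail estimate yields the desired bound $C(t,\tau_0,\delta)\Delta\tau$ in~\eqref{prop3E0}. This is precisely how the parameter $\delta$ enters the final constant: it parametrises the zero-avoiding neighbourhood of the real axis, while the admissible range $s\in(a,b)$ remains untouched.
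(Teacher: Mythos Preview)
Your overall strategy---Mellin inversion plus a tail/centre splitting---matches the paper's, but there is a genuine gap in how you handle the denominator $U(t,\cdot+\gamma)$ on the central band.

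The constant $V$ in Corollary~\ref{Col3} is \emph{not} at your disposal: the estimate $|R(s,t,\Delta\tau)|\le C\alpha\Delta\tau/|s|^2$ is asserted only for $s\in D_{I,V}$, i.e.\ for $|\Im s|>V$ with $V$ fixed and possibly large (it comes from Step~2 of the proof of Theorem~\ref{thm:reco}, where one needs $|v|$ large to control the ratio in~\eqref{geqV}). You therefore cannot ``choose $V=\delta$'' small. The central band you must cover is $\{s+iv:|v|\le V\}$ with $V$ large, and nothing prevents $U(t,\cdot+\gamma)$ from vanishing somewhere in that band at real part exactly $s$: the zeros are discrete and avoid the real axis, but they may well sit at height $|v|\in(\delta,V]$ on your chosen line $\Re(\cdot)=s$. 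Your thin-tube argument only clears the region $|v|\le\delta$, leaving $\delta<|v|\le V$ uncontrolled.

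The paper's remedy is to \emph{move the contour} rather than shrink the band. Since $U(t,\cdot+\gamma)$ is analytic, it has only finitely many zeros in the compact rectangle $\{\Re\in(a+\delta/3,a+2\delta),\,|\Im|\le 2V\}$; hence some closed subinterval $J\subset(a+\delta/2,a+\delta)$ has the property that $U(t,\cdot+\gamma)$ is zero-free on $S_J\cap\{|\Im|\le V\}$, giving a uniform lower bound there. One then runs the inversion integral along $\Re(s')=r\in J$, obtaining $|\kappa(x)-\mathcal M^{-1}(K^{est})(x)|\le C_\delta\Delta\tau\,x^{-r}$. The parameter $\delta$ now enters in a different way from what you propose: it fixes the location of the contour near $a$, and for a given $s\in(a,b)$ one picks $\delta$ small enough that $s>a+\delta>r$, whence $x^s\cdot x^{-r}=x^{s-r}\le 1$ on $[0,1]$ and~\eqref{prop3E0} follows.
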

\begin{proof}
By hypothesis, for all $T>0$ and $\tau _0>0$ there exist two constants $V>0$ and  $C>0$ such that (\ref{Col3E1}) holds
for $t\in [0, T]$, $\Delta \tau \in (0, \tau _0)$,  $s\in S_I$ and $|\Im m(s)|>V$. \\
Moreover, 
for each $t\in (0, T)$ the function $U(t, s+\gamma )$ is analytic on the domain  $\{s\in \C; \Re e s>1-\gamma\}$ and $\gamma \ge 1$. Then, for any  $\delta >0$ sufficiently small to have   $(a+\delta/3 , a+2\delta )\subset $ the function $U(t, s+\gamma )$ is analytic on 
$$Q_{\delta , V}=\{s\in S_I; \Re e s\in (a+\delta/3 , a+2\delta , |\Im m s|<2V\}$$
and it may then  have only a finite number of zeros in $Q_{\delta , V}$. Therefore there exists a closed sub-interval $J\subset (a+\delta /2, a+\delta )$ such that $U(t, s+\gamma )\not =0$ for $s\in S_J\cap Q_{\delta , V}$. It follows by continuity  that for some constant $C(t, J, V, \delta )>0$ that may depend on $t, J, V, \delta $, 
\begin{equation}
\label{prop3E5}
|U(t, s+\gamma )|\ge C(t, J, V, \delta )>0,\,\,\,\forall s\in S_J,\,|\Im s|<V. 
\end{equation} 
On the other hand, for $w\in J$, 
\begin{equation*}
\begin{aligned}
      |U(t,w+n\gamma+iv)|=
      \left| \dst\int_0^{\textcolor{black}{L}} u(t,x) x^{w+n\gamma-1}e^{iv\ln(x)}dx \right|
      &\leq 
      \textcolor{black}{L}^{(n-1)\gamma} \left| \dst\int_0^{\textcolor{black}{L}} u(t,x) x^{w+\gamma- 1}dx \right|
      \\
      &\leq  \textcolor{black}{L}^{(n-1)\gamma}  \textcolor{black}{L}^{a+\gamma+1} \|u(t,.)\|_{\infty}.
      \end{aligned}
\end{equation*}
Then,
\begin{equation}
\label{leqV}
       \left|\dfrac{ U(t,w +n\gamma+ iv)}{ U(t,w +\gamma+ iv)}\right|\leq  \dfrac{2}{n C(t, J, V, \delta )}\textcolor{black}{L}^{a+\gamma+1} \|u(t,.)\|_{\infty} \textcolor{black}{L}^{(n-1)\gamma}, \qquad w\in J,\; v\in [-V,V].
\end{equation}
Therefore, by \eqref{geqV} and \eqref{leqV}, for every $t\in (0, T)$, $\tau _0 >0$ and $\delta >0$ small enough, there exists a closed  interval $J\subset I$ and a constant $C>0$ depending on $t, J, \tau _0, \delta $ such that
\begin{align}
\label{prop3E1}
\left|R(s, t, \Delta \tau)\right|\le \frac{C}{1+|s|^2}\,\,\forall s\in S_J,\,\,\forall \Delta \tau \in (0, \tau _0).
\end{align}
The inverse Mellin transform of $R(s, t, \Delta \tau )$ is then a well-defined function for all $x>0$ and $\Delta \tau \in (0, \tau _0)$, given by 
\begin{align*}
&\mathcal M^{-1}( R(t, \Delta \tau ))(x)=\frac {1} {2i\pi }\int _{ r-i\infty }^{r+i\infty}  R(s, t, \Delta \tau )x^{-s}ds,\,\,r\in J.
\end{align*}
and is such that
\begin{align*}
\mathcal M^{-1}( R(t, \Delta \tau ))\in E' _{ J }
\end{align*}
where $ E' _{ J }$ is the space of distributions whose Mellin transform is analytic on $S_J$ (cf. \cite{ML}). Since it also holds for $x>0$,
\begin{align*}
\kappa (x)\equiv\mathcal M^{-1}(K)(x)= \frac {1} {2i\pi }\int _{ r-i\infty }^{r+i\infty}  K(x) x^{-s}ds,\,\,r\in J.
\end{align*}
it follows from (\ref{reco2}) that the inverse Mellin transform of $K^{est}(t, \Delta \tau )$ is also well defined for $\Delta \tau \in (0, \tau _0)$ and $x>0$, and given by a similar integral expression.
It is then possible to apply the inverse Mellin transform to both sides of (\ref{reco2}) to obtain for $t>0$, $\Delta \tau \in (0, \tau _0)$, $x>0$ and some constant $C>0$ depending on $t$, $\tau _0$ and $\delta $,
\begin{align*}
\left|\kappa (x) -\mathcal M^{-1}(K^{est}(t, \Delta \tau)(x)\right|&=\left|\mathcal M^{-1}( R(t, \Delta \tau ))(x)\right|\\
&\le C\Delta \tau\, x^{-r}\int _{\R} (1+|v|^2)^{-1}dv,\,\,\forall x\in (0, 1),\,\,\forall r\in J.
\end{align*}
Since $r\in J\subset  (a+\delta/2, a+\delta )$, 
\begin{align*}
\left|\kappa  (x)-\mathcal M^{-1}\left( K^{est}(t, \Delta \tau) \right)(x)\right|\le C_\delta  x^{-a-\delta } \Delta \tau\,\,\,\,\forall x\in (0, 1),\,\,\forall \Delta \tau\in (0, \tau _0). \end{align*}
For every $s\in (a, b)$ there exists $\delta >0$ such that $s>a+\delta $ and then, for all $x\in [0, 1]$, $\Delta \tau\in (0, \tau _0)$,
\begin{align*}
x^s\left|\kappa  (x)-\mathcal M^{-1}\left( K^{est}(t, \Delta \tau) \right)(x)\right|\le C_\delta  x^{-a-\delta+s } \Delta \tau\le 
C_\delta  \Delta \tau,
\end{align*}
and estimate (\ref{prop3E0}) follows.
\end{proof}
A different reconstruction formula  of $\kappa $ was already obtained in Theorem 2, (iii) of~\cite{DET18}.
{\color{vert}
We notice that there are similarities in the formulae: both of them are the inverse Mellin transform of a ratio between two Mellin transforms of linear functionals of the solution,  the numerator taken in $s$ and the denominator taken in $s+\gamma.$ This reveals a serious drawback when noise is considered: both formulae then fall in the scope of so-called {\it severely ill-posed} inverse problems, exactly as for deconvolution problems, see e.g.~\cite{Baumeister_book}, ch. 4 for an introduction. However, despite the fact that this new formula is an approximation whereas the previous one was exact, its advantages are many.
\begin{itemize}
\item Experimentally, it is possible to use the measurement of the solutions at several pairs of close time points, thus making the most of experimental data, see~\cite{BTMPSTDX19}. On the contrary, with the formula in~\cite{DET18}, only the large-time asymptotic profile can be used.
\item In~\cite{DET18} there are specific difficulties linked to the measurement of the asymptotic profile: first, as time passes, the distribution is closer and closer to zero-size particles, making the measurement all the more noisy ; second, one needs to assess the validity of considering that the asymptotic behaviour is reached - the distance to the true asymptotics being a second source of noise ; third, it has been proved by numerical simulation that different fragmentation kernels may give rise to very close asymptotic size-distribution of particles~\cite{DETX20}.
\item Experimentally, it should be possible to depart from several very different initial conditions, and then use the superimposition principle to combine them in such a way that we get the most information. This is a direction for future research.
\end{itemize}
}}
\begin{remark}
If only the hypothesis of Theorem \ref{thm:reco} are assumed, the same argument as above still shows (\ref{prop3E5}) for some interval $J\subset I$ and some constant $C$ depending on $t, J$ and $V$. But instead of (\ref{prop3E1}) only the following holds for some constant $C=C(t, J, \tau _0)$,
\begin{align}
\label{prop3E7}
\left|R(s, t, \Delta \tau)\right|\le \frac{C}{1+|s|}\,\,\forall s\in S_J,\,\,\forall \Delta \tau \in (0, \tau _0).
\end{align}
The inverse Mellin transform of $R(t, \Delta \tau )$ is then still well defined, but its expression is now
\begin{align*}
&\mathcal M^{-1}( R(t, \Delta \tau ))=-\left(x \frac{\partial}{\partial x}\right)(h(t, \Delta \tau ))\\
&\forall x>0:\,\,h(t, \Delta \tau, x)=  \frac {1} {2i\pi }\int _{ r-i\infty }^{r+i\infty}  R(s, t, \Delta \tau )s^{-1}x^{-s}ds,\,\,r\in J,
\end{align*}
where $h(t, \Delta \tau , x)$ is a function, defined for all $x>0$ and all $\Delta \tau\in (0, \tau _0)$,  such that $h(t, \Delta \tau _0)\in E'_J$. As above, the inverse Mellin transform of $K^{est}(t, \Delta \tau )$ is then well defined too, but no point wise estimate like (\ref{prop3E0}) holds.
\end{remark}

{\color{vert} 
\begin{remark}
At first sight, regularity assumptions such as~\eqref{hyp3}, \eqref{hyp4} and the non-erosion of Corollary~\ref{Col3} may be surprising. It is however classical in the field of inverse problems to assume regularity on the object we want to estimate, and to gain a better convergence rate when the regularity increases, see for instance~\cite{Engl}. Here however it is not only on $\kappa$ that regularity is required - moreover~\eqref{hyp3} is satisfied for a large class of measures and for all the classical fragmentation kernels - but also on the initial condition, which is less expected. We thus have been able to reconstruct the fragmentation kernel in two extreme cases: either very singular initial condition, given by a Dirac delta function, or very regular ones, at least ${\cal C}^3$.
\end{remark}}

\textcolor{black}{Point (ii) of  Theorem \ref{thm:reco} may also be used to estimate the  statistical parameters of the kernel $\kappa $ like mean, variance, skewness, kurtosis, since all of them may be expressed in terms of $K(s)$ for integer values of $s$. Consider for example the variance given by
\begin{equation*}
Var\left[\dfrac{\kappa}{2}\right] =\dfrac{1}{2} \dst\int_{(0,1)} \left| x-\dfrac{1}{2}\right|^2 \kappa(x)dx
=\dfrac{1}{2}K(3)- \dfrac{1}{2}K(2)+ \dfrac{1}{8}K(1)=\dfrac{1}{2}K(3)- \dfrac{1}{4}
\end{equation*} 
It is then possible to estimate $Var[\kappa ]$ using \ref{U_power_serie_inter} and defining:
\begin{equation*}
\left(Var\left[\dfrac{\kappa}{2}\right] \right)^{est}(t, \Delta \tau )=\dfrac{1}{2}K^{est}(3, t, \Delta \tau )- \dfrac{1}{4}.
\end{equation*}
The following Corollary immediately follows from Point (ii) of  Theorem \ref{thm:reco} for $s=3$.
\begin{corollary}[Estimate of the variance of the kernel]
\label{cor:variance}
Suppose that the assumptions of Theorem \ref{thm:reco} are satisfied. Then for all $T>0$ and $\tau _0>0$ there is $C\MT{(L,T, \tau _0,\alpha, \gamma)}$ such that
\begin{equation*}
 \left| Var[\kappa]-(Var[\kappa])^{est}(t, \Delta \tau )\right| \leq 
C\MT{(L,T,\alpha, \gamma)}\alpha \Delta \tau ,\,\,\forall t\in (0, T),\,\forall \Delta \tau \in (0, \tau _0).
\end{equation*}
\end{corollary}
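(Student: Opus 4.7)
The plan is to reduce the estimate on the variance directly to Point (ii) of Theorem \ref{thm:reco} evaluated at the specific value $s=3$. First I would observe that from the moment identification
\[
Var\left[\dfrac{\kappa}{2}\right] = \dfrac{1}{2}K(3) - \dfrac{1}{4},
\qquad
\left(Var\left[\dfrac{\kappa}{2}\right]\right)^{est}(t,\Delta\tau) = \dfrac{1}{2}K^{est}(3,t,\Delta\tau) - \dfrac{1}{4},
\]
the additive constant $1/4$ cancels in the difference and one gets
\[
Var\left[\dfrac{\kappa}{2}\right] - \left(Var\left[\dfrac{\kappa}{2}\right]\right)^{est}(t,\Delta\tau) = \dfrac{1}{2}\bigl(K(3) - K^{est}(3,t,\Delta\tau)\bigr) = \dfrac{1}{2}R(3,t,\Delta\tau),
\]
using the definition \eqref{reco2} of the residual $R$.

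Next, I would check that the value $s=3$ lies in the regime where part (ii) of Theorem \ref{thm:reco} applies: indeed $s=3$ is a real number with $s>1$, so the hypotheses of that statement are met without any further work. Applying it gives, for every $T>0$ and every $\tau_0>0$, a constant $C=C(t,3,\tau_0)>0$ (which we may take uniform in $t\in(0,T)$ by a routine continuity argument, using the fact that $U(t,s+\gamma)$ is positive and continuous in $t$ on $(0,T]$, hence bounded below away from zero on compact subintervals) such that
\[
|R(3,t,\Delta\tau)| \le C\,\alpha\,\Delta\tau, \qquad \forall t\in(0,T),\ \forall \Delta\tau\in(0,\tau_0).
\]
Combining this with the previous identity yields the stated bound (up to the harmless factor $1/2$ absorbed in the constant).

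There is essentially no obstacle in this argument since it is, as the authors indicate, an immediate specialisation of Theorem \ref{thm:reco}(ii) to the real point $s=3$. The only mild subtlety worth flagging is making explicit that the constant in Theorem \ref{thm:reco}(ii) can be chosen uniformly on the time window $(0,T)$; this is clear from the proof of that theorem, where the only $t$-dependence enters through a positive lower bound on $|U(t,s+\gamma)|$, which is locally uniform on $(0,T]$ by continuity and positivity of $U(\cdot,s+\gamma)$ guaranteed by Theorem \ref{thm:power_serie} and Theorem \ref{thm:positivity}.
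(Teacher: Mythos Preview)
Your proof is correct and follows exactly the approach indicated in the paper, which simply states that the corollary follows immediately from Point (ii) of Theorem~\ref{thm:reco} applied at $s=3$. Your additional remark about the uniformity of the constant in $t\in(0,T)$ is well-placed, since the statement of Theorem~\ref{thm:reco}(ii) writes $C=C(t,s,\tau_0)$ while quantifying over all $t\in(0,T)$; as you note, inspection of the proof shows the dependence is really on $T$ via the uniform positive lower bound on $U(t,s+\gamma)$ and the TV bound on $\mu(t)$.
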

}

\subsection{Proof of Lemma \ref{lemma:prop}.}
\begin{proof}[Proof of Lemma \ref{lemma:prop}]
 The arguments
rely on the formula obtained in Theorem \ref{thm:power_serie}.
\begin{enumerate}
 \item 
 We use the formula \eqref{representation_solution}
 obtained in Theorem \ref{thm:power_serie}.
 Note that it can be rewritten using the change of variables
\begin{equation}
\label{CV}
 z=\dfrac{x}{\ell}, \quad dz= -\dfrac{z^2}{x}d\ell,
\end{equation}

as 
\begin{equation}
\label{name}
\begin{aligned}
  u(t,x) &= e^{-\a x^{\gamma}t} u_0(x)
  +  \sum_{n=0}^{\infty} (\a t)^n
  \dst\int_0^{1} \dfrac{x^{n\gamma}}{z^{n\gamma}} a_n(z) u_0\left(\dfrac{x}{z}\right) \dfrac{dz}{z}.
\end{aligned}
\end{equation}

The first term of the sum is clearly $C^1$, since $u_0$ is.
To deal with the second term, set 
\begin{equation*}
 I_n(x)= \dst\int_0^{1} \dfrac{x^{n\gamma}}{z^{n\gamma}} a_n(z) u_0\left(\dfrac{x}{z}\right) \dfrac{dz}{z}.
\end{equation*}

The fisrt step is to prove by induction
that for all $x_0>0$, for all $n\geq 0$, 
 $z\to a_n(z) \in \mathcal{C}^1[x_0,1]$.
 The function $a_0$ is clearly $ \mathcal{C}^1$, since it is identically zero.
 Let us assume that for some $n\geq 0$, 
 $z\to a_n(z) \in \mathcal{C}^1[x_0,1]$.
The function $a_{n+1}$ satisfies \eqref{induction} and is composed with three terms. 
The first term and third term are clearly $\mathcal{C}^1$ since 
$a_n(z) \in \mathcal{C}^1[x_0,1]$.
We focus on the second term
 \begin{equation*}
  J_n(x)= \dst\int_x^{\infty} y^{\gamma-1} \kappa \left(\dfrac{x}{y} \right)a_n(y)dy.
  \end{equation*}
Once again,  it can be rewritten using the change of variables
\eqref{CV}
 \begin{equation*}
  J_n(x)=\dst\int_{x_0}^{1} \dfrac{x^{\gamma}}{z^{\gamma}} \kappa(z) a_n\left(\dfrac{x}{z}\right) \dfrac{dz}{z}.
 \end{equation*}
 
 The dominated convergence theorem guarantees that 
 $J_n\in \mathcal{C}^1[x_0,1]$ and 
 that 
 \begin{equation*}
  J_n'(x)= \dst\int_{x_0}^{1} \dfrac{x^{\gamma}}{z^{\gamma}} \kappa(z) a_n'\left(\dfrac{x}{z}\right) \dfrac{dz}{z^2}
  + \dst\int_{x_0}^{1} \gamma\dfrac{x^{\gamma-1}}{z^{\gamma}} \kappa(z) a_n\left(\dfrac{x}{z}\right) \dfrac{dz}{z}.
 \end{equation*}
Indeed
\begin{equation*}
 \left|\dfrac{x^{\gamma}}{z^{\gamma}} \kappa(z) a_n'\left(\dfrac{x}{z}\right) \dfrac{1}{z^2}\right|
\leq \dfrac{1}{x_0^{2+\gamma}}\|a_n'\|_{\mathcal{C}^0[x_0,1]}, \quad 
 \left|\gamma\dfrac{x^{\gamma-1}}{z^{\gamma}} \kappa(z) a_n\left(\dfrac{x}{z}\right) \dfrac{1}{z}\right|
\leq \dfrac{\gamma \max\{x_0^{\gamma-1}, 1\}}{x_0^{1+\gamma}}\|a_n\|_{\mathcal{C}^0[x_0,1]}.
\end{equation*}
We have proven that $a_n\in\mathcal{C}^1(0,1)$ since it is 
$\mathcal{C}^1(K)$ for all $K$ compact of $(0,1)$.


The fisrt step is to prove
 that $I_n \in \mathcal{C}^1([0,\textcolor{black}{L}])$.
 To do so, we use the dominated convergence to prove that 
for all $x_0>0$, we have $I_n  \in \mathcal{C}^1([x_0,\textcolor{black}{L}])$ and 
that
\begin{equation}
\label{I_n'}
\begin{aligned}
I_n'(x)
=
n \gamma x^{n\gamma-1}   \dst\int_0^{1}  \dfrac{1}{z^{n\gamma}}a_n(z) u_0\left(\dfrac{x}{z}\right) \dfrac{dz}{z}
+\dst\int_0^{1} 
 \dfrac{x^{n\gamma}}{z^{n\gamma}}
  a_n(z) u_0'\left(\dfrac{x}{z}\right)
  \dfrac{dz}{z^2}, \quad x\in [x_0,\textcolor{black}{L}].
\end{aligned}
\end{equation}
Indeed, the conclusion of the dominated convergence holds: 
$u_0 \in \mathcal{C}^1([0,\textcolor{black}{L}])$, hence the the integrand is in $\mathcal{C}^1([0,\textcolor{black}{L}])$ as well. 
The domination is as follows: since $\supp(u_0)\subset [0,\textcolor{black}{L}]$, the bounds of the integral $I_n$ are $z\in \left[\frac{x}{\textcolor{black}{L}},1\right] \subset \left[\frac{x_0}{\textcolor{black}{L}},1\right]$, and thus
\begin{equation*}
\begin{aligned}
 \left| \dfrac{x^{n\gamma-1} }{z^{n\gamma}}a_n(z) u_0\left(\dfrac{x}{z}\right) 
 \dfrac{1}{z}\right|
&\leq \dfrac{\Mag{\max\{\left(\frac{x_0}{\textcolor{black}{L}}\right)^{n\gamma-1}, \textcolor{black}{L}^{n\gamma-1}\}}}{x_0^{2} }
\|u_0\|_{\infty}a_n(z), 
\\  
\left|\dfrac{x^{n\gamma}}{z^{n\gamma}}
  a_n(z) u_0'\left(\dfrac{x}{z}\right) \dfrac{dz}{z^2}\right|
&\leq \dfrac{\Mag{\max\{\left(\frac{x_0}{\textcolor{black}{L}}\right)^{n\gamma}, \textcolor{black}{L}^{n\gamma}\}}}{x_0^{2} }
\|u_0'\|_{\infty} a_n(z),
\end{aligned}
\end{equation*}
and it was proved in \eqref{u_n} that 
$\|a_n\|_{TV} \leq \dfrac{(N+2)^n}{n!}$.

Now we claim that the function $S$ defined as
\begin{equation}
 S(x)= \sum_{n=1}^{\infty} (\a t)^n I_n(x)
\end{equation}
is of class $\mathcal{C}^1([x_0,\textcolor{black}{L}])$ for all $x_0>0$.
Indeed, we just saw that $I_n\in \mathcal{C}^1([0,\textcolor{black}{L}])$, and 
that $I_n'$ is given by \eqref{I_n'}.
For $x\in [x_0,\textcolor{black}{L}]$, we can control 
each of the two terms of the sum \eqref{I_n'} by 
two sequences that converge. Indeed
using again \eqref{u_n}, we have 
\begin{equation*}
\begin{aligned}
 n  \dst\int_0^{1}  \left| \dfrac{x^{n\gamma-1} }{z^{n\gamma}}a_n(z) u_0\left(\dfrac{x}{z}\right) 
 \dfrac{1}{z}\right|dz
&\leq \dfrac{\Mag{\max\{\left(\frac{x_0}{\textcolor{black}{L}}\right)^{n\gamma-1}, \textcolor{black}{L}^{n\gamma-1}\}}}{x_0^{2} }
\|u_0\|_{\infty} n \dfrac{(N+2)^n}{n!}, 
\\  
  \dst\int_0^{1}  \left|\dfrac{x^{n\gamma}}{z^{n\gamma}}
  a_n(z) u_0'\left(\dfrac{x}{z}\right) \dfrac{dz}{z^2}\right| dz
&\leq \dfrac{\Mag{\max\{\left(\frac{x_0}{\textcolor{black}{L}}\right)^{n\gamma}, \textcolor{black}{L}^{n\gamma}\}}}{x_0^{2} }
\|u_0'\|_{\infty} \dfrac{(N+2)^n}{n!},
\end{aligned}
\end{equation*}

and 
\begin{equation*}
 \sum_{n=1}^{\infty} (\a t)^n \left(\dfrac{ \textcolor{black}{L}^{\gamma}}{x_0^{\gamma}} \right)^n \dfrac{(N+2)^n}{n!} <\infty, \qquad  \sum_{n=1}^{\infty} (\a t)^n  \left(\max\{x_0^{\gamma}, \textcolor{black}{L}^{\gamma}\}\right)^n
 n \dfrac{(N+2)^n}{n!} <\infty,
\end{equation*}
This ends the proof of \ref{1}, 
and we have in addition for $x\in [x_0,\textcolor{black}{L}]$
an expression of the spatial derivative of $u$
\begin{equation}
 \label{u_x}
 \begin{aligned}
 \dfrac{\p}{\p x}u(t,x)= &
 e^{-\a x^{\gamma} t}u_0'(x)
 - \a \gamma x^{\gamma-1}t  e^{-\a x^{\gamma} t}u_0(x)
 \\
&+\sum_{n=1}^{\infty} (\a t)^n 
\left(n \gamma x^{n\gamma-1}   \dst\int_0^{1}  \dfrac{1}{z^{n\gamma}}a_n(z) u_0\left(\dfrac{x}{z}\right) \dfrac{dz}{z}
+\dst\int_0^{1} 
 \dfrac{x^{n\gamma}}{z^{n\gamma}}
  a_n(z) u_0'\left(\dfrac{x}{z}\right)
  \dfrac{dz}{z^2}
  \right).
  \end{aligned}
\end{equation}
\Mag{Similar arguments hold to guarantee that $x\to u(t,x) \in C^3([0,\textcolor{black}{L}])$.}
\item 
First, we claim that $\supp(u(t,.))\subset[0,\textcolor{black}{L}]$.
Indeed, this is a consequence of formula \eqref{name} and of the fact that $\supp(a_n) \subset [0,1]$ for $n\geq 0$.
Let us now prove that 
$\supp(u(t,.))=[0,\textcolor{black}{L}]$.
Take $y\in [0,\textcolor{black}{L}]$ and set $Y(t)= u(t,y)$.
The fragmentation equation \eqref{eq:frag}
implies 
\begin{equation*}
 Y'(t) \geq -\a y^{\gamma} Y(t), 
\end{equation*}
{\it i.e.}
\begin{equation*}
 u(t,y)= Y(t) \geq e^{-\a y^{\gamma}t} Y(0)=e^{-\a y^{\gamma}t} u_0(y).
\end{equation*}
If $u_0(y)\neq 0$, then  for all $t\geq 0$, $u(t,y)\neq 0$.
If $u_0(y)= 0$, since $y\in \supp(u_0)$, for all $\eps>0$, there exists 
$y_{\eps}$ such that $|y-y_{\eps}|<\eps$
and $u_0(y_{\eps})\neq 0$ and then  $u(t,y_{\eps})\neq 0$, 
which implies that $y\in \supp(u(t,.))$.
Thus $\supp(u(t,.))=[0,\textcolor{black}{L}]$.
\item
 It is clear from formula \eqref{representation_solution} that 
$u(t,\textcolor{black}{L})= e^{-\a \textcolor{black}{L}^{\gamma}t} u_0(\textcolor{black}{L})$.
Then, if  $u_0(\textcolor{black}{L})>0$, we have
$u(t,\textcolor{black}{L})>0$.
If $u_0(\textcolor{black}{L})=0$ and $u'_0(\textcolor{black}{L})<0$, we have 
$u(t,\textcolor{black}{L})= 0$, and 
formula \eqref{u_x} implies 
\begin{equation*}
 \dfrac{\p}{\p x}u(t,\textcolor{black}{L})= e^{-\a \textcolor{black}{L}^{\gamma}t}u'_0(\textcolor{black}{L}) <0.
\end{equation*}

\end{enumerate}
\end{proof}

 \section{Numerical simulations}

{\color{vert}In this section, we illustrate the different theoretical results and investigate the convergence errors of the reconstruction formulae.}
\MT{
\begin{itemize}
\item {\bf Illustration of Theorem \ref{thm:short}:} 
 We show on Figure \ref{fig:kernel} the profile of the estimated kernel  $\kappa^{est}(t)$ defined in formula \eqref{k_est_F}, 
 for $\gamma=\alpha=1$, for four different kernels $\kappa$ and for different times $t$.
 For each plot, the kernel $\kappa$ is displayed in an inset on the upper right. The initial condition $\mu_0$ is a highly peaked gaussian centered at $x=1$ and the numerical solution $\mu_t$ used to build $\kappa^{est}(t)$
is obtained using a numerical scheme with a time step $\Delta t=0.01$.
We observe that the estimate $\kappa^{est}(t)$ is valid for early time points: indeed, at the naked eye, $\kappa^{est}(t)$ and $\kappa$ look alike.
As time goes by, the size distribution is driven towards the stationary state and the information on the kernel is lost. \\
On Figure~\ref{fig:Th2:errorTV} Left, we 
\MT{illustrate the estimate \eqref{estkappaC} and show that the time evolution of the error $$e_{TV}(t)=\Big\|\kappa^{est}(t)-\kappa\Big\|_{TV}$$ increases linearly with time $t$ for the same four kernels $\kappa$ considered in Figure~\ref{fig:kernel} and for an initial condition $\mu_0$ very close to $\delta(x=1)$.
We observe that the slope of $t\to e_{TV}(t)$ is small for kernels of erosion type $(\kappa(0)\neq 0)$, and large for kernels producing daughter particles of similar sizes.} {\color{vert} This may be linked to a larger constant in~\eqref{estkappaC} for more peaked kernels; this provides us with an interesting direction for future work.}
\item \MT{\bf{Illustration of Corollary \ref{cor:generic}}}
In  Figure~\ref{fig:Th2:errorTV} Right, we draw the curves of the error $e_{TV}(t)$ for  three initial conditions $\mu_0$  given by (truncated) gaussians of standard deviation $\sigma=0.01,$ $\sigma=0.1$ and $\sigma=0.2$ and for the kernel $\kappa$ in black on the left figure.
As seen on the formula \eqref{cor:generic2}, the increase of $e_{TV}(t)$ is linear with respect to time $t$, but
 an extra constant error $\eps$ is added, related to the distance between $\delta_1$ and $\mu_0$. 
We notice that a small error term $\eps$ was already observed in Figure \ref{fig:Th2:errorTV}, Left, due to the distance between $\delta(x=1)$ and its numerical approximation on a discrete grid.
 For large standard deviations (e.g. $\sigma=0.2)$, the error $\eps$ becomes so large that the estimate in Total Variation norm is no more meaningful: we see the interest to turn to the Bounded Lipshitz norm.
 \\
 In Figure \ref{fig:cor2:time1}, we display the shape of the estimated kernel $\kappa^{est}(t)$ for a small value of $t$, for a kernel $\kappa$ of erosion type and with three initial conditions being gaussians with various spreading. It can be observed at the naked eye that the thiner the gaussian is, the better the approximation $\kappa^{est}(t)$ is as well. 
We observe how  the estimated kernel $\kappa^{est}$ is differently impacted around $x=0$ and around $x=1$:
 this  gives interesting hints on how the kernel symmetry could be used to improve the theoretical estimates.\\
 \item  {\bf Illustration of Theorem \ref{thm:stab}}:
 In Fig~\ref{fig:Marie}, we display the error
 $$e_{BL}(t)=\|\kappa^{est}(t)-\kappa\|_{BL}$$ as a function of time for a two-peaked gaussian kernel $\kappa$, for a gaussian initial condition $\mu_0$ of variance $\sigma^2$ and for a noise $\eps_0$ on the measurement of the initial data $\mu_0$ and a noise $\eps_1$ on the measurement of the solution $\mu_t$ used in the calculation of $\kappa^{est}(t)$. {\color{vert} The standard deviation $\sigma$ thus plays the role of $q$ in Theorem~\ref{thm:stab}. To simulate the noise on the solution observed, we add a multiplicative uniform noise on $[-0.5\varepsilon_i,+0.5\varepsilon_i]$ to the simulation.} 
 Numerically, we approximate the BL norm by the Wasserstein distance $W_1$, since 1/it is easier to compute using the monotone rearrangement theorem, 2/the BL norm is close to the Wasserstein distance between two measures of approximately same mass and whose supports are not too far.
  The error $e_{BL}(t)$ first decreases and then increases, as expected by Remark \ref{rem:t_optimal}. 
 In the inset, we superimposed the kernel $\kappa$ (in red) with the best estimated $\kappa$, namely $\kappa^{est}(t^*)$ (in blue) taken at the optimal time where the error reaches its minimum.
\end{itemize}
 }

  \MT{\begin{itemize}
 \item 
{\bf Illustration of Remark \ref{rem:t_optimal}}
 In Figure \ref{fig:Marie2}, we investigated how the minimal error and the optimal time, {\it i.e.} the time displaying the minimal error (drawn on Figs.~\ref{fig:Marie} as the red asterisk), evolve with respect to the noise level. To do so, we take an equal level of noise for the three noise sources $\eps_0,$ $\eps_1$ and $q$ (with $q=\sigma$ the standard deviation of the gaussian taken for the initial size distribution). We ran fifty simulations - to take into account the fact that the noise we simulate is random - and we draw the optimal time (blue asterisks in Fig.~\ref{fig:Marie2}) giving the optimal error (green asterisks in Fig.~\ref{fig:Marie2}). We  then compare  the mean curves over these fifty simulations, and compare it with the curve $x \mapsto \sqrt{x}$.  We observe a good qualitative agreement with the expected rate of convergence.
\end{itemize}
}

 \MT{\begin{itemize}
 \item {\bf Illustration of Corollary \ref{cor:variance}}
 We illustrate how we recover the variances of the 6 different typical fragmentation kernels described in  Table 1.
  We recall that the variance and standard deviation are given by
  \begin{equation*}
  \Mag{Var=} Var\left[\dfrac{\kappa}{2}\right] =\dfrac{1}{2}K(3)- \dfrac{1}{4}, \qquad \Mag{SD=\sqrt{Var}},
  \end{equation*}
  and we define the estimated variance of the kernel as,
    \begin{equation*}
    \begin{cases}
 Var^{est}(t,\Delta t)=\dfrac{1}{2}K^{est}(3,t,\Delta t)- \dfrac{1}{4}, \qquad \text{ if }\dfrac{1}{2}K^{est}(3,t,\Delta t)- \dfrac{1}{4}>0\\
 Var^{est}(t,\Delta t)=0, \qquad \qquad \qquad\hspace{2cm} \text{ else.}
 \end{cases}
  \end{equation*}
   where the formula for $K^{est}$ is given in Definition \ref{def:Kest}. Let us recall that the estimation of the variance $Var^{est}$ is not a priori the variance of the estimated kernel $\kappa^{est}$.
We also define $SD^{est}(t,\Delta t)=\sqrt{Var^{est}(t,\Delta t)}$.
 In Figure \ref{fig:Dt}, we assume $\gamma=\alpha=1$, we consider the six kernels $\kappa$ described in Table \ref{table} and the initial condition is a peaked gaussian centered at $x=2$. We plot the
 relative error on the standard deviation defined as 
 \begin{equation*}
 \text{Relative Error on the Standard Deviation}= \dfrac{|SD^{est}(0,\Delta t) - SD|}{{SD}}
 \end{equation*}
as a function of $\Delta t$.
We observe that for large values of $\Delta t$ the relative error is saturated and equal to $1$ for the kernels in blue, red and yellow, corresponding to kernels with small variances.
For these kernels, the estimated variance becomes negative from a certain value for $\Delta t$, so that $Var^{est}$ is then $SD^{est}$ are taken to be zero.
The worst estimation of the relative standard deviation we have is for the kernel in blue, {\it i.e.} for the kernel with a very small standard deviation (SD=$0.1001$): the estimation of the standard deviation $SD^{est}$ is zero, and then the relative error is equal to $1$.
For $\alpha \Delta t=0.1$, we are able to have a good idea 
of the ordering of standard deviations of the six kernels.
 \end{itemize}
 }

\begin{figure}
 \begin{tabular}{cc}
\includegraphics[width=0.47\textwidth]{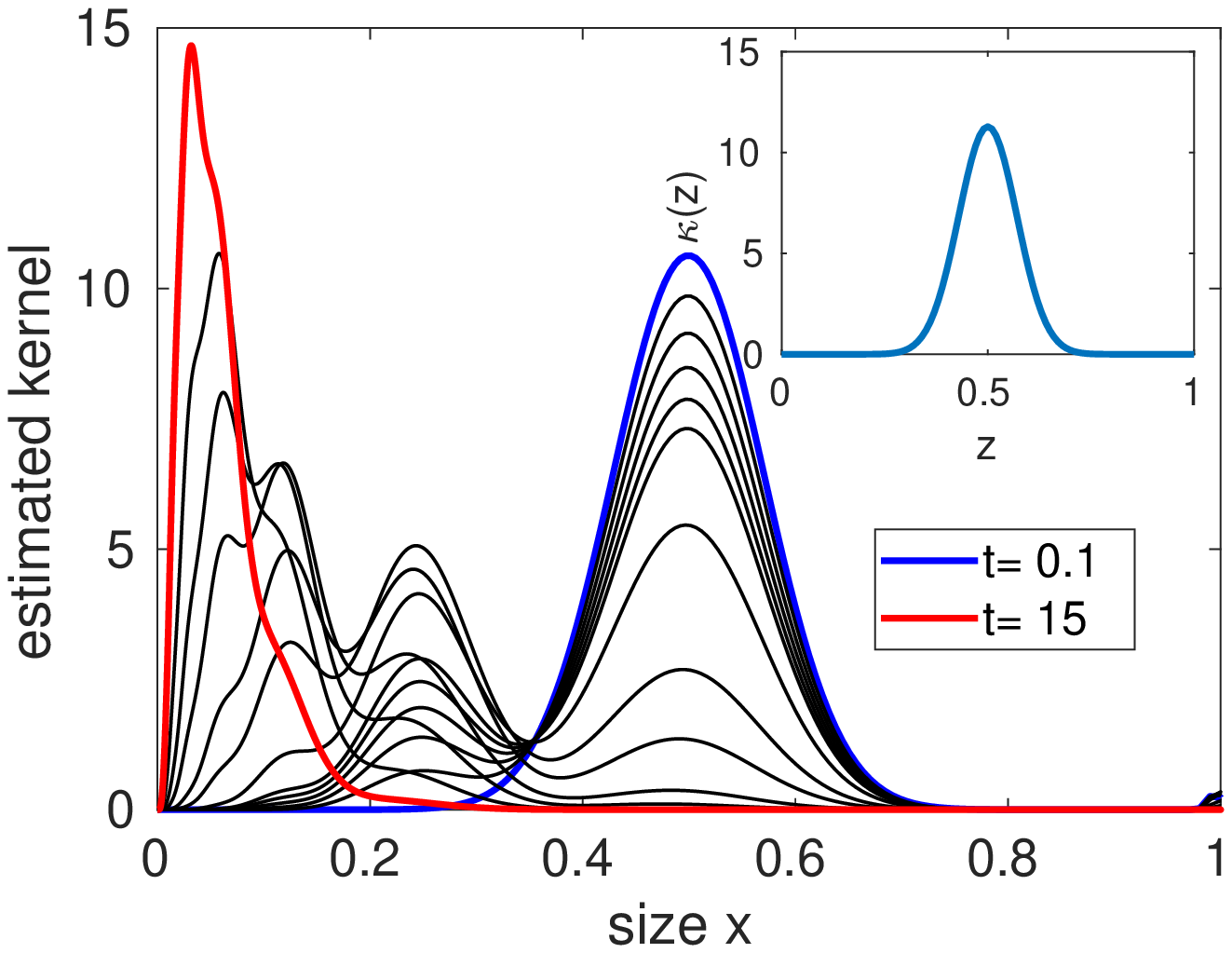} &  \includegraphics[width=0.47\textwidth]{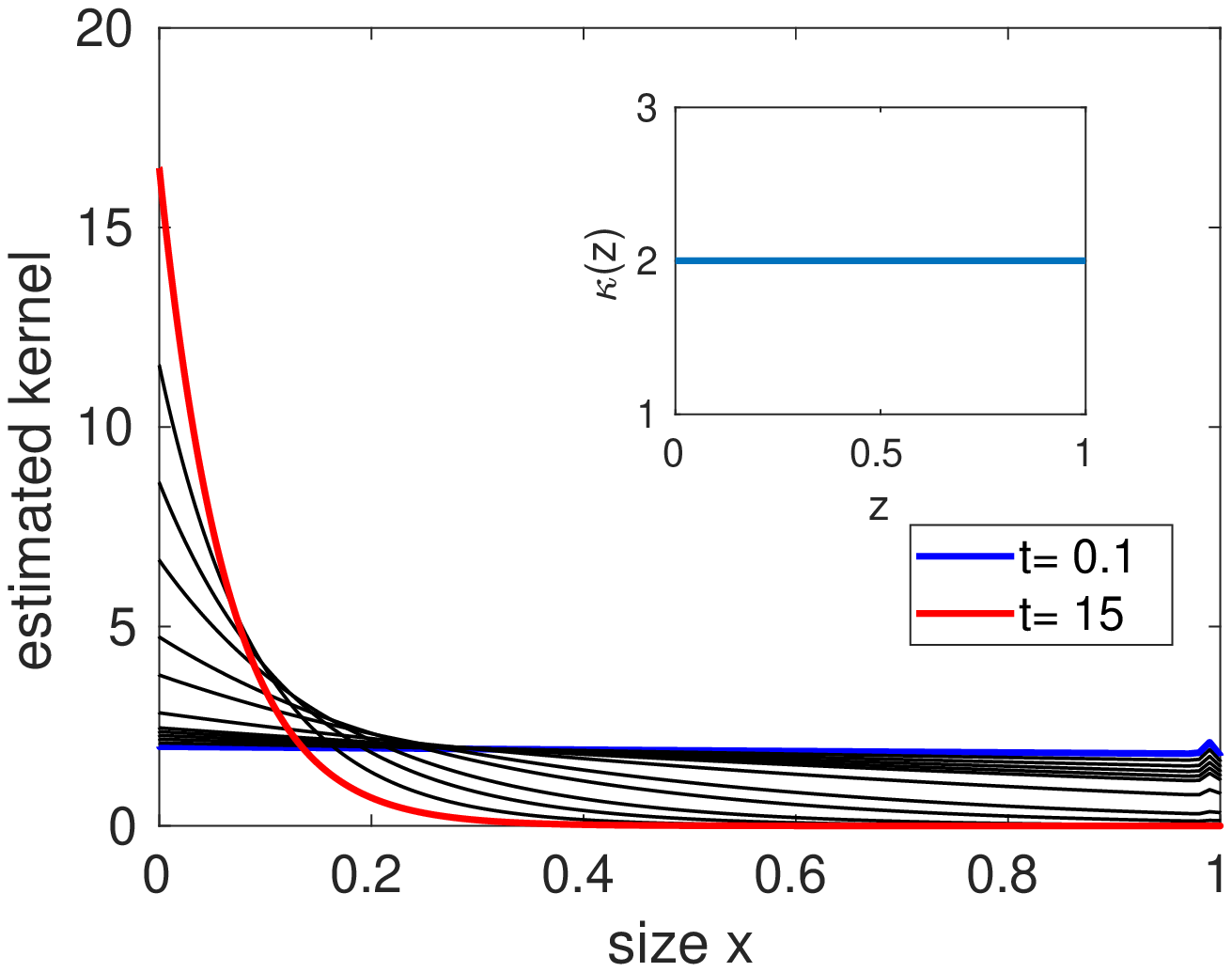} \\
\includegraphics[width=0.47\textwidth]{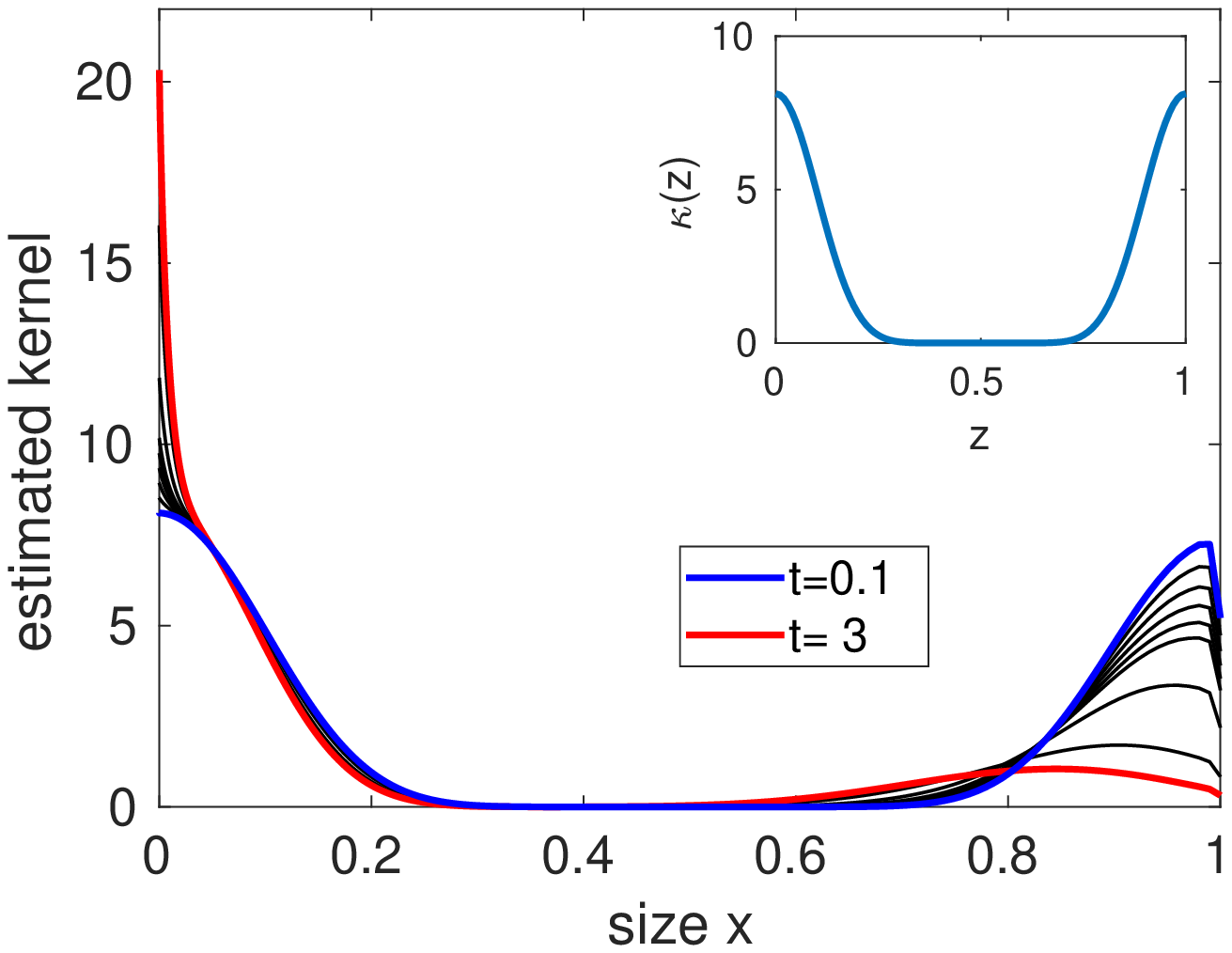} &  \includegraphics[width=0.47\textwidth]{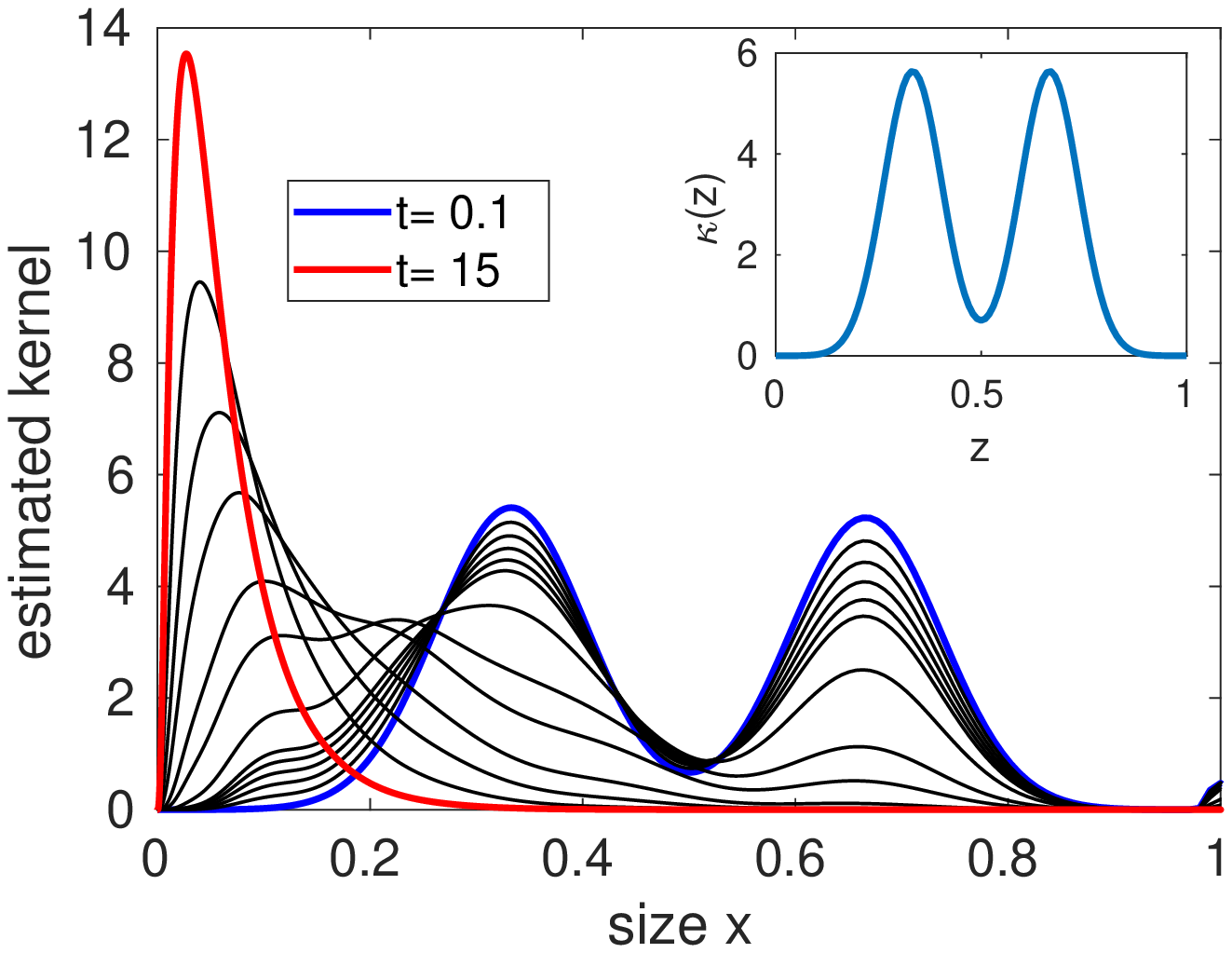}
 \end{tabular}
 \caption{\MT{{\bf Profile of the estimated kernel}  $\kappa^{est}(t)$ for $\gamma=\alpha=1$.
 Upper-right inset, light blue: the kernel $\kappa$.
Blue: $\kappa^{est}(t)$ for $t=0.1$. Red: $\kappa^{est}(t)$ for a large value of $t$.
Black: $\kappa^{est}(t)$ for intermediate times $t$.
 }}
   \label{fig:kernel}
 \end{figure}

   \begin{figure}
\includegraphics[width=0.5\textwidth]{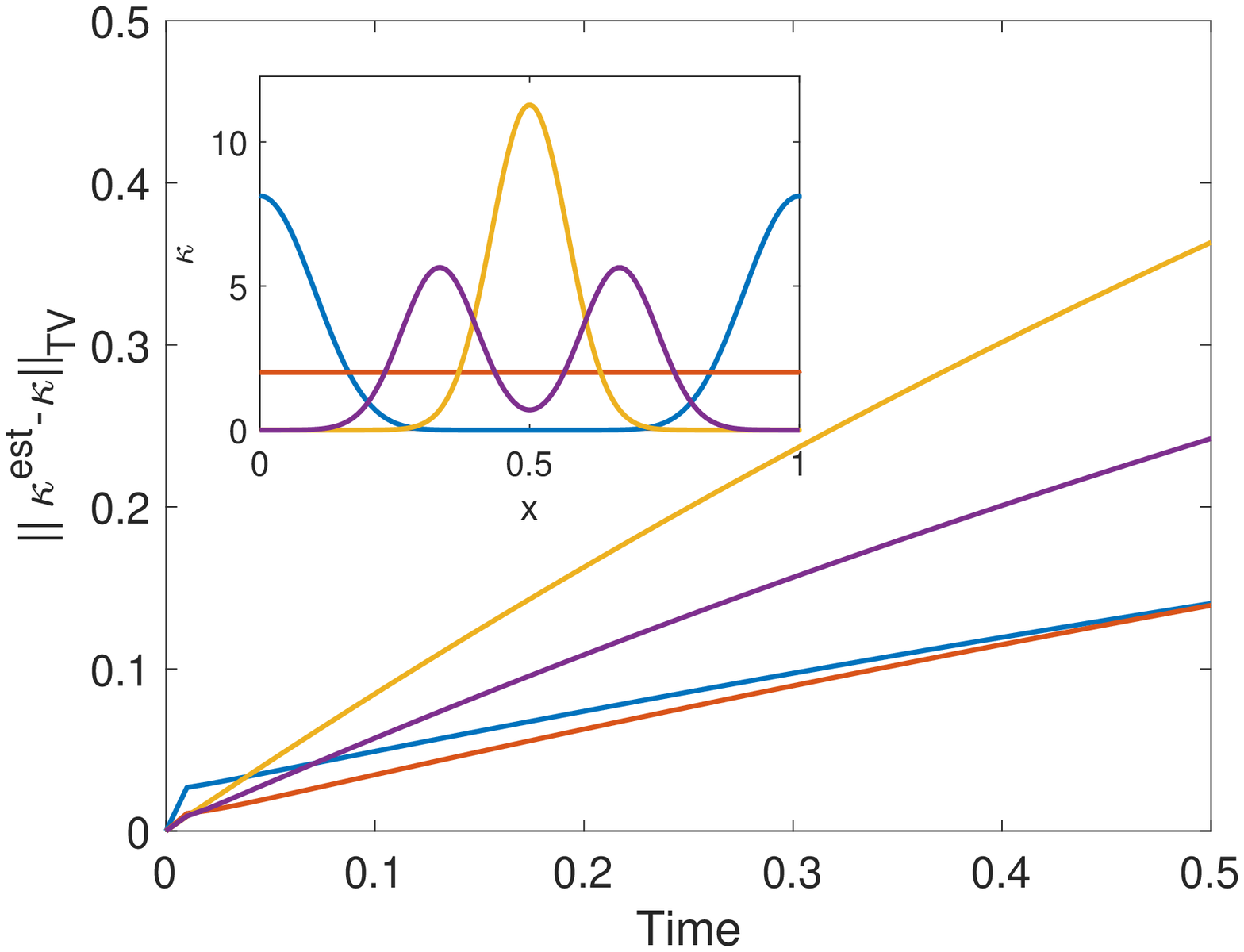}
   \includegraphics[width=0.5\textwidth]{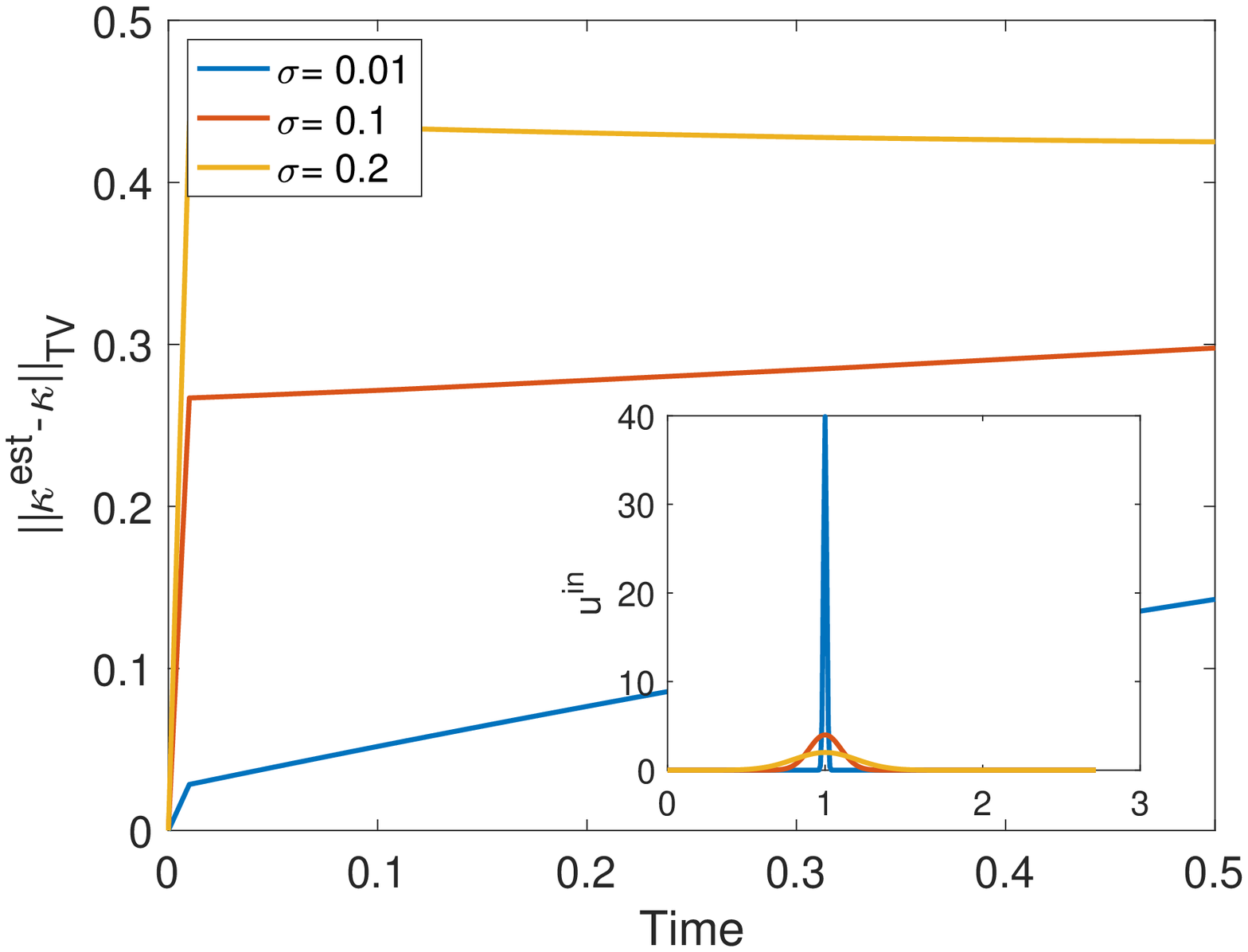}
 \caption{ \comMD{{\bf Time evolution of the distance in the Total Variation norm between the fragmentation kernel and its first order estimate} given by $\kappa^{est},$ 
 departing from $u_0=\delta_1$ (Left) or departing from a Gaussian curve centered at $x=1$ with a standard deviation $\sigma=0.01$, $\sigma=0.1$ and $\sigma=0.2$ respectively (Right). 
 Left:   the corresponding kernel is displayed on the inset with the same colour as the error curve. Right: the fragmentation kernel is the one in Fig~\ref{fig:kernel} bottom left (in blue on the inset of the left figure). The corresponding initial condition is displayed on the inset with the same colour as the error curve.}
   \label{fig:Th2:errorTV}}
 \end{figure}
 
 
    \begin{figure}
\includegraphics[width=0.5\textwidth]{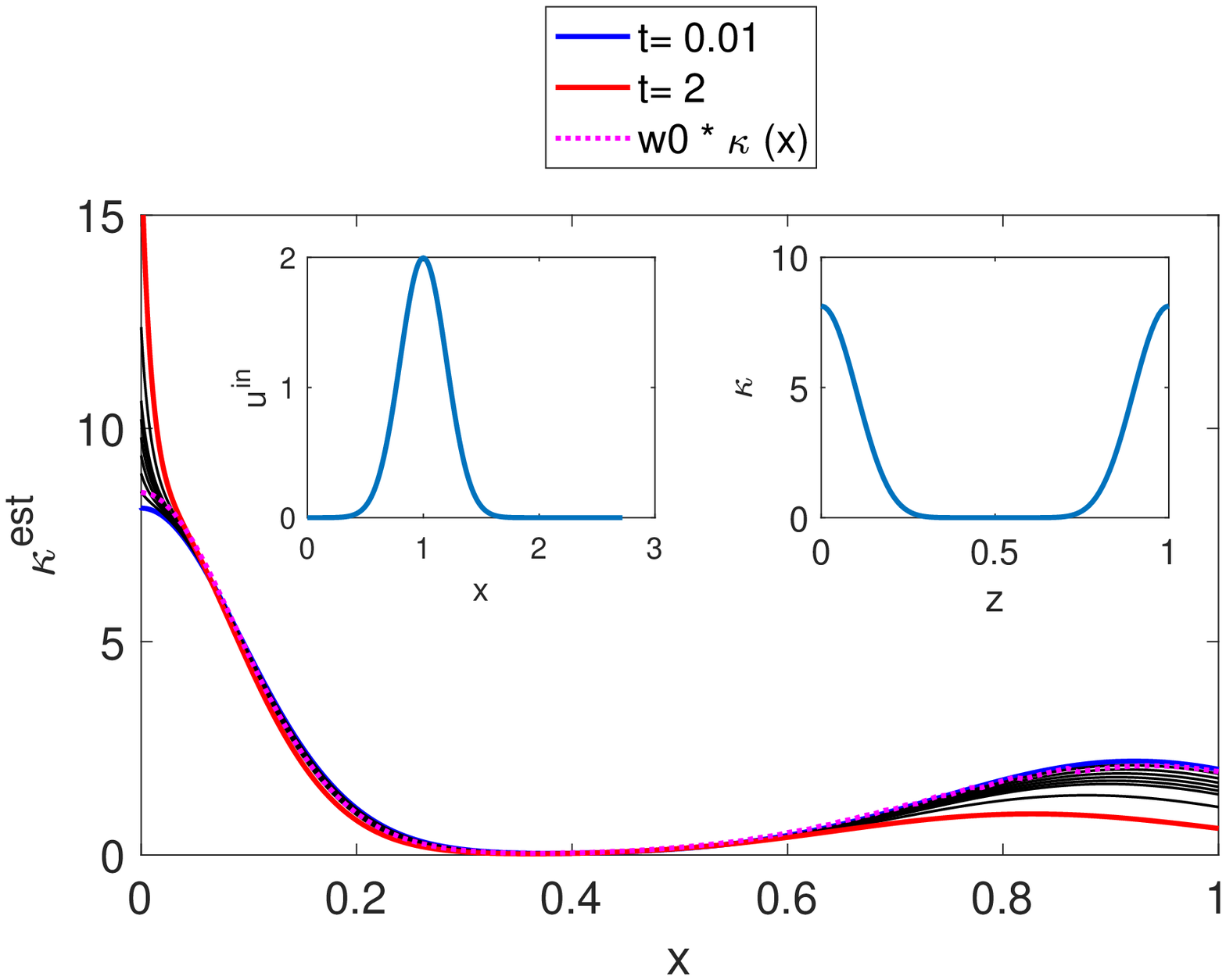}
   \includegraphics[width=0.5\textwidth]{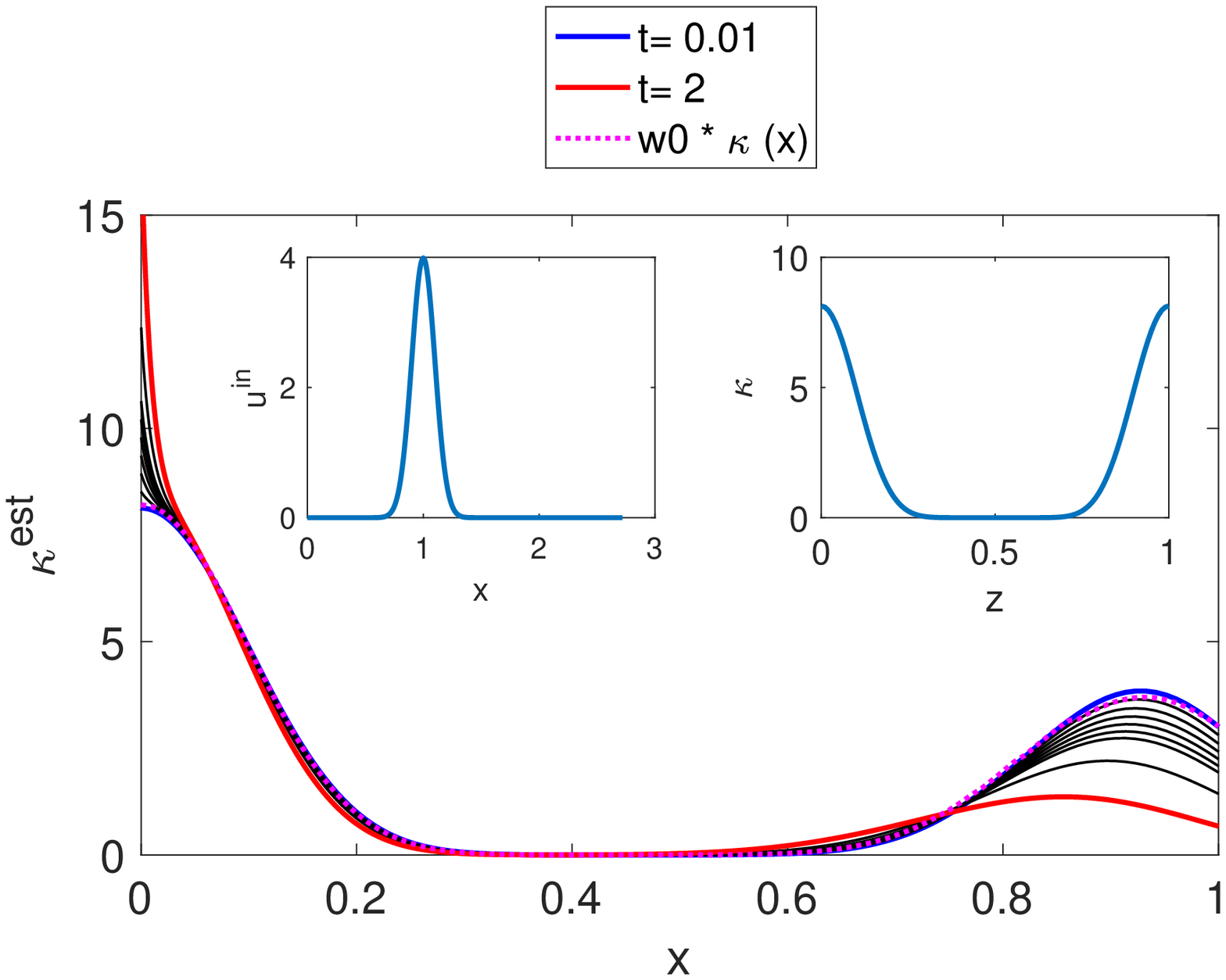}\\
\includegraphics[width=0.5\textwidth]{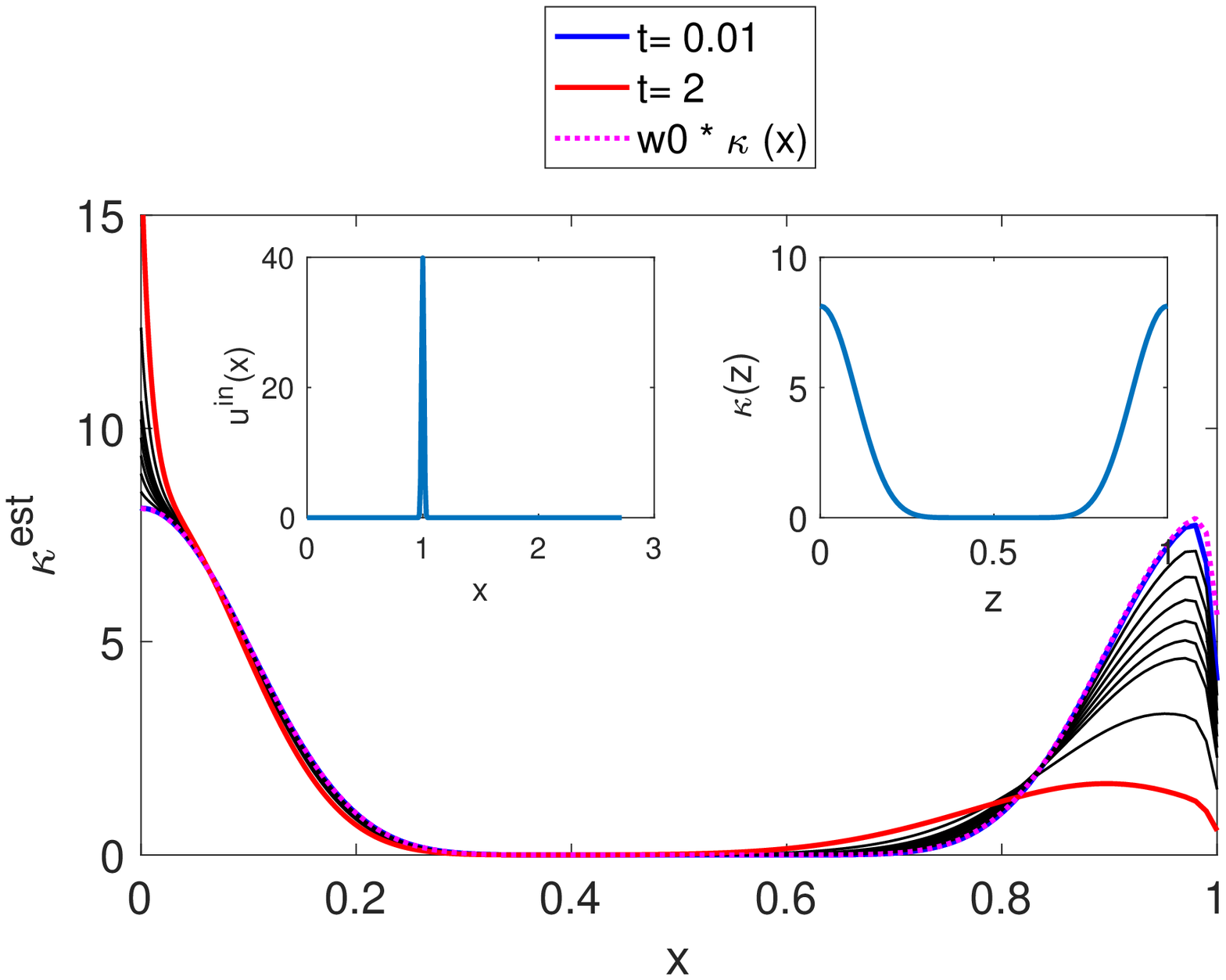}
   \includegraphics[width=0.5\textwidth]{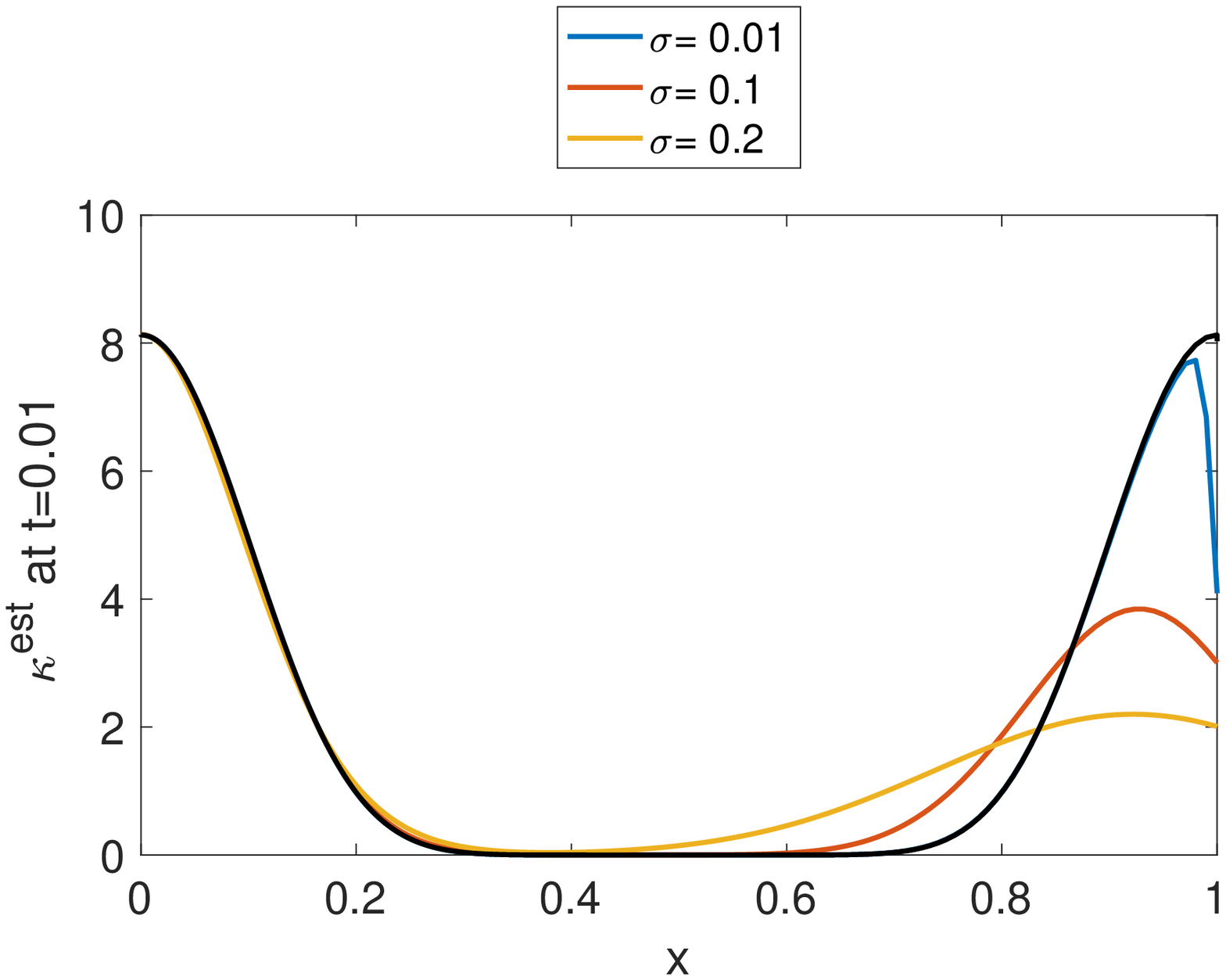}
    \caption{
{\bf Estimation of the fragmentation kernel} \MT{$\kappa^{est}(t)$ at various times $t$ (black curves), first time point in blue, latest time point in red. 
Upper left: for an initial data $\mu_0$ with variance 
$\sigma=0.2$, Upper Right: with variance $\sigma=0.1$, Bottom Left: with variance $\sigma=0.01$. In dotted pink is what is truly 
estimated, namely the convolution $w_0 \ast \kappa$ (see notations of Corollary \ref{cor:generic}). Bottom Right: Superimposition of the three estimates $\kappa^{est} (t)$ at an early time point $t=0.01$ $t=0.01$ corresponding to the three initial conditions of respective variance $\sigma =0.01$, $\sigma=0.1$ and $\sigma=0.2$.
}
}
\label{fig:cor2:time1}
 \end{figure}

   \begin{figure}
\includegraphics[width=0.5\textwidth]{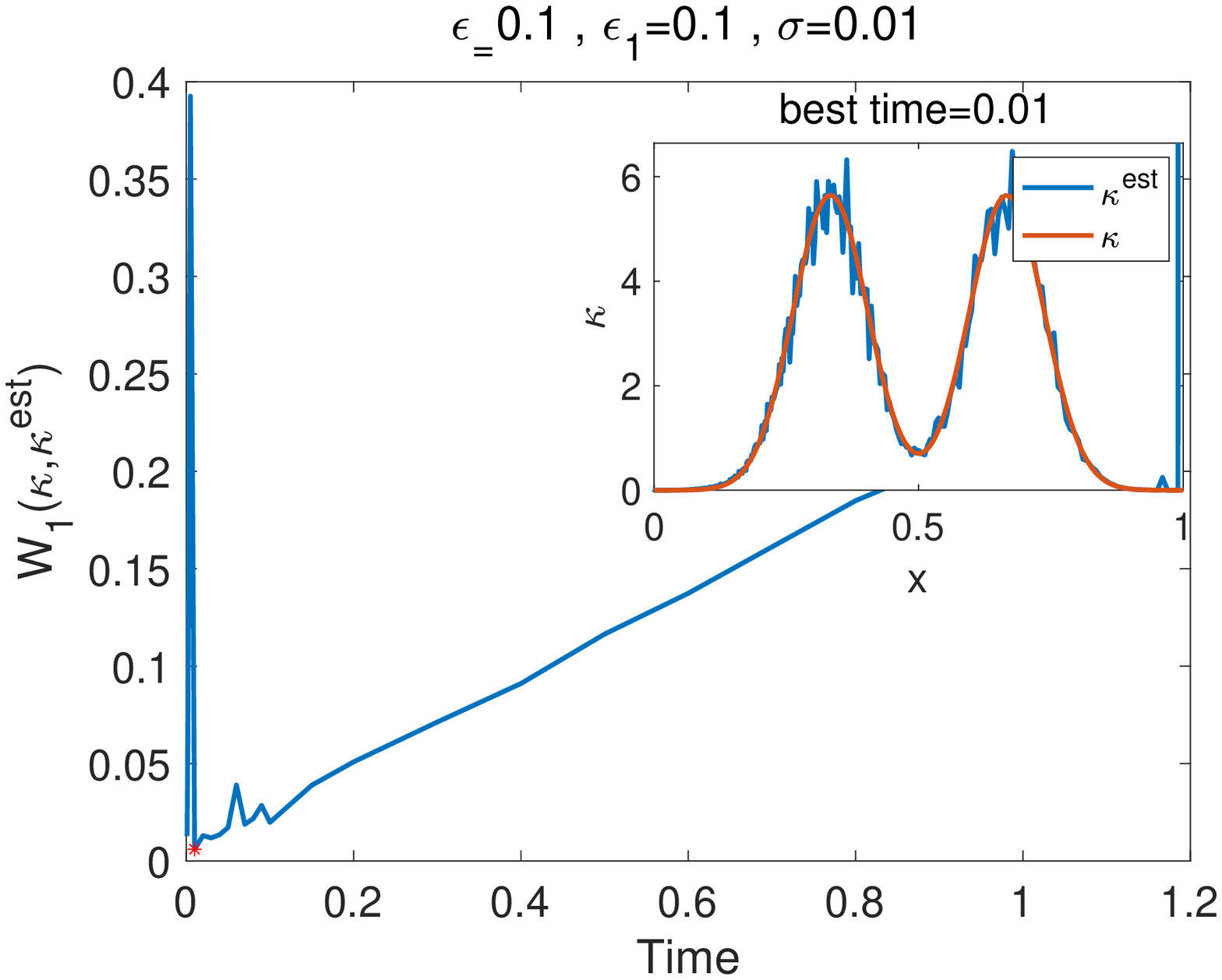}
   \includegraphics[width=0.5\textwidth]{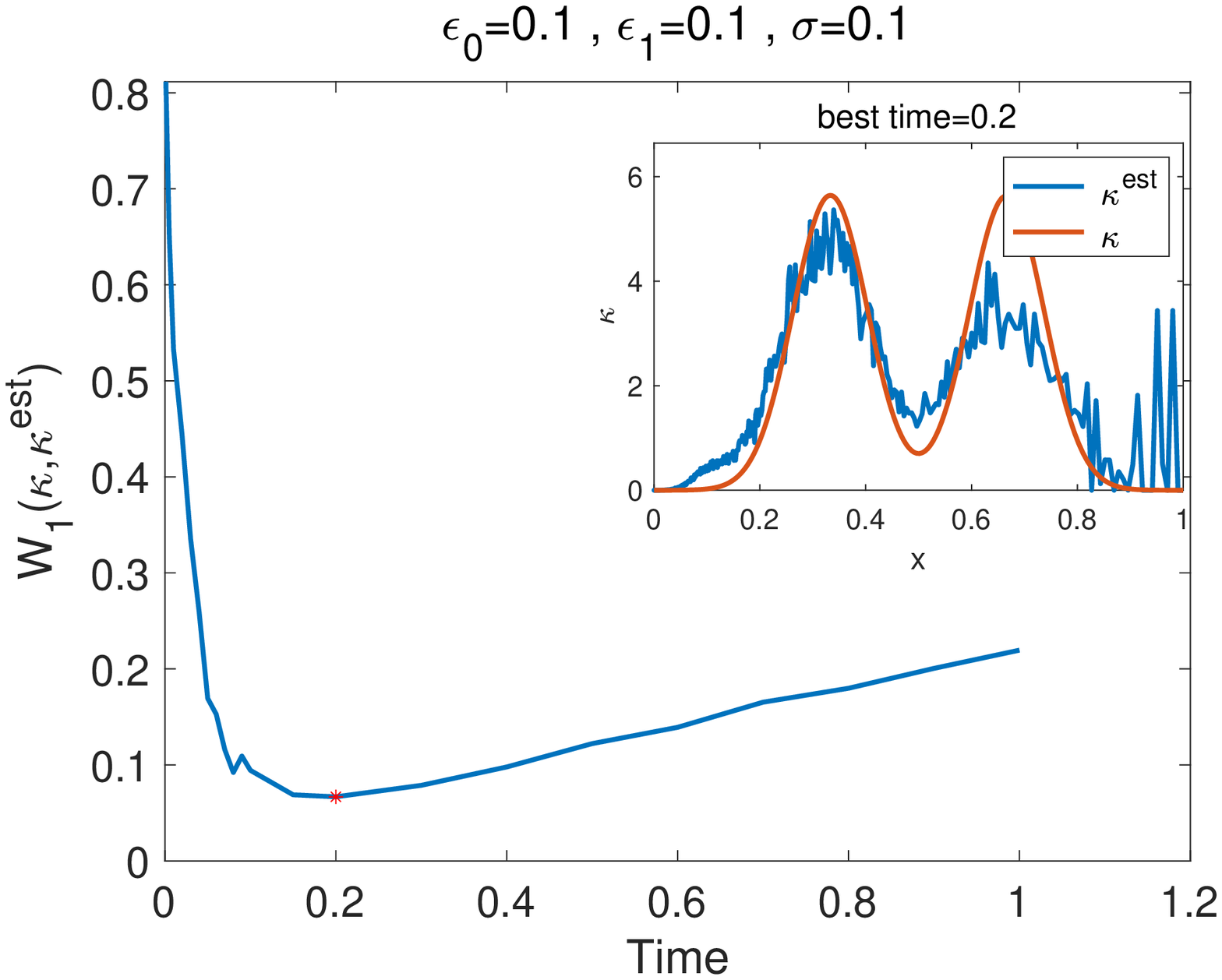}\\
 \caption{\comMD{{\bf Time evolution of the error estimate} \MT{in BL norm (approximated here by the $W_1-$ distance) between} the fragmentation kernel \MT{$\kappa$} and its estimate $\kappa^{est}(t)$, for $\epsilon_0=\epsilon_1=0.1$ and for $\MT{q}=0.01$ (Left), $\MT{q}=0.1$ (Right). The insets display the best estimate \MT{$\kappa^{est}(t_0)$}, obtained at the timepoint $t_0$ where the $W_1$ distance is minimal.}}
   \label{fig:Marie}
 \end{figure}
 \begin{figure}
\begin{center}\includegraphics[width=0.7\textwidth]{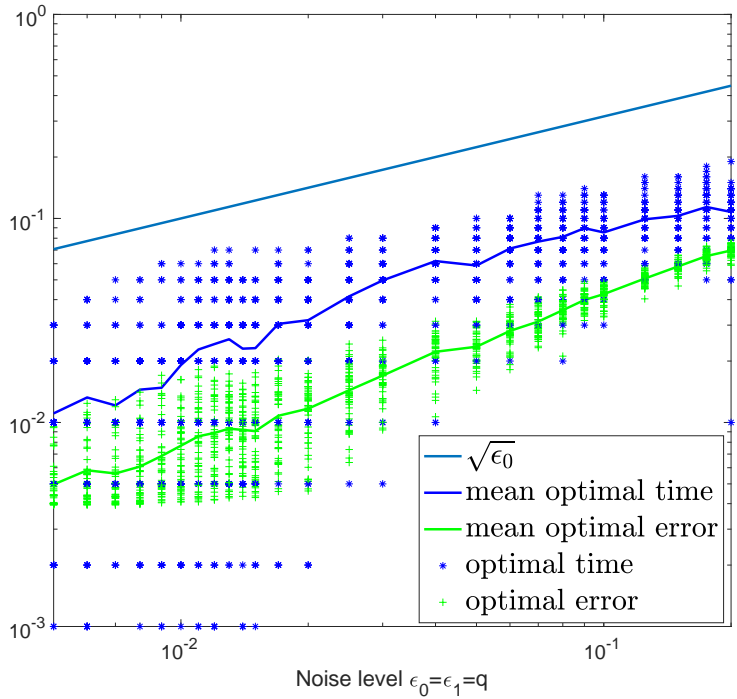} 
\end{center}
\caption{{\bf Optimal error and optimal time with respect to the noise level.} We choose an equal level for the three noise sources $\eps_0,$ $\eps_1$ and $q$ (i.e. $q=\sigma$ the standard deviation of the initial gaussian), and for 50 simulations we draw the optimal time (blue asterisks) giving the optimal error (green asterisks). In plain lines we draw the mean over the 50 simulations, to be compared with the curve $\varepsilon \mapsto \sqrt{\varepsilon}$ (lighter blue plain line).  We observe a good agreement with the expected rate of convergence.}
   \label{fig:Marie2}
   \end{figure}
  
  \begin{table}
 \begin{tabular}{|c||c|c|c|}
 \hline
 Kernel& \includegraphics[width=0.25\textwidth]{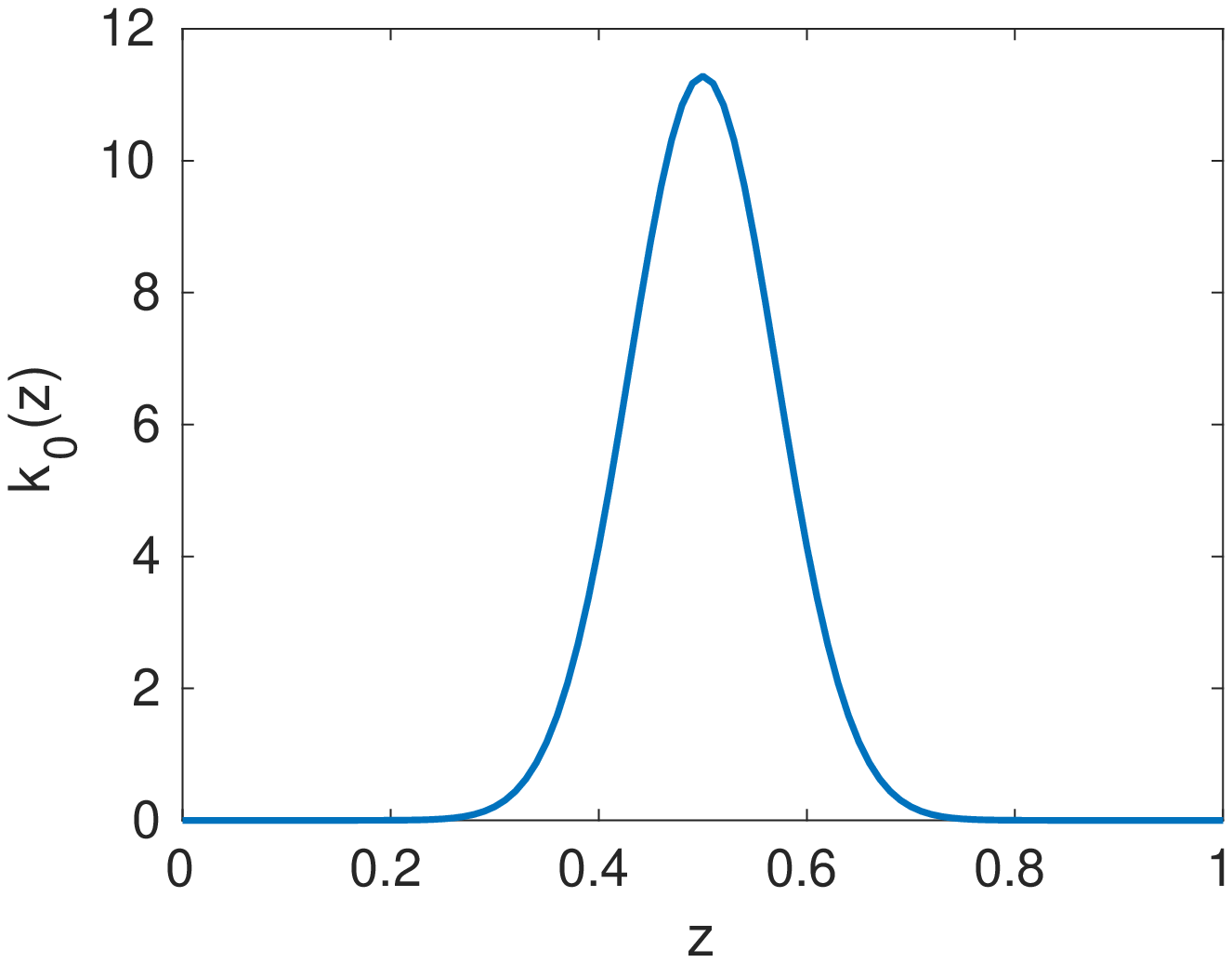} &  \includegraphics[width=0.25\textwidth]{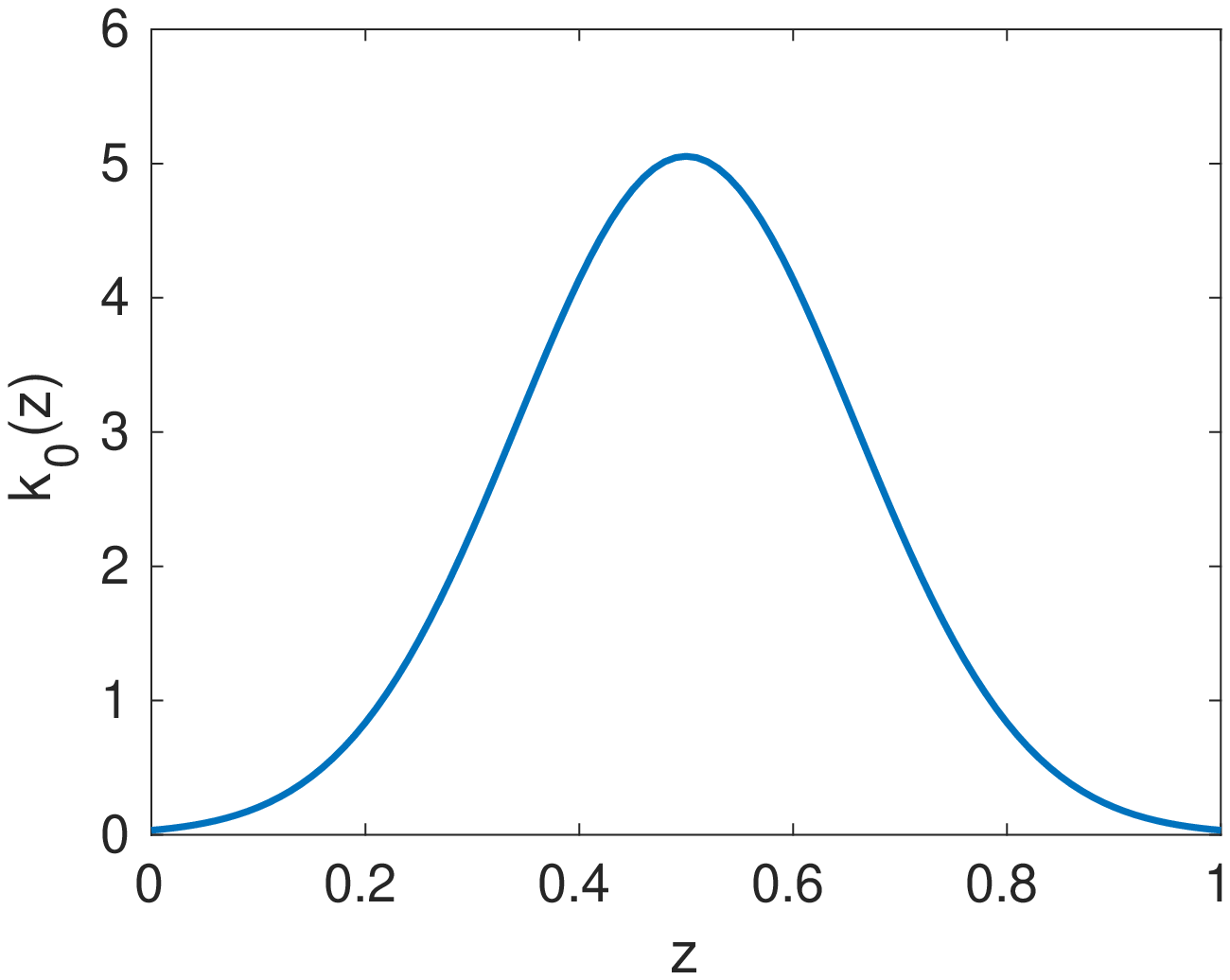} & \includegraphics[width=0.25\textwidth]{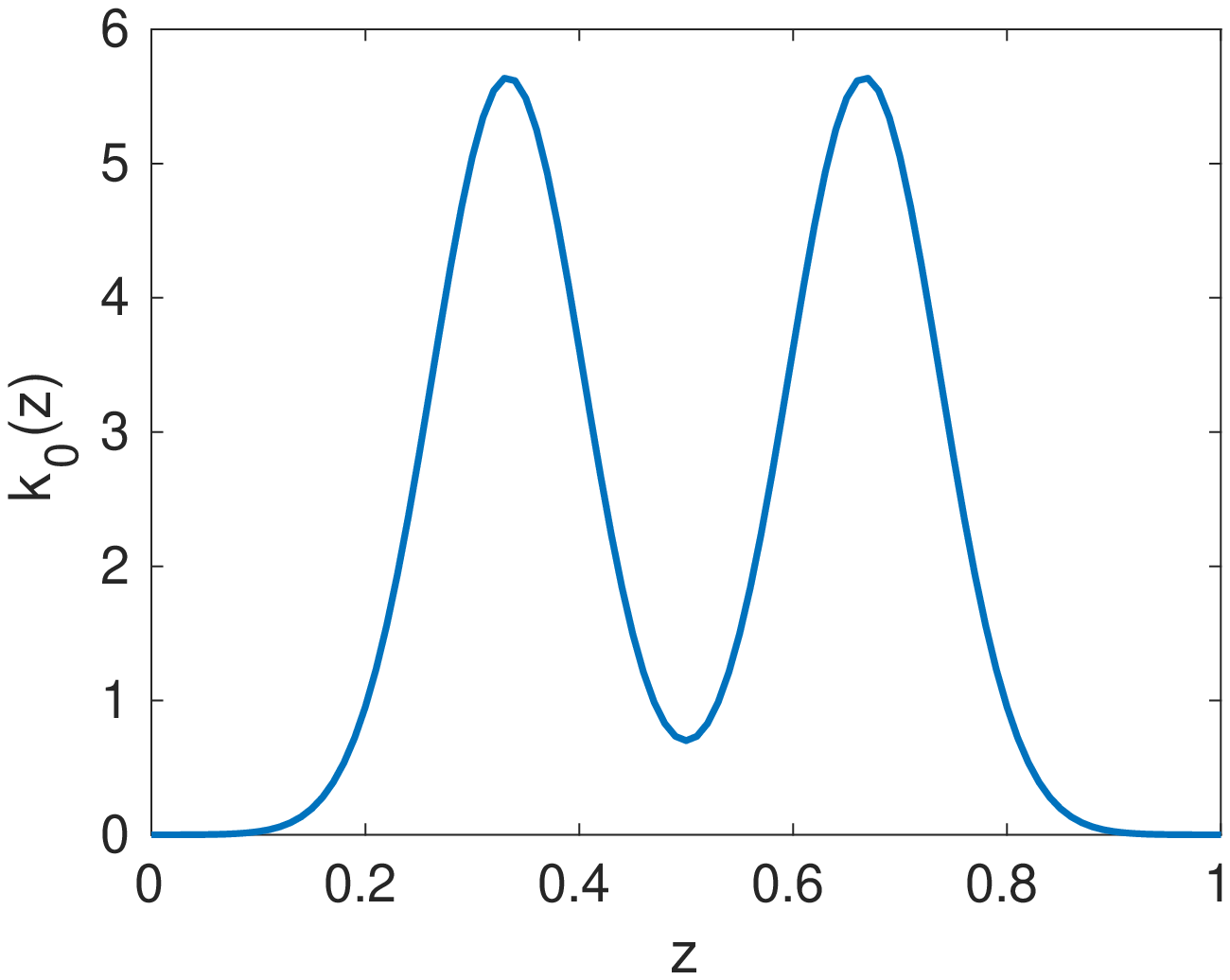}\\
\hline
Var & 0.0100 &0.0295 &0.0378 \\
\hline
SD & 0.1001  &0.1718&0.1944 \\
\hline
\hline
\hline
 Kernel &  \includegraphics[width=0.25\textwidth]{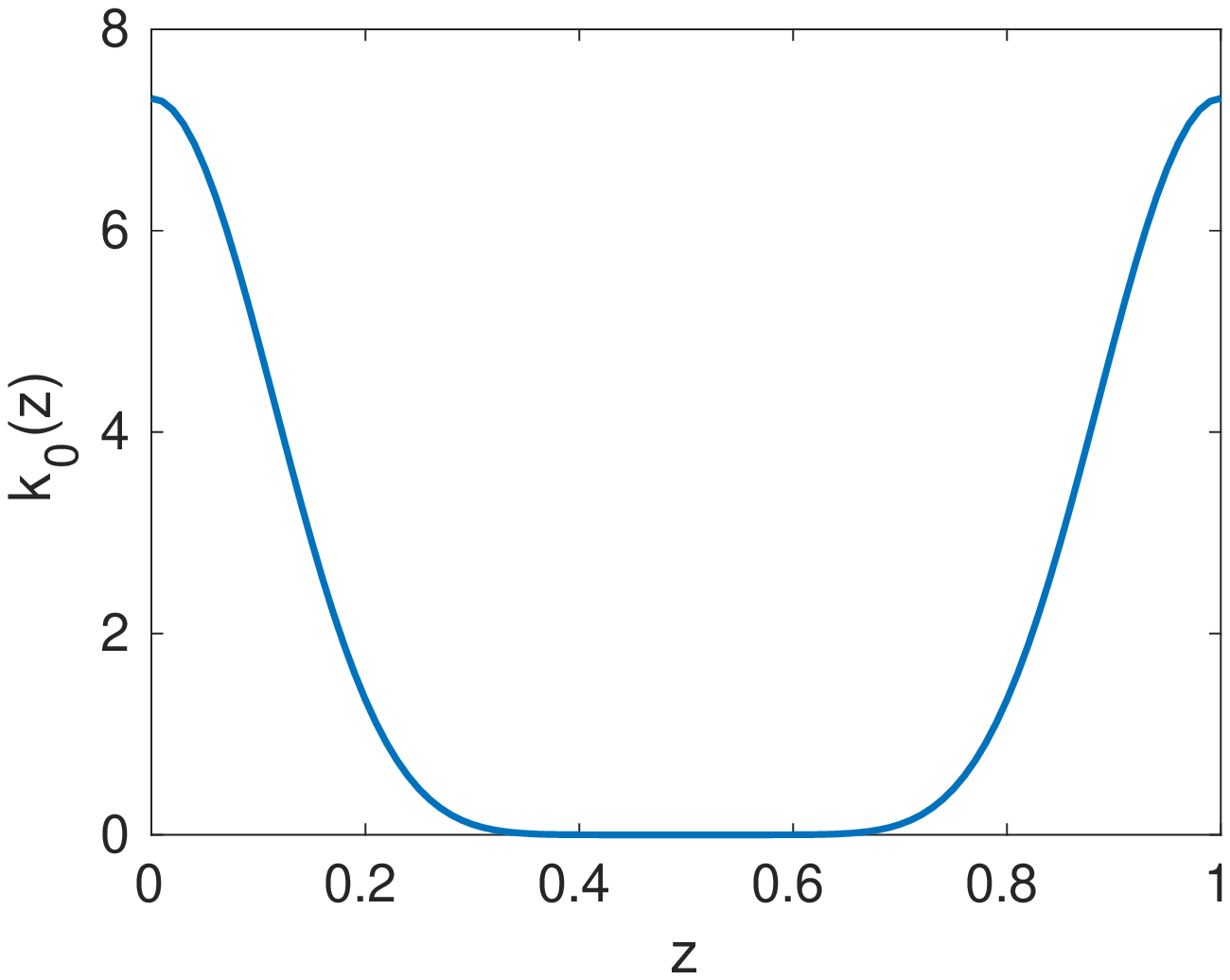} & \includegraphics[width=0.25\textwidth]{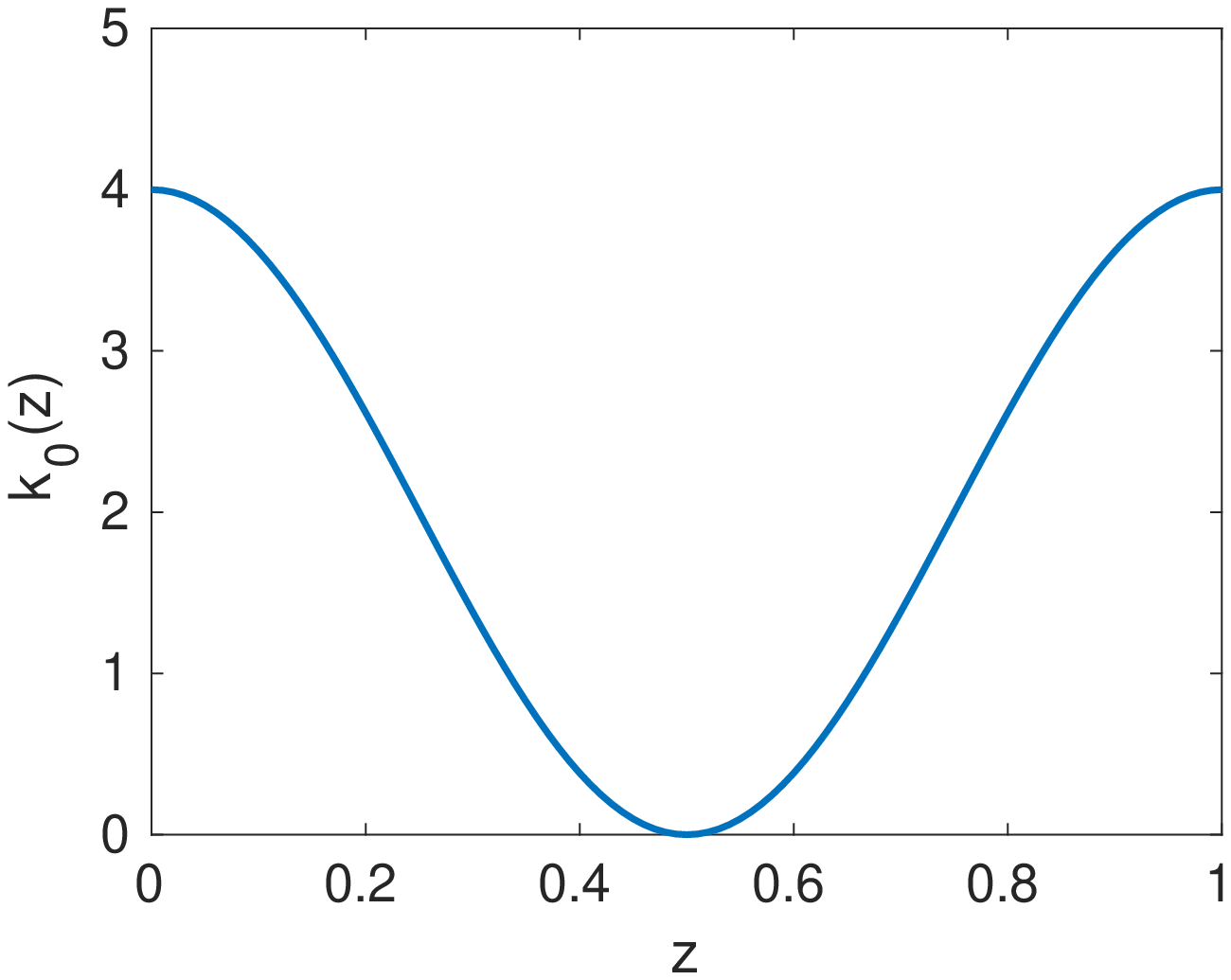}&\includegraphics[width=0.25\textwidth]{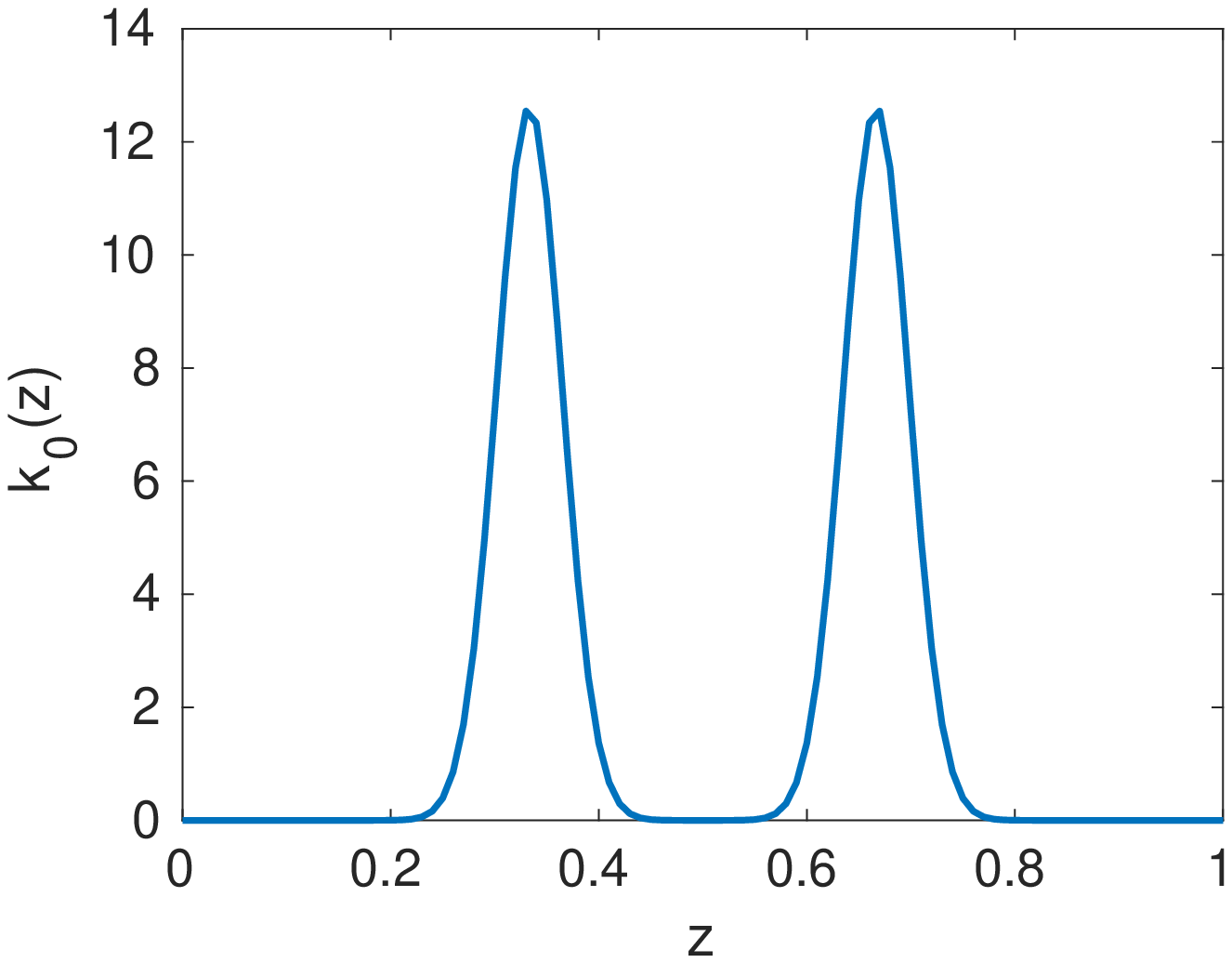}\\
\hline
 Var &0.1625  &0.1290& 0.0338    \\
 \hline
 SD& 0.4031 &0.3591 &0.1839  \\
 \hline
 \end{tabular}
 \caption{Variance and Standard Deviation for 6 typical fragmentation kernels.}
 \label{table}
 \end{table}

\begin{table}[h]
    \centering
    \hfill
   \includegraphics[width=0.6\textwidth]{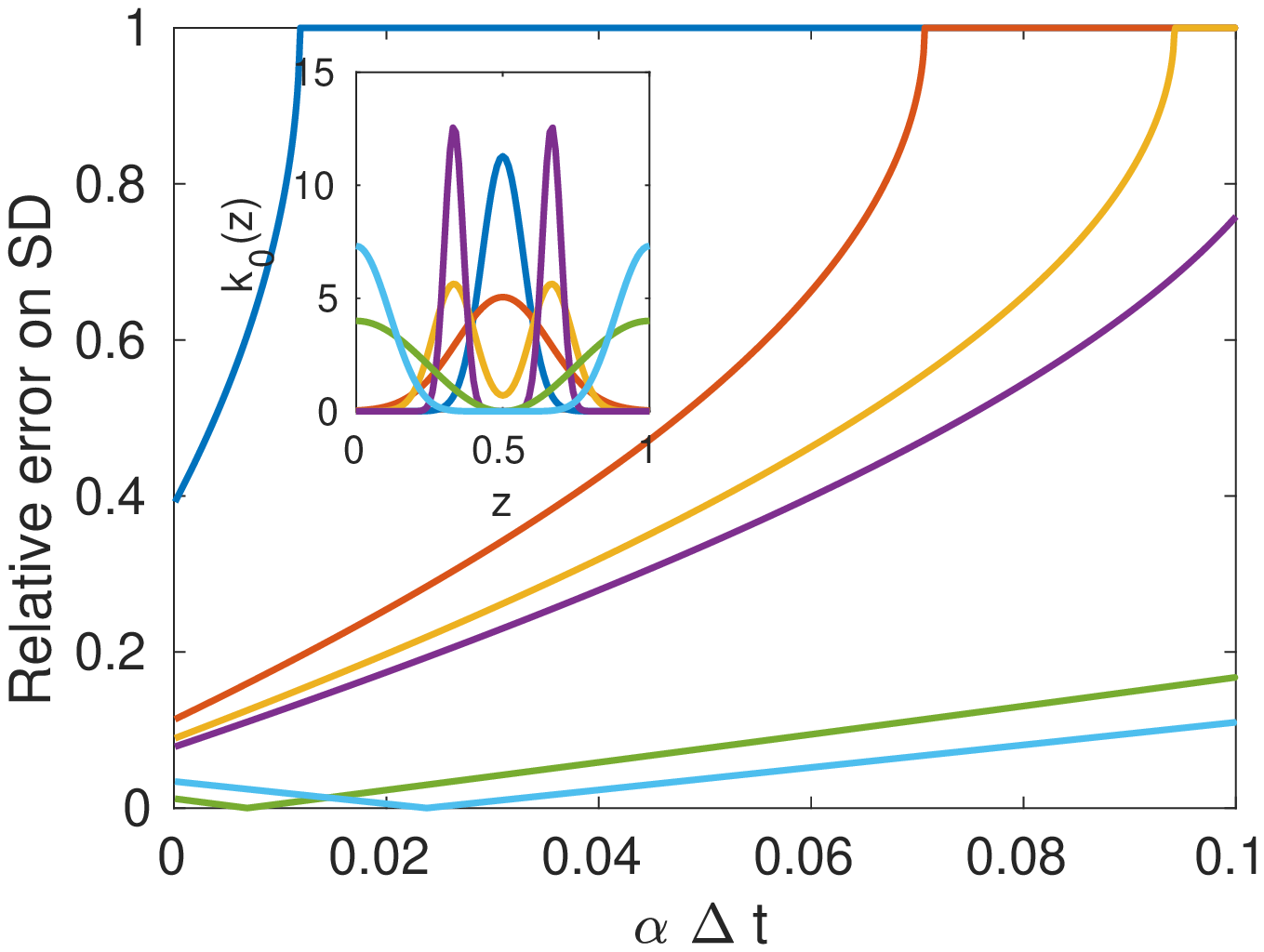} 
    \hfill
 \begin{tabular}{|c|c|||c|c|}
 \hline
 \multicolumn{2}{|c|||}{$\alpha \Delta t =0.02$}&\multicolumn{2}{|c|}{$\alpha \Delta t=0.1$}\\
  \hline
SD& $SD^{est}$  & SD & $SD^{est}$ \\
\hline
0.1001 & 0&0.1001&0 \\
0.1718 & 0.1296&0.1718&0 \\
0.1944 & 0.1574&0.1944&0 \\
0.2065 & 0.1718&0.2065& 0.0537\\
0.3591 & 0.3515&0.3591&0.2997 \\
0.4031 & 0.4060&0.4031 &0.3595\\
 \hline
 \end{tabular}
    \hfill\null
    \caption{\MT{Parameters: $\alpha=\gamma=1$. Initial condition: a thin gaussian centered at $x=2$. For the 6 kernels described in Table \ref{table} and displayed in the inset, we plot the relative error
 on the standard deviation estimate as a function of $\alpha \Delta t$. We detail two cases: $\alpha \Delta t=0.02$ and $\alpha \Delta t=0.1$ on the table (Right)}}
\label{fig:Dt}
\end{table}

\bibliographystyle{plain}
 \bibliography{bibli19}
\end{document}